\newcommand{\F}{\mathcal{F}}
\newtheorem{thm}{Theorem}[section]
\newtheorem*{thm-nl}{Theorem}
\newtheorem*{prop-nl}{Proposition}
\newtheorem{definition}[thm]{Definition}
\def\PP{{\textbf P}}
\def\OO{\mathcal{O}}
\def\F{\mathcal{F}}
\def\cM{\mathcal{M}}
\def\cR{\mathcal{R}}
\def\Pic0{{\rm Pic}^0(X)}
\def\mm{\overline{\mathcal{M}}}
\newtheorem*{cor-nl}{Corollary}
\newtheorem{conjecture}[thm]{Conjecture}
\newtheorem*{conjecture-nl}{Conjecture}
\newtheorem*{quest-nl}{Question}
\newtheorem*{quests-nl}{Questions}
\newtheorem{prop}[thm]{Proposition}
\theoremstyle{remark}
\newtheorem{remark}[thm]{Remark}
\title{{Progress on syzygies of algebraic curves}}
\author[G. Farkas]{Gavril Farkas}
\address{Humboldt-Universit\"at zu Berlin, Institut f\"ur Mathematik,  Unter den Linden 6
\hfill \newline\texttt{}
 \indent 10099 Berlin, Germany} \email{{\tt farkas@math.hu-berlin.de}}
\begin{document}

\begin{abstract}
These notes discuss recent advances on syzygies on algebraic curves, especially concerning the Green, the Prym-Green and the Green-Lazarsfeld Secant Conjectures.
The methods used are largely geometric and variational, with  a special emphasis on examples and explicit calculations. The notes are based on series of lectures
given in Daejeon (March 2013), Rome (November-December 2015) and Guanajuato (February 2016).
\end{abstract}

\maketitle
\section{Introduction}

The terms \emph{syzygy} was originally used in astronomy and refers to three celestial bodies (for instance Earth-Sun-Moon) lying on a straight line. In mathematics, the term was introduced in 1850 by
Sylvester, one of the greatest mathematical innovators and neologists \footnote{Sylvester is responsible for a remarkable number of standard mathematical terms like matrix, discriminant,
minor, Jacobian, Hessian, invariant, covariant and many others. Some of his other terms have not stuck, for instance his  \emph{derogatory matrices}, that is, matrices whose characteristic polynomial
differs from the minimal polynomial are all but forgotten today.} of all times. For Sylvester a syzygy is a linear relation between certain functions with arbitrary functional coefficients,
which he called syzygetic multipliers. We quote from Sylvester's original paper in the Cambridge and Dublin Journal of Mathematics \textbf{5} (1850), page 276: \emph{The members of any
group of functions, more than two in number, whose nullity is implied in the relation of double contact, whether such group form a complete system or not, must be
in syzygy}.

\vskip 3pt

Most of the original applications, due to Cayley, Sylvester and others involved Classical Invariant Theory, where the syzygies in question were algebraic relations between
the invariants and covariants of binary forms. An illustrative example in this sense is represented by the case of a general binary cubic form $f(x,y)=ax^3+3bx^2y+3cxy^2+dy^3$.
One can show that there exists a covariant $T$ of degree $3$ and order $3$ (that is, of bidegree $(3,3)$ in the coefficients of $f$ and in the
variables $x$ and $y$ respectively), having the rather unforgiving form
$$T:=(a^2d-3abc+2b^3)x^3+3(abd+b^2c-2ac^2)x^2y-3(acd-2b^2d+bc^2)xy^2-(ad^2-3bcd+2c^3)y^3.$$
Denoting by $D$ and by $H$ the discriminant and the Hessian of the form $f$ respectively, these three covariants are related by the following \emph{syzygy} of order $6$ and degree $6$:
$$4H^3=Df^2-T^2.$$
The mathematical literature of the $19$th century is full of results and methods of finding  syzygies between invariants of binary forms, and sophisticated algorithms, often based on experience
and intuition rather than on solid proofs, have been devised.

\vskip 3pt

Sylvester pursued an ill-fated attempt to unite mathematics and poetry, which he deemed both to be guided by comparable concerns with formal relations between quantities.
In his treatise \emph{Laws of verse}, he
even introduced the concept of \emph{phonetic syzygy} as a repetition of syllables in certain rhymes and wrote poems to illustrate the principle of
poetic syzygy. Not surprisingly, his ideas and terminology in this direction have not become widespread.

\vskip 3pt

Returning to mathematics, it was Hilbert's landmark paper \cite{Hil} from 1890 that not only put an end to Classical Invariant Theory in its constructive form propagated by the German and the British schools, but also introduced syzygies as objects of pure algebra. \emph{Hilbert's Syzygy Theorem} led to a new world of free resolutions, higher syzygies and  homological algebra.
Although Hilbert's original motivation was Invariant Theory, his ideas had immediate and widespread impact, influencing the entire development of commutative algebra and algebraic geometry.

\vskip 3pt

In algebraic geometry, the first forays into syzygies of algebraic varieties came from two different directions. In Germany, Brill and M. Noether pursued a  long-standing program of bringing algebra into the realm of Riemann surfaces and thus making Riemann's work rigorous. Although the curves Brill and Noether were primarily concerned with were plane curves with ordinary singularities, they had a profound understanding of the importance of the canonical linear system of a curve and raised for the first time the question of describing canonical curves by algebraic equations. In Italy, in 1893 Castelnuovo's \cite{Ca} using purely geometric methods which he also employed in the proof of his bound on the degree of a curve in projective space,  showed that for a curve
$C\subseteq \PP^3$ of degree $d$, hypersurfaces of degree $n\geq d-2$ cut out the complete linear system $|\OO_C(n)|$. At this point, we mention the work of Petri \cite{P} (a student of Lindemann, who became a teacher but remained under the strong influence of Max and Emmy Noether). It revisits a topic that had already been considered by Enriques in 1919 and it gives a complete proof on the presentation of the generators of the ideal of a canonical curve using the algebraic methods of Brill, Noether (and Hilbert). In Petri's work, whose importance would only be recognized much later with the advent of modern Brill-Noether theory, the structure of the equations of canonical curves come to the very center of investigation.

\vskip 4pt

Once the foundations of algebraic geometry had been rigorously laid out,  Serre's sheaf cohomology had been developed  and people could return to the central problems of the subject, the idea of using homological algebra in order to study systematically the geometry of projective varieties can be traced back at least to Grothendieck and Mumford. In 1966 Mumford introduced a fundamental homological invariant, the \emph{Castelunovo-Mumford regularity}, in order to describe qualitatively the equations of an algebraic variety. He gave a fundamental bound for this invariant in terms of the degree and recovered in this way Castelnuovo's classical result \cite{Ca} for curves.  Grothendieck's construction \cite{Gr} of Hilbert schemes parametrizing all subschemes $X\subseteq \PP^n$ having a fixed Hilbert polynomial relies on the possibility of effectively bounding the degree of all equations of a variety with fixed Hilbert polynomial.
Syzygies per se however became mainstream in algebraic geometry
only after  Green \cite{G} introduced Koszul cohomology and repackaged in ways appealing
to algebraic geometers all the information contained in the minimal free resolution of the coordinate ring of an algebraic variety. Striking new relationships between
free resolutions on one side and moduli spaces on the other have been found. For instance, matrix factorizations discovered in algebraic context by Eisenbud turned out to fit
into the framework of $A_{\infty}$-algebras, and as such had recent important applications to mirror symmetry and enumerative geometry.

\section{Syzygies of graded modules over polynomial algebras}

We fix the polynomial ring $S:=\mathbb C[x_0, \ldots, x_r]$ in $r+1$ variables and let $M=\oplus_{d\geq 0} M_d$ be a finitely generated graded $S$-module. Choose a minimal set of
(homogeneous) generators $(m_1, \ldots, m_t)$ of $M$, where $m_i$ is an element of $M$ of degree $a_i$ for $i=1, \ldots, t$. We denote by $K_1$ the module of relations between the
elements $m_i$, that is, defined by the exact sequence
$$0\longleftarrow M\stackrel{(m_i)}\longleftarrow \bigoplus_{i=1}^t S(-a_i)\longleftarrow K_1\longleftarrow 0.$$
The module  $K_1$ is a submodule of a finitely generated module, thus by the Hilbert Basis Theorem, it is a finitely generated graded $S$-module itself. Its degree $d$ piece consists of
$t$-tuples of homogeneous polynomials $(f_1, \ldots, f_t)$,
where $\mbox{deg}(f_i)=d-a_i$ such that
$$\sum_{i=1}^t f_im_i=0\in M_d.$$ Elements of $K_1$ are called first order syzygies of $M$.
Let us choose a minimal set of generators of $K_1$ consisting of relations
$(R_1, \ldots, R_s)$, where $$R_j:=(f_1^{(j)},\ldots, f_t^{(j)}), \mbox{ with } f_1^{(j)}m_1+\cdots+f_t^{(j)}m_r=0,$$
for $j=1, \ldots, s$. Here $f_i^{(j)}$ is a homogeneous polynomial of degree $b_j-a_i$ for $i=1, \ldots, t$ and $j=1, \ldots, s$.
Thus we have an induced map of \emph{free} $S$-modules
$$F_0:=\bigoplus_{i=1}^t S(-a_i)\stackrel{(f_i^{(j)})}\longleftarrow \bigoplus_{j=1}^s S(-b_j)=:F_1.$$ Then we move on and resolve $K_1$, to find a minimal set of relations
among the relations between the generators of $M$, that is, we consider the finitely generated $S$-module $K_2$ defined by the following exact sequence:
$$0\longleftarrow K_1\stackrel{(R_j)}\longleftarrow \bigoplus_{j=1}^s S(-b_j)\longleftarrow K_2\longleftarrow 0.$$
Elements of $K_2$ are relations between the relations of the generators of $M$ and as such, they are called second order syzygies of $M$. One can now continue and resolve the finitely generated
$S$-module $K_2$. The fact that this process terminates after at most $r+1$ steps is the content
of \emph{Hilbert's Syzygy Theorem}, see for instance \cite{Ei} Theorem 1.1:

\begin{thm} Every finitely generated graded $S$-module $M$ admits a minimal free graded $S$-resolution
 $$\mathbb F_{\bullet}: 0\longleftarrow M\longleftarrow F_{0}\longleftarrow F_1\longleftarrow \cdots \longleftarrow F_r\longleftarrow F_{r+1}\longleftarrow 0.$$
\end{thm}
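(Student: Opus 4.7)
The plan is to continue the iterative construction described in the text: having produced $F_0, F_1, \ldots, F_i$ together with the $i$-th syzygy module $K_i$ (finitely generated by the Hilbert Basis Theorem applied to the Noetherian ring $S$), we take a minimal homogeneous generating set of $K_i$ and let $F_{i+1}$ be the corresponding graded free $S$-module surjecting onto $K_i$. By construction, the resulting map $F_{i+1} \to F_i$ is \emph{minimal}, in the sense that its matrix entries lie in the irrelevant ideal $\mathfrak{m} := (x_0,\ldots,x_r)$; were some entry a nonzero constant, one could cancel a pair of generators and contradict the minimality of our generating set. The only real content of the theorem is therefore that this procedure terminates after at most $r+1$ steps, that is, that $K_{r+1} = 0$.

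The key tool I would invoke is the identification of the free modules $F_i$ in terms of Tor. Minimality implies that every differential of the complex $\mathbb{F}_\bullet \otimes_S \mathbb{C}$ (where $\mathbb{C} = S/\mathfrak{m}$) vanishes, so
$$F_i \otimes_S \mathbb{C} \;\cong\; \mathrm{Tor}_i^S(M, \mathbb{C})$$
as graded $\mathbb{C}$-vector spaces. It therefore suffices to prove the vanishing $\mathrm{Tor}_i^S(M,\mathbb{C}) = 0$ for all $i \geq r+2$, because then $F_i/\mathfrak{m}F_i = 0$, and a finitely generated graded free $S$-module with this property must vanish by a graded Nakayama argument.

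To compute these Tor groups I would use a second, completely explicit free resolution of $\mathbb{C}$, namely the Koszul complex on the sequence $x_0,\ldots,x_r$:
$$0 \longleftarrow \mathbb{C} \longleftarrow S \longleftarrow S(-1)^{\oplus\binom{r+1}{1}} \longleftarrow S(-2)^{\oplus\binom{r+1}{2}} \longleftarrow \cdots \longleftarrow S(-r-1) \longleftarrow 0,$$
which has length exactly $r+1$. Tensoring with $M$ and computing homology then gives $\mathrm{Tor}_i^S(M,\mathbb{C}) = 0$ for every $i > r+1$, which is exactly what we need.

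The main obstacle, and the substantive geometric input, is the exactness of the Koszul complex, equivalent to the statement that $x_0,\ldots,x_r$ is a regular sequence in $S$. I would verify this by induction on $r$: the base case $r=0$ is the statement that $x_0$ is a non-zero-divisor on $\mathbb{C}[x_0]$, and the inductive step uses that $x_r$ acts as a non-zero-divisor on $\mathbb{C}[x_0,\ldots,x_{r-1}][x_r]$ to splice the Koszul complex on $x_0,\ldots,x_r$ from two copies of the Koszul complex on $x_0,\ldots,x_{r-1}$. Once exactness of the Koszul complex and the Tor-identification above are in hand, the theorem follows immediately.
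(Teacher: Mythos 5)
Your proposal is correct, and it is essentially the argument the paper has in mind: the text around the statement already sets up the iterative construction of the syzygy modules $K_i$, the identification $b_{p,q}(M)=\dim_{\mathbb C}\mathrm{Tor}^p(M,\mathbb C)_{p+q}$ via minimality, the symmetry of $\mathrm{Tor}$ in its two arguments, and the Koszul complex as the length-$(r+1)$ free resolution of $\mathbb C$ (Theorem \ref{koszulcomplex}), which are exactly the ingredients you assemble; the paper itself defers the formal proof to \cite{Ei}, Theorem 1.1. Your write-up fills in that reference correctly, including the graded Nakayama step and the regular-sequence argument for exactness of the Koszul complex.
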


The minimal free resolution $\mathbb F_{\bullet}$ is uniquely determined up to an isomorphism of complexes of free $S$-modules. In particular, each two resolutions have the same length.
Every individual piece $F_p$ of the resolution is uniquely determined, as a graded module, by its numbers of generators and their degrees and  can be written as
$$F_p:=\bigoplus_{q>0} S(-p-q)^{\oplus b_{pq}(M)}.$$
The quantities $b_{p,q}(M)$ have an intrinsic meaning and depend only on $M$. In fact, by the very definition of the $\mbox{Tor}$ functor, one has
$$b_{p,q}(M)=\mbox{dim}_{\mathbb C} \mbox{Tor}^p(M,\mathbb C)_{p+q}, $$
see also \cite{Ei} Proposition 1.7. There is a convenient way of packaging together the numerical information contained in the resolution
$\mathbb F_{\bullet}$, which due to the computer algebra software system \emph{Macaulay}, has become widespread:

\begin{definition} The graded Betti diagram of the $S$-module $M$ is obtained by placing in column $p$ and row $q$ the Betti number $b_{p,q}(M)$.
\end{definition}

Thus the column $p$ of the Betti diagram encoded the number of  $p$-th syzygies of $M$ of various weights. Since it is customary to write the columns of a table from left
to right, it is for this reason that the rows in the resolution $\mathbb F_{\bullet}$ go from right to left, which requires some getting used to.

\vskip 4pt

A much coarser invariant of the $S$-module $M$ than the Betti diagram is its \emph{Hilbert function} $h_M:\mathbb Z\rightarrow \mathbb Z$, given by $h_M(d):=\mbox{dim}_{\mathbb C}(M_d)$.
The Betti diagram determines the Hilbert function of $M$ via the following formula:
$$h_M(d)=\sum_{p\geq 0} (-1)^p\mbox{dim}_{\mathbb C} F_p(d)=\sum_{p\geq 0,q>0} (-1)^p b_{p,q}(M) {d+r-p-q\choose r}.$$
Conversely, the Hilbert function of $M$ determines the alternating sum of Betti numbers on each diagonal of the Betti diagram. For fixed $k\geq 0$, we denote by
$B_k:=\sum_p (-1)^p b_{p,k-p}(M)$ the corresponding alternating sum of Betti numbers in one of the diagonals of the Betti diagram of $M$. The quantities $B_k$ can then
be determined inductively from the Hilbert function, using the formula:
$$B_k=h_M(k)-\sum_{\ell<k} B_{\ell}{r+k-\ell\choose r}.$$
In algebro-geometric applications, the alternating sum of Betti numbers on diagonals correspond to the geometric constraints of the problem at hand. The central question in
syzygy theory is thus to determine the possible Betti diagrams corresponding to a given Hilbert function.

\vskip 4pt

In order to explicitly compute the Betti numbers $b_{p,q}(M)$ it is useful to remember that the $\mbox{Tor}$ functor is symmetric in its two variables. In particular,  there exists a canonical
isomorphisms $\mbox{Tor}^p(M,\mathbb C)\cong \mbox{Tor}^p(\mathbb C, M)$. To compute the last $\mbox{Tor}$ group one is thus led to take an explicit  resolution of the $S$-module $\mathbb C$
by free graded $S$-modules. This is given by the \emph{Koszul complex}. We denote by $V:=S_1=\mathbb C[x_0, \ldots, x_r]_1$ the vector space of linear polynomials in $r+1$ variables.

\begin{thm}\label{koszulcomplex}
 The minimal free $S$-resolution of the module $\mathbb C$ is computed by the Koszul complex in $r+1$ variables:
 $$0\rightarrow \bigwedge^{r+1} V\otimes S(-r-1)\rightarrow \cdots \rightarrow \bigwedge^p V\otimes S(-p)\rightarrow \bigwedge^{p-1}V\otimes S(-p+1)\rightarrow \cdots
 \rightarrow V\otimes S(-1)\rightarrow \mathbb C\rightarrow 0.$$
The $p$-th differential in degree $p+q$ of this complex, denoted by $d_{p,q}:\bigwedge^p V\otimes S_q\rightarrow \bigwedge^{p-1} V\otimes S_{q+1}$, is
 given by the following formula
 $$d_{p,q}(f_1\wedge \ldots \wedge f_p\otimes u)=\sum_{\ell=1}^p (-1)^{\ell} f_1\wedge \ldots \wedge \hat{f_{\ell}}\wedge \ldots \ \wedge f_p\otimes (uf_{\ell}),$$
where $f_1, \ldots, f_p\in V$ and $u\in S_q$.
\end{thm}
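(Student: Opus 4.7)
The plan is to verify three things: (i) that the prescribed differentials make the sequence into a complex, (ii) that it is exact except at the rightmost position where the cokernel is $\mathbb{C}$, and (iii) that the resolution is minimal. Establishing (i) and (iii) is essentially formal; the real content lies in (ii).

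First I would verify $d^2 = 0$ by a direct bookkeeping argument. Applying the formula twice to $f_1 \wedge \cdots \wedge f_p \otimes u$ produces a double sum indexed by pairs $1 \le k < \ell \le p$, and after tracking signs each pair $(k,\ell)$ contributes two terms with opposite signs that cancel, a standard Koszul cancellation. Minimality of the resolution (in the sense of \cite{Ei}) follows immediately once exactness is known: by definition, the matrix representing $d_{p,q}$ with respect to the obvious bases has entries in $V \subseteq S$, hence in the maximal ideal $\mathfrak{m}=(x_0,\ldots,x_r)$, so $d_{p,q}\bigl(\bigwedge^p V \otimes S_q\bigr) \subseteq \mathfrak{m} \cdot \bigl(\bigwedge^{p-1} V \otimes S_{q+1}\bigr)$, which is exactly the minimality criterion. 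Note also that the map $V \otimes S(-1) \to S \to \mathbb{C}$ has image $(x_0, \ldots, x_r)$ in $S$ and hence cokernel $S/\mathfrak{m}=\mathbb{C}$, as desired.

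The central step is exactness in positive homological degree. My preferred route is to exhibit an explicit contracting homotopy. Decompose the complex according to its total internal degree $p+q$; the differential $d_{p,q}$ preserves $p+q$, so it suffices to show acyclicity in each fixed internal degree $n=p+q\ge 1$. For such $n$, define
$$h: \bigwedge^{p} V \otimes S_q \longrightarrow \bigwedge^{p+1} V \otimes S_{q-1}, \qquad h(f_1 \wedge \cdots \wedge f_p \otimes x_{i_1}\cdots x_{i_q}) = \frac{1}{n}\sum_{j=1}^{q} x_{i_j}\wedge f_1 \wedge \cdots \wedge f_p \otimes x_{i_1}\cdots \widehat{x_{i_j}}\cdots x_{i_q},$$
extended linearly. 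A direct computation, combining the $q$ terms produced by $hd$ with the $p$ terms produced by $dh$ and using that $p+q=n$, yields $dh + hd = \mathrm{id}$ on $\bigwedge^p V \otimes S_q$ whenever $n\ge 1$. Any cycle is then a boundary, giving exactness.

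The main obstacle is the sign/symmetrization bookkeeping in verifying $dh+hd=\mathrm{id}$: one must re-expand the wedge products to move $x_{i_j}$ past the $f_\ell$, and the combinatorics of interior products interacting with exterior multiplication is precisely what produces the factor $p+q$. An alternative that sidesteps this calculation is to observe that $x_0, \ldots, x_r$ form a regular sequence in $S$ and invoke the standard result that the Koszul complex on a regular sequence resolves $S/(x_0,\ldots,x_r)$; this can be proved by induction on $r$, using that the Koszul complex on $r+1$ elements is the total complex of the tensor product of the Koszul complex on $(x_0,\ldots,x_{r-1})$ with the length-one complex $0\to S(-1)\xrightarrow{x_r} S\to 0$, together with the fact that $x_r$ acts as a non-zero divisor on $S/(x_0,\ldots,x_{r-1})$. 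Either route completes the proof.
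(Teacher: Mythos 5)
Your proposal is correct. Note first that the paper offers no proof of this theorem at all: it is stated as the standard fact that the Koszul complex on the variables resolves the residue field, with the reader implicitly referred to the literature. So there is no ``paper's approach'' to compare against; what you have written is the standard textbook argument, and both of your routes work. The contracting homotopy you define is the classical Euler/polarization homotopy: it is well defined because on a monomial $x_{i_1}\cdots x_{i_q}$ it is $\tfrac{1}{n}$ times the composite of $u\mapsto \sum_i x_i\otimes \partial u/\partial x_i$ with exterior multiplication, and the identity $dh+hd=\pm\,\mathrm{id}$ in internal degree $n=p+q\geq 1$ is exactly the statement that the Euler vector field pairs with the de Rham-type operator to give multiplication by the total degree. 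One small point worth flagging: with the paper's sign convention $(-1)^{\ell}$ (rather than the more common $(-1)^{\ell-1}$) in the differential, the computation gives $dh+hd=-\mathrm{id}$ rather than $+\mathrm{id}$; this is harmless for acyclicity (replace $h$ by $-h$), but since you announce the identity with a specific sign you should either adjust $h$ or remark that an overall sign is immaterial. Your minimality argument (entries of the differential lie in $\mathfrak m$) and the identification of the cokernel at the right end with $S/\mathfrak m$ are exactly right, and correctly supply the $\bigwedge^0 V\otimes S=S$ term that the paper's display elides. The alternative via regular sequences and the tensor-product description of the Koszul complex is equally standard and also complete. No gaps.
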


In order to compute the Betti numbers of $M$, we tensor the Koszul complex with the $S$-module $M$ and take cohomology. One is thus naturally led to the definition
of \emph{Koszul cohomology} of $M$, due to Green \cite{G}. Even though Green's repackaging of the higher Tor functors amounted to little new information, the
importance of \cite{G} cannot be overstated, for it brought syzygies in the realm of mainstream algebraic geometry. For integers $p$ and $q$, one  defines the Koszul cohomology group
$K_{p,q}(M,V)$ to be the cohomology of the  complex
$$\bigwedge^{p+1} V\otimes M_{q-1}\stackrel{d_{p+1,q-1}}\longrightarrow \bigwedge^p V\otimes M_q\stackrel{d_{p,q}}\longrightarrow \bigwedge^{p-1}V\otimes M_{q+1}.$$
As already pointed out, one has
$$b_{p,q}(M)=\mbox{dim}_{\mathbb C} K_{p,q}(M,V).$$

From the definition it follows that Koszul cohomology is functorial. If $f:A\rightarrow B$ is a morphism of graded $S$-modules, one has an induced morphism
$$f_*:K_{p,q}(A,V)\rightarrow K_{p,q}(B,V)$$
of Koszul cohomology groups. More generally, if
$$0\longrightarrow A\longrightarrow B\longrightarrow C\longrightarrow 0$$
is a short exact sequence of graded $S$-modules, one has an associated long exact sequence in Koszul cohomology:
\begin{equation}\label{lonseq}
\cdots \rightarrow K_{p,q}(A,V)\rightarrow K_{p,q}(B,V)\rightarrow K_{p,q}(C,V)\rightarrow K_{p-1,q+1}(A,V)\rightarrow K_{p-1,q+1}(B,V)\rightarrow \cdots
\end{equation}

\section{Syzygies in algebraic geometry}

In algebraic geometry, one is primarily interested in resolving  (twisted) coordinate rings of projective algebraic varieties.  A very good general reference for Koszul cohomology in
algebraic geometry is the book of Aprodu and Nagel \cite{AN}.

\vskip 3pt

We begin by setting notation. Let $X$ be a projective variety, $L$ a globally generated line bundle on $X$ and $\F$ a sheaf on $X$. Set $r=r(L)=h^0(X,L)-1$ and denote by
$\varphi_L:X\rightarrow \PP^r$ the morphism induced by the linear system
$|L|$. Although strictly speaking this is not necessary, let us assume that $L$ is very ample, therefore $\varphi_L$ s an embedding. We set
$S:=\mbox{Sym } H^0(X,L)\cong \mathbb C[x_0, \ldots, x_r]$ and form the twisted coordinate $S$-module
$$\Gamma_X(\F,L):=\bigoplus_{q} H^0(X,\F\otimes L^{\otimes q}).$$

Following notation of Green's \cite{G}, one introduces the Koszul cohomology groups
$$K_{p,q}(X,\F,L):=K_{p,q}\Bigl(\Gamma_X(\F,L),H^0(X,L)\Bigr)$$ and accordingly, one defines the Betti numbers
$$b_{p,q}(X,\F,L):=b_{p,q}\bigl(\Gamma_X(\F,L)\bigr).$$
In most geometric applications, one has $\F=\OO_X$, in which case $\Gamma_X(L):=\Gamma_X(\OO_X,L)$ is the coordinate ring of the variety $X$ under the map $\varphi_L$. One writes
$b_{p,q}(X,L):=b_{p,q}(X, \OO_X,L)$.
It turns out that the calculation of Koszul cohomology groups of line bundles can be reduced to usual cohomology of the exterior powers of a certain vector bundle on the variety $X$.

\begin{definition}
For a globally generated line bundle $L$ on $X$, we define  the \emph{Lazarsfeld vector bundle} $M_L$ via the exact sequence
$$0\longrightarrow M_{L}\longrightarrow H^0(X,L) \otimes \mathcal{O}_X \longrightarrow L\longrightarrow 0,$$ where the above map is given by evaluation of the global sections of $L$.
\end{definition}
One also denotes by $Q_L:=M_L^{\vee}$ the dual of the Lazarsfeld bundle. Note that we have a canonical injection $H^0(X,L)^{\vee}\hookrightarrow H^0(X,Q_L)$ obtained by dualizing the defining
sequence for $M_L$.
To make the role of Lazarsfeld bundles more transparent, we recall the description of the tangent bundle of the projective space $\PP^r$ provided by the \emph{Euler sequence} (see \cite{Ha} Example 8.20.1):
$$0\longrightarrow \OO_{\PP^r}\longrightarrow H^0(\PP^r,\OO_{\PP^r}(1))^{\vee}\otimes \OO_{\PP^r}(1)\longrightarrow T_{\PP^r}\longrightarrow 0,$$
By pulling-back the Euler sequence via the map $\varphi_L$ and dualizing, we observe that the Lazarsfeld  bundle is a twist of the restricted cotangent bundle:
$$M_L\cong \Omega_{\PP^r}(1)\otimes \OO_X.$$ We now take  exterior powers in the exact sequence  defining $M_L$, to  obtain the following exact sequences for each $p\geq 1$:
$$0\longrightarrow \bigwedge^p M_L\longrightarrow \bigwedge^p H^0(X,L)\otimes \OO_X\longrightarrow \bigwedge^{p-1} M_L\otimes L\longrightarrow 0.$$
After tensoring and taking cohomology, we link these sequences to the Koszul complex computing $K_{p,q}(X,\F,L)$ and  obtain the following description of Koszul cohomology groups in
terms of ordinary cohomology of powers
of twisted Lazarsfeld bundles. For a complete proof we refer to \cite{AN} Proposition 2.5:

\begin{prop}\label{kernelbundles}
One has the following canonical isomorphisms:
$$K_{p,q}(X, \mathcal{F}, L)  \cong \mathrm{Coker} \Bigl\{\bigwedge^{p+1}H^0(X,L) \otimes H^0(X,\mathcal{F} \otimes L^{q-1}) \to H^0\bigl(X,\bigwedge^p M_L \otimes \mathcal{F} \otimes L^q
\bigr)\Bigr\}
$$
$$ \cong \mathrm{Ker}\Bigl\{H^1\bigl(X,\bigwedge^{p+1} M_L \otimes \mathcal{F} \otimes L^{q-1}\bigr) \to \bigwedge^{p+1}H^0(X, L) \otimes H^1(X,\mathcal{F} \otimes L^{q-1})\Bigr\}.$$
\end{prop}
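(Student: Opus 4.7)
The plan is to extract both isomorphisms from the two wedge-power short exact sequences already written down in the excerpt, namely
\[
0\to\bigwedge^p M_L\to\bigwedge^p H^0(X,L)\otimes\OO_X\to\bigwedge^{p-1}M_L\otimes L\to 0
\]
for the indices $p$ and $p+1$. I would first tensor the $p$-th such sequence by $\F\otimes L^q$ and take the long exact cohomology sequence; the piece
\[
0\to H^0\bigl(X,\bigwedge^p M_L\otimes\F\otimes L^q\bigr)\to \bigwedge^p V\otimes H^0(X,\F\otimes L^q)\xrightarrow{\delta_{p,q}} H^0\bigl(X,\bigwedge^{p-1}M_L\otimes\F\otimes L^{q+1}\bigr)
\]
gives a built-in candidate for $\ker d_{p,q}$. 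To identify $\delta_{p,q}$ with the Koszul differential, I would compose with the canonical injection $H^0\bigl(X,\bigwedge^{p-1}M_L\otimes\F\otimes L^{q+1}\bigr)\hookrightarrow \bigwedge^{p-1}V\otimes H^0(X,\F\otimes L^{q+1})$ coming from the analogous sequence one degree lower, and verify (locally, on decomposable tensors $f_1\wedge\cdots\wedge f_p\otimes u$) that the composition is exactly the alternating-sum formula for $d_{p,q}$ in Theorem~\ref{koszulcomplex}. Consequently $\ker d_{p,q}\cong H^0\bigl(X,\bigwedge^p M_L\otimes\F\otimes L^q\bigr)$.

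Next, I would tensor the $(p+1)$-st wedge-power sequence by $\F\otimes L^{q-1}$, which gives
\[
0\to \bigwedge^{p+1}M_L\otimes\F\otimes L^{q-1}\to \bigwedge^{p+1}V\otimes\F\otimes L^{q-1}\to \bigwedge^p M_L\otimes\F\otimes L^q\to 0,
\]
and read off the four-term exact sequence
\[
\bigwedge^{p+1}V\otimes H^0(X,\F\otimes L^{q-1})\xrightarrow{\alpha} H^0\bigl(X,\bigwedge^p M_L\otimes\F\otimes L^q\bigr)\to H^1\bigl(X,\bigwedge^{p+1}M_L\otimes\F\otimes L^{q-1}\bigr)\xrightarrow{\beta}\bigwedge^{p+1}V\otimes H^1(X,\F\otimes L^{q-1}).
\]
The same local check as before (composing $\alpha$ with $H^0\bigl(\bigwedge^pM_L\otimes\F\otimes L^q\bigr)\hookrightarrow \bigwedge^p V\otimes H^0(X,\F\otimes L^q)$) identifies $\alpha$ with the Koszul differential $d_{p+1,q-1}$ restricted to land in $\ker d_{p,q}$. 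Combining the two identifications,
\[
K_{p,q}(X,\F,L)=\ker d_{p,q}/\operatorname{im} d_{p+1,q-1}=\operatorname{coker}\alpha,
\]
which is the first claimed isomorphism; and by exactness of the four-term sequence, $\operatorname{coker}\alpha\cong\ker\beta$, which is the second.

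The only genuinely non-formal step is the verification that the cohomological connecting and evaluation maps arising from the wedge-power sequences agree on the nose with the Koszul differential of Theorem~\ref{koszulcomplex}, including the alternating signs. I would handle this by fixing a local trivialization of $L$ and of $\F$, representing a section $f_1\wedge\cdots\wedge f_{p+1}\otimes u$ of $\bigwedge^{p+1}V\otimes\F\otimes L^{q-1}$, and using the standard Koszul identity $f_1\wedge\cdots\wedge f_{p+1}=\sum_{\ell}(-1)^{\ell}(f_1\wedge\cdots\wedge\widehat{f_\ell}\wedge\cdots\wedge f_{p+1})\wedge f_\ell$ together with the fact that the map $\bigwedge^{p+1}V\otimes\OO_X\to\bigwedge^{p}M_L\otimes L$ is built, tautologically, by factoring out one section of $L$ at a time via the defining evaluation sequence $0\to M_L\to V\otimes\OO_X\to L\to 0$. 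This sign-bookkeeping is routine but is really the whole content of the proposition; everything else is a formal consequence of taking cohomology of two short exact sequences. I would then refer the reader to \cite{AN}, Proposition~2.5, for the detailed computation.
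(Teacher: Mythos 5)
Your proposal is correct and follows exactly the route the paper itself sketches: take exterior powers of the evaluation sequence defining $M_L$, tensor with $\mathcal F\otimes L^{q}$ (respectively $\mathcal F\otimes L^{q-1}$), identify the resulting cohomology maps with the Koszul differentials, and read off the cokernel and kernel descriptions from the long exact sequence, deferring the sign bookkeeping to \cite{AN}, Proposition 2.5. No discrepancy with the paper's argument.
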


The description of Koszul cohomology given in Proposition \ref{kernelbundles} brings syzygy theory firmly in the realm of algebraic geometry, for usual
vector bundle techniques involving stability and geometric constructions  come to the fore in order to compute Koszul cohomology groups.
We now give some examples and to keep things intutive, let us assume $\F=\OO_X$. Then
$$K_{1,1}(X,L)\cong H^0(X,M_L\otimes L)/\bigwedge^2 H^0(X,L)\cong I_2(X,L)$$
is the space of quadrics containing the image of the map $\varphi_L$. Similarly, the group
$$K_{0,2}(X,L)\cong H^0(X,L^2)/\mbox{Sym}^2 H^0(X,L)$$
measures the failure of $X$ to be quadratically normal. More generally, if
$$I_X:=\bigoplus_{q\geq 2} I_X(q)\subseteq S$$ is the graded ideal of $X$, then one has
the following isomorphism, cf. \cite{AN} Proposition 2.8:

$$K_{p,1}(X,L)\cong K_{p-1,2}(\PP^r, I_X,\OO_{\PP^r}(1)).$$
In particular, $K_{2,1}(X,L)\cong \mbox{Ker}\{I_X(2)\otimes H^0(X,\OO_X(1))\rightarrow I_X(3)\}$.

\vskip 4pt

Koszul cohomology shares many of the features of a cohomology theory. We single out one aspect which will play a role later in these lectures:

\begin{thm}\label{lef}
 Koszul cohomology satisfies the Lefschetz hyperplane principle. If $X$ is a projective variety and $L\in \mathrm{Pic}(X)$, assuming $H^1(X,L^{\otimes q})=0$ for all $q$, then for any divisor
  $D\in |L|$ one has an isomorphism $K_{p,q}(X,L)\cong K_{p,q}(D,L_{|D})$.
\end{thm}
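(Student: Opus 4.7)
The plan is to exploit the section $s\in V:=H^0(X,L)$ cutting out $D$ to produce a short exact sequence of graded $S$-modules, then analyze the resulting long exact sequence in Koszul cohomology. For each $q$, tensoring the structure sequence $0\to \OO_X(-D)\to \OO_X\to \OO_D\to 0$ with $L^{\otimes q}$ and using $\OO_X(-D)\cong L^{-1}$ yields
\[
0\longrightarrow L^{\otimes(q-1)}\stackrel{\cdot s}\longrightarrow L^{\otimes q}\longrightarrow L^{\otimes q}_{|D}\longrightarrow 0.
\]
The hypothesis $H^1(X,L^{\otimes(q-1)})=0$ makes this surjective on global sections, so assembling over $q$ gives a short exact sequence of graded $S$-modules
\[
0\longrightarrow \Gamma_X(L)(-1)\stackrel{\cdot s}\longrightarrow \Gamma_X(L)\longrightarrow \Gamma_D(L_{|D})\longrightarrow 0,
\]
where $\Gamma_D(L_{|D})$ inherits its $S$-structure through the restriction $V\twoheadrightarrow V_D:=H^0(D,L_{|D})$ (surjectivity being the case $q=1$ of the hypothesis).

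Applying the long exact sequence (\ref{lonseq}) in Koszul cohomology, the connecting maps are multiplication by $s\in V$, namely $K_{p,q-1}(X,L)\to K_{p,q}(X,L)$ and $K_{p-1,q}(X,L)\to K_{p-1,q+1}(X,L)$. The crucial observation is that multiplication by any element of $V$ acts as zero on Koszul cohomology: using the explicit formula of Theorem \ref{koszulcomplex} one computes $d(s\wedge \omega\otimes u)=-\omega\otimes(us)-s\wedge d(\omega\otimes u)$, so for any cocycle $\alpha$ one has $s\cdot \alpha=-d(s\wedge \alpha)$, a coboundary. The long exact sequence therefore collapses into short exact sequences
\[
0\longrightarrow K_{p,q}(X,L)\longrightarrow K_{p,q}\bigl(\Gamma_D(L_{|D}),V\bigr)\longrightarrow K_{p-1,q}(X,L)\longrightarrow 0.
\]

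The next step is to rewrite the middle term using only $V_D$. Since $s_{|D}=0$, the element $s$ annihilates $\Gamma_D(L_{|D})$. Choose a splitting $V=V_D\oplus \C s$, which decomposes each Koszul term as $\bigwedge^p V\otimes \Gamma_D(L_{|D})=\bigwedge^p V_D\otimes \Gamma_D(L_{|D})\;\oplus\; s\wedge \bigwedge^{p-1}V_D\otimes \Gamma_D(L_{|D})$. The same computation as above, together with the vanishing $s\cdot \Gamma_D(L_{|D})=0$, shows that the differential preserves this splitting and restricts to $\pm d_{V_D}$ on each summand, yielding
\[
K_{p,q}\bigl(\Gamma_D(L_{|D}),V\bigr)\cong K_{p,q}(D,L_{|D})\oplus K_{p-1,q}(D,L_{|D}).
\]

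Combining the last two displays produces, for every $p$ and $q$, an exact sequence
\[
0\longrightarrow K_{p,q}(X,L)\longrightarrow K_{p,q}(D,L_{|D})\oplus K_{p-1,q}(D,L_{|D})\longrightarrow K_{p-1,q}(X,L)\longrightarrow 0,
\]
from which induction on $p$ (with base case $K_{-1,\bullet}=0$, giving immediately $K_{0,q}(X,L)\cong K_{0,q}(D,L_{|D})$) yields the asserted isomorphism $K_{p,q}(X,L)\cong K_{p,q}(D,L_{|D})$. The most delicate step is the bookkeeping of the induction: one must verify that the composition $K_{p,q}(X,L)\hookrightarrow K_{p,q}(\Gamma_D(L_{|D}),V)\twoheadrightarrow K_{p,q}(D,L_{|D})$ agrees with the geometric restriction map and that the surjection in the displayed sequence projects onto the $K_{p-1,q}(D,L_{|D})$-summand compatibly with the inductive identification at level $p-1$, so that the induction closes and the resulting isomorphism is the natural one.
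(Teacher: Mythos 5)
The paper states Theorem \ref{lef} without proof, quoting it as a standard property of Koszul cohomology (it is Theorem 3.b.7 of \cite{G}; see also \cite{AN}), so there is no in-text argument to compare against. Your proof is correct, and it reproduces the standard argument from the literature: the short exact sequence of graded $S$-modules induced by the section $s$ cutting out $D$ (using $H^1(X,L^{\otimes (q-1)})=0$ for surjectivity on sections), the homotopy $s\cdot \alpha=-d(s\wedge \alpha)$ showing that multiplication by $s$ annihilates Koszul classes so the long exact sequence breaks into short ones, and the splitting $K_{p,q}\bigl(\Gamma_D(L_{|D}),V\bigr)\cong K_{p,q}(D,L_{|D})\oplus K_{p-1,q}(D,L_{|D})$ coming from $V=V_D\oplus \mathbb C s$ together with $s\cdot \Gamma_D(L_{|D})=0$ are precisely the ingredients of that proof. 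One small simplification is available at the end: since all the groups involved are finite dimensional (each $K_{p,q}$ involves only finitely many finite-dimensional graded pieces), the exact sequence
$$0\longrightarrow K_{p,q}(X,L)\longrightarrow K_{p,q}(D,L_{|D})\oplus K_{p-1,q}(D,L_{|D})\longrightarrow K_{p-1,q}(X,L)\longrightarrow 0$$
together with the inductive hypothesis $\dim K_{p-1,q}(X,L)=\dim K_{p-1,q}(D,L_{|D})$ already forces $\dim K_{p,q}(X,L)=\dim K_{p,q}(D,L_{|D})$, which is all the theorem asserts; the compatibility bookkeeping you flag at the end is needed only if one insists that the isomorphism be the one induced by restriction, not for the statement itself.
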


In general, it is not easy to determine the syzygies of any variety by direct methods, using just the definition. One of the few instances when this is possible is given by the
twisted cubic curve $R\subseteq \PP^3$. Denoting the coordinates in $\PP^3$ by $x_0, x_1, x_2$ and $x_3$, the ideal of $R$ is given by the $2\times 2$-minors of the
following matrix:
$$\begin{pmatrix}
x_0 & x_1& x_2\\
x_1 & x_2 & x_3\\
\end{pmatrix}
$$
Therefore the ideal of $R$ is generated by three quadratic equations
$$q_1:=x_0x_2-x_1^2, \ q_2:=x_0x_3-x_1x_2 \ \mbox{ and } \ q_3:=x_1x_3-x_2^2,$$
among which there exists two linear syzygies, that is, syzygies with linear coefficients:
$$R_1:=x_0 q_3-x_1q_2+x_2 q_1 \ \mbox{ and } \ R_2:=x_1q_3-x_2q_2+x_3q_1.$$
The resolution of the twisted cubic curve is therefore the following:
$$0\longleftarrow \Gamma_R\bigl(\OO_R(1)\bigr)\longleftarrow S\longleftarrow S(-2)^{\oplus 3}\longleftarrow S(-3)^{\oplus 2}\longleftarrow 0,$$ and the corresponding Betti diagram is
the following (the entries left open being zero):

\begin{table}[htp!]
\begin{center}
\begin{tabular}{|c|c|c|}
\hline
$1$ &  &  \\
 & $3$ & $2$  \\
\hline
\end{tabular}
\end{center}
\end{table}

In order to distinguish the Betti diagrams having the simplest possible resolution for a number of steps, we recall the definition due to Green and Lazarsfeld \cite{GL1}:

\begin{definition}
 One says that a polarized variety $(X,L)$ satisfies property $(N_p)$ if it is projectively normal and $b_{j,q}(X,L)=0$ for $j\leq p $ and $q\geq 2$.
\end{definition}

In other words, a variety $\varphi_L:X\hookrightarrow \PP^r$ has property $(N_1)$ if it is projectively normal and its ideal $I_{X/\PP^r}$ is generated by quadrics $q_1, \ldots, q_s$. The number of these
quadrics is equal to $s:=b_{1,1}(X,L)={r+2\choose2}-h^0(X,L^{2})$ and is thus determined by the numerical characters of $X$. We say that $(X,L)$ has property $(N_2)$ if all of the above hold
and, in addition, all the syzygies between these quadrics are generated by linear syzygies of the type
$\ell_1 q_1+\cdots+\ell_{s} q_{s}=0$, where the $\ell_i$ are \emph{linear forms}.

\vskip 4pt

Castelnuovo, using what came to be referred to as the \emph{Base Point Free Pencil Trick}, has proven  that if $L$ is a line bundle of degree $\mbox{deg}(L)\geq 2g+2$ on a smooth curve $C$ of genus $g$,
then the curve embedded by the complete linear system $|L|$ is projectively normal and its ideal is generated by quadrics. In other words, using modern terminology, it verifies property $(N_1)$. This fact has been generalized by Green
\cite{G} to include the case of higher syzygies as well. This result illustrates a general philosophy that at least for curves, the more positive a line bundle is, the simpler its syzygies are
up to an order that is linear in the degree of the line bundle.

\begin{thm}\label{greeenvanish} Let $L$ be a line bundle of degree $d\geq 2g+p+1$ on a smooth curve $C$ of genus $g$. Then $C$ verifies property $(N_p)$.
\end{thm}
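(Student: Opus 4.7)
The plan is to translate the statement into a cohomological vanishing via Proposition \ref{kernelbundles}, and then argue by induction on $p$. Since $d = \deg(L) \geq 2g+p+1 \geq 2g+1$, the bundle $L$ is non-special and $H^1(C, L^{q-1}) = 0$ for every $q \geq 2$, so Proposition \ref{kernelbundles} collapses to
$$K_{j,q}(C, L) \cong H^1\bigl(C,\ \bigwedge^{j+1} M_L \otimes L^{q-1}\bigr),$$
and property $(N_p)$ becomes the assertion that $H^1(C, \bigwedge^{j+1} M_L \otimes L^{q-1}) = 0$ for all $0 \leq j \leq p$ and $q \geq 2$.

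The base case $p = 0$ is Castelnuovo's projective normality theorem for $\deg L \geq 2g+1$, obtained via the Base Point Free Pencil Trick recalled just before the theorem: choosing a base-point-free pencil $W \subseteq H^0(L)$ and tensoring the exact sequence $0 \to L^{-1} \to W \otimes \OO_C \to L \to 0$ with $L^{q-1}$, one deduces the surjectivity of $H^0(L) \otimes H^0(L^{q-1}) \to H^0(L^q)$ for $q \geq 2$, which via the defining sequence of $M_L$ is equivalent to $H^1(C, M_L \otimes L^{q-1}) = 0$.

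For the inductive step $p \geq 1$, I fix a general point $x \in C$ and relate $M_L$ to $M_{L(-x)}$. The snake lemma applied to the natural morphism between the defining sequences for $L(-x)$ and $L$ yields
$$0 \longrightarrow M_{L(-x)} \longrightarrow M_L \longrightarrow \OO_C(-x) \longrightarrow 0,$$
and taking $(j+1)$-th exterior powers (which is legitimate since $\OO_C(-x)$ has rank one) produces
$$0 \longrightarrow \bigwedge^{j+1} M_{L(-x)} \longrightarrow \bigwedge^{j+1} M_L \longrightarrow \bigwedge^{j} M_{L(-x)} \otimes \OO_C(-x) \longrightarrow 0.$$
Tensoring with $L^{q-1}$ and passing to the long exact cohomology sequence, the target group $H^1(C, \bigwedge^{j+1} M_L \otimes L^{q-1})$ is sandwiched between two $H^1$-groups of wedge powers of $M_{L(-x)}$, which one aims to kill via the inductive hypothesis applied to $L(-x)$, whose degree $d-1 \geq 2g+p$ entitles it to property $(N_{p-1})$.

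The main obstacle is making this inductive step go through in the sharp case $d = 2g+p+1$. For $j \leq p-1$ the inductive hypothesis applies once one absorbs the discrepancy between the twists $L^{q-1}$ and $L(-x)^{q-1}$ by a Castelnuovo--Mumford-type monotonicity argument for the positive twist $\OO_C(x)^{q-1}$. For $j = p$, however, one encounters $H^1(C, \bigwedge^{p+1} M_{L(-x)} \otimes L^{q-1})$, whose exterior degree $p+1$ is strictly beyond the range covered by $(N_{p-1})$. To close this boundary case one can follow Green's original device of iterating the Base Point Free Pencil Trick directly inside $H^0(L)$, using $0 \to L^{-1} \to W \otimes \OO_C \to L \to 0$ to move between $K_{p,q}(C,L)$ and $K_{p-1,q+1}(C,L)$, and reducing after $p$ such steps to the projective normality base case already established.
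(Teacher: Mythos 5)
Your reduction via Proposition \ref{kernelbundles} and your base case $p=0$ are fine, and the one-point sequence $0\to M_{L(-x)}\to M_L\to \OO_C(-x)\to 0$ together with its exterior powers is correct. But the proof does not close: as you yourself observe, the boundary term $H^1\bigl(C,\bigwedge^{p+1}M_{L(-x)}\otimes L^{q-1}\bigr)$ lies outside the range covered by property $(N_{p-1})$ for $L(-x)$, and the device you invoke to handle it is not a valid general principle. Passing from $K_{p,q}(C,L)$ to $K_{p-1,q+1}(C,L)$ and concluding backwards would require the implication $K_{p-1,q+1}(C,L)=0\Rightarrow K_{p,q}(C,L)=0$, i.e.\ that vanishing propagates up and to the left along diagonals of the Betti table. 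This is false in general: for a trigonal canonical curve one has $K_{0,3}(C,\omega_C)=0$ while $K_{1,2}(C,\omega_C)\neq 0$. Any argument closing this case must inject the positivity hypothesis $d\geq 2g+p+1$ precisely here, and your sketch never does so; the induction therefore terminates at an uncontrolled group.

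The paper avoids the problem by not peeling off one point at a time. Instead it removes $r-1$ general points at once, where $r=d-g$, so that the kernel in the resulting sequence $0\to M_{L(-p_1-\cdots-p_{r-1})}\to M_L\to \bigoplus_{i=1}^{r-1}\OO_C(-p_i)\to 0$ is the Lazarsfeld bundle of a base-point-free \emph{pencil}, hence a line bundle $L^{\vee}(p_1+\cdots+p_{r-1})$ by the Base Point Free Pencil Trick. Dualizing, taking $(p+1)$-st exterior powers and tensoring with $\omega_C\otimes L^{\vee}$ filters the relevant bundle by line bundles, and the vanishing $H^0\bigl(C,\bigwedge^{p+1}Q_L\otimes\omega_C\otimes L^{\vee}\bigr)=0$ (Serre dual of your $H^1$) reduces to two elementary degree counts: $H^0(C,\omega_C\otimes L^{\vee}(D_{p+1}))=0$ for degree reasons, and $H^0(C,\omega_C(-D_{r-1-p}))=0$ for general points, which is exactly where $d\geq 2g+p+1$ enters. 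If you want to salvage your approach, replace the single-point induction by this all-at-once filtration; the degree hypothesis is then consumed in one transparent step rather than smeared across an induction that cannot absorb it.
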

\begin{proof}
Using the description of Koszul cohomology given in Proposition \ref{kernelbundles},  we have the equivalence
 $$K_{p,2}(C,L)=0\Longleftrightarrow H^1\bigl(C, \bigwedge^{p+1}M_L\otimes L\bigr)=0.$$
Denoting by $Q_L:=M_L^{\vee}$, by Serre duality this amounts  to $H^0\bigl(C, \bigwedge^{p+1}Q_L\otimes \omega_C\otimes L^{\vee}\bigr)=0$.

\vskip 4pt

To establish this vanishing, we use a filtration on the vector bundle $M_L$ used several times by Lazarsfeld, for instance in \cite{La2} Lemma 1.4.1. We choose general points $p_1, \ldots, p_{r-1}\in C$, where $r:=r(L)=d-g$.
Then by induction on $r$ one has the following exact sequence on $C$:
$$0\longrightarrow M_{L(-p_1-\cdots-p_{r-1})}\longrightarrow M_L\longrightarrow \bigoplus_{i=1}^{r-1} \OO_C(-p_i)\longrightarrow 0.$$
Noting that $L(-p_1-\cdots-p_{r-1})$ is a pencil, by the Base Point Free Pencil Trick, one has the identification
$$M_{L(-p_1-\cdots-p_{r-1})}\cong L^{\vee}(p_1+\cdots+p_{r-1}).$$ Thus by dualizing, the exact sequence above becomes:
$$0\longrightarrow \bigoplus_{i=1}^{r-1}\OO_C(p_i)\longrightarrow Q_L\longrightarrow L(-p_1-\cdots-p_{r-1})\longrightarrow 0.$$
Taking $(p+1)$-st exterior powers in this sequence and tensoring with $\omega_C\otimes L^{\vee}$, we obtain that the vanishing
$H^0\bigl(C, \bigwedge^{p+1}Q_L\otimes \omega_C\otimes L^{\vee}\bigr)=0$
holds, once we establish that for each subdivisor $D_{p+1}$ of degree $p+1$ of the divisor $p_1+\ldots+p_{r-1}$, one has
$$H^0(C,\omega_C\otimes L^{\vee}(D_{p+1}))=0,$$ and for each subdivisor $D_{r-1-p}$ of $p_1+\cdots+p_{r-1}$ one has $$H^0(C,\omega_C(-D_{r-1-p}))=0.$$ The first
vanishing follows immediately for degree reasons, the second is implied by the inequality $r-1-p=d-g-p-1\geq p$, which is precisely
our hypothesis.
\end{proof}

The same filtration argument on the vector bundle $M_L$ shows that in the case of curves, the Betti diagram consists of only two
rows, namely that of linear syzygies and that of quadratic syzygies respectively.

\begin{prop}
 Let $L$ be a globally generated non-special line bundle $L$ on a smooth curve $C$. Then $K_{p,q}(C,L)=0$ for all $q\geq 3$ and all $p$.
\end{prop}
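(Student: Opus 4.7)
My plan is to use Proposition \ref{kernelbundles} to convert the Koszul vanishing into an ordinary sheaf cohomology vanishing on $C$, and then let the fact that $C$ is a curve ($H^2 = 0$) do most of the work. Concretely, Proposition \ref{kernelbundles} gives
$$K_{p,q}(C,L) \cong \mathrm{Ker}\bigl\{H^1\bigl(C, \bigwedge^{p+1} M_L \otimes L^{q-1}\bigr) \to \bigwedge^{p+1} H^0(L) \otimes H^1(C, L^{q-1})\bigr\}$$
for every $p,q$, so it suffices to kill the outer $H^1$ groups for $q \geq 3$.

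The first step is to verify that $H^1(C, L^k) = 0$ for every $k \geq 1$. The case $L \cong \OO_C$ is trivial, so I assume $L$ nontrivial; then global generation forces $h^0(L) \geq 2$, and Riemann--Roch together with the non-speciality hypothesis $H^1(L)=0$ yields $\deg L = h^0(L) + g - 1 \geq g+1$. Hence $\deg L^k \geq 2g+2 > 2g-2$ for $k \geq 2$, so $L^k$ is non-special for all $k \geq 2$, and by hypothesis also for $k=1$. Applied with $k = q-1 \geq 2$, this collapses the kernel above to an isomorphism
$$K_{p,q}(C,L) \cong H^1\bigl(C, \bigwedge^{p+1} M_L \otimes L^{q-1}\bigr)$$
whenever $q \geq 3$.

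The second step is to kill this remaining $H^1$ by shifting the tautological exact sequence for $M_L$ up one index and tensoring with $L^{q-2}$:
$$0 \to \bigwedge^{p+2} M_L \otimes L^{q-2} \to \bigwedge^{p+2} H^0(L) \otimes L^{q-2} \to \bigwedge^{p+1} M_L \otimes L^{q-1} \to 0.$$
The associated long exact sequence on the curve $C$ terminates with $H^2$ of the leftmost term, which vanishes for dimension reasons, so I obtain a surjection
$$\bigwedge^{p+2} H^0(L) \otimes H^1(C, L^{q-2}) \twoheadrightarrow H^1\bigl(C, \bigwedge^{p+1} M_L \otimes L^{q-1}\bigr).$$
For $q \geq 3$ one has $q-2 \geq 1$, so the source is zero by the first step, and the desired vanishing drops out.

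No genuine obstacle appears: the argument is purely cohomological, driven by $H^2 = 0$ on a curve and by the observation that every positive power of $L$ is non-special under the given hypotheses. The only care required is to dispose of the edge case $L \cong \OO_C$, and to note that the computation remains valid at the boundary $p+1 = \mathrm{rk}\,M_L$, where $\bigwedge^{p+2} M_L = 0$ and the sequence simply identifies $\bigwedge^{p+1} M_L \otimes L^{q-1}$ with $L^{q-2}$, whose $H^1$ also vanishes.
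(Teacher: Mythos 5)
Your argument is correct, but it is not the route the paper has in mind. The paper disposes of this Proposition with one sentence --- ``the same filtration argument on $M_L$'' --- meaning the technique of the proof of Theorem \ref{greeenvanish}: choose $r-1$ general points, filter $M_L$ by the line bundles $\OO_C(-p_i)$ and the pencil $L(-p_1-\cdots-p_{r-1})\cong M_{L(-p_1-\cdots-p_{r-1})}^{\vee\vee}$ via the Base Point Free Pencil Trick, take exterior powers, and check that each line-bundle graded piece of $\bigwedge^{p+1}M_L\otimes L^{q-1}$ has vanishing $H^1$ for degree reasons once $q\geq 3$. You instead bypass the filtration entirely: after reducing to $H^1\bigl(C,\bigwedge^{p+1}M_L\otimes L^{q-1}\bigr)$ via Proposition \ref{kernelbundles} and the non-speciality of $L^{q-1}$, you shift the tautological sequence $0\to\bigwedge^{p+2}M_L\to\bigwedge^{p+2}H^0(L)\otimes\OO_C\to\bigwedge^{p+1}M_L\otimes L\to 0$ one step up, twist by $L^{q-2}$, and use $H^2=0$ on a curve to get a surjection from $\bigwedge^{p+2}H^0(L)\otimes H^1(C,L^{q-2})=0$. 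Both proofs consume the same input --- that every positive power of $L$ is non-special, which you correctly derive from $h^0(L)\geq 2$ and Riemann--Roch --- but yours is purely homological and avoids any choice of general points; its advantage is uniformity and brevity for this crude ``only two rows'' statement, while the filtration method is the one that scales to the sharper Theorem \ref{greeenvanish}, where one must control $H^0$ of the individual line-bundle quotients rather than merely quote $H^2=0$. Your handling of the edge cases ($L\cong\OO_C$, which forces $g=0$ under non-speciality, and $p+2>\mathrm{rk}\,M_L$) is careful and correct.
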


Thus making abstraction of the $0$-th row in which the only non-zero entry is $b_{0,0}=1$, the resolution of each non-special curve $C\subseteq \PP^r$
has the following shape:

\begin{table}[htp!]
\begin{center}
\begin{tabular}{|c|c|c|c|c|c|c|c|c|}
\hline
$1$ & $2$ & $\ldots$ & $p-1$ & $p$ & $p+1$   & $\ldots$ & $r$\\
\hline
$b_{1,1}$ & $b_{2,1}$ & $\ldots$ & $b_{p-1,1}$ & $b_{p,1}$ & $b_{p+1,1}$  &  $\ldots$ & $b_{r,1}$ \\
\hline
$b_{1,2}$ & $b_{2,2}$  & $\ldots$ & $b_{p-1,2}$ & $b_{p,2}$ & $b_{p+1,2}$  & $\ldots$ & $b_{r,2}$\\
\hline
\end{tabular}
\end{center}

\caption{The Betti table of a non-special curve of genus $g$}
\end{table}

\vskip 4pt

As already pointed out, the Hilbert function $h_C(t)=dt+1-g$ of the curve $C$ determines the difference of Betti numbers of each diagonal in the Betti diagram.

\begin{thm}\label{diagbetti}
The difference of  Betti numbers of each diagonal diagonal of the Betti diagram of a non-special line bundle $L$ on a curve $C$ is an Euler characteristic of a vector bundle on $C$. Precisely,
\begin{equation}\label{diffbetti}
b_{p+1,1}(C,L)-b_{p,2}(C,L)=(p+1)\cdot {d-g\choose p+1}\Bigl(\frac{d+1-g}{p+2}-\frac{d}{d-g}\Bigr).
\end{equation}
\end{thm}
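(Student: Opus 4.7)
The plan is to extract the differences $b_{p+1,1}-b_{p,2}$ from the Hilbert function of $C$. By the preceding proposition, for a non-special (and projectively normal) $L$ the only non-zero entries of the Betti diagram are $b_{0,0}=1$ together with $b_{p,1}$ and $b_{p,2}$ for $p\geq 1$. Hence along each diagonal $p+q=k\geq 2$ only $b_{k-1,1}$ and $b_{k-2,2}$ can contribute, so the alternating sum $B_k=\sum_p(-1)^p b_{p,k-p}$ satisfies $B_{p+2}=(-1)^{p+1}(b_{p+1,1}-b_{p,2})$, and the task reduces to evaluating $B_{p+2}$.

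Setting $r=d-g$ and using $h_C(0)=1$ together with $h_C(t)=dt+1-g$ for $t\geq 1$, a direct summation yields the Hilbert series
$$H_C(z)=\frac{1+(r-1)z+gz^2}{(1-z)^2},$$
so the numerator polynomial with respect to $(1-z)^{r+1}$ is $K(z)=(1+(r-1)z+gz^2)(1-z)^{r-1}$. By the general relation between Hilbert series and Betti numbers, $B_k$ is the coefficient of $z^k$ in $K(z)$; the binomial expansion of $(1-z)^{r-1}$ then writes $b_{p+1,1}-b_{p,2}$ explicitly as a signed combination of $\binom{r-1}{p+2}$, $\binom{r-1}{p+1}$ and $\binom{r-1}{p}$.

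The remaining, and main, technical step is the purely combinatorial manipulation needed to bring this signed combination into the asserted form $(p+1)\binom{d-g}{p+1}\bigl(\tfrac{d+1-g}{p+2}-\tfrac{d}{d-g}\bigr)$. This is achieved by substituting $g=d-r$, applying Pascal's rule, and using the standard identities $(p+2)\binom{r+1}{p+2}=(r+1)\binom{r}{p+1}$ and $(p+1)\binom{r}{p+1}=r\binom{r-1}{p}$. For an argument more faithful to the theorem's description of the difference as an Euler characteristic, one may alternatively invoke Proposition~\ref{kernelbundles} to identify $K_{p,2}(C,L)\cong H^1(\wedge^{p+1}M_L\otimes L)$ (using $H^1(L)=0$), and then use the long exact cohomology sequence associated with $0\to\wedge^{p+2}M_L\to\wedge^{p+2}H^0(C,L)\otimes\mathcal{O}_C\to\wedge^{p+1}M_L\otimes L\to 0$, together with the vanishing $H^0(\wedge^{p+2}M_L)=K_{p+2,0}(C,L)=0$ (projective normality), to obtain $b_{p+1,1}-b_{p,2}=-\chi(\wedge^{p+2}M_L)-g\binom{r+1}{p+2}$; applying Riemann--Roch with $\operatorname{rk}\wedge^{p+2}M_L=\binom{r}{p+2}$ and $\deg\wedge^{p+2}M_L=-d\binom{r-1}{p+1}$ then produces the same closed form after the same binomial simplifications.
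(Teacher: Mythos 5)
Your proposal is correct and follows exactly the route the paper indicates (the paper states Theorem \ref{diagbetti} without proof, merely noting that the Hilbert function $h_C(t)=dt+1-g$ determines the diagonal alternating sums $B_k$ and that the difference is an Euler characteristic): your Hilbert-series computation with numerator $(1+(r-1)z+gz^2)(1-z)^{r-1}$ and your alternative identification $b_{p+1,1}-b_{p,2}=-\chi(\bigwedge^{p+2}M_L)-g\binom{r+1}{p+2}$ both check out, including the vanishings $b_{p,0}=K_{p,0}(C,L)=H^0(\bigwedge^p M_L)=0$ for $p\geq 1$ and $K_{p,q}=0$ for $q\geq 3$ needed to isolate the two terms on each diagonal.
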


\vskip 4pt

Green's Theorem \label{greeenvanish} ensures that $b_{p,2}(C,L)=0$ as long $p\leq d-2g-1$. On the other hand formula (\ref{diffbetti}) indicates that when
$p$ is relatively high, then  $b_{p,2}>b_{p+1,1}(C,L)\geq 0$, that is, there will certainly appear $p$-th order non-linear syzygies. One can now distinguish two
main goals concerning syzygies of curves:

\vskip 3pt

\begin{enumerate}
\item Given a line bundle $L$ of degree $d$ on a genus $g$ curve, determine which Betti numbers are zero.
\item For those Betti numbers $b_{p,1}(C,L)$ and $b_{p,2}(C,L)$ which are non-zero, determine their exact value.
\end{enumerate}

As we shall explain, there is a satisfactory answer to the first question in many important situations, both for the linear and for the quadratic row of syzygies.
The second question remains to date largely unanswered.

\vskip 5pt

A crucial aspect of syzygy theory is to identify the geometric sources of syzygies and obtain in this ways guidance as to which Koszul cohomology groups  vanish.
The first, and in some sense most important instance of such a phenomenon, when non-trivial geometry implies non-trivial syzygies is given by
the \emph{Green-Lazarsfeld Non-Vanishing Theorem}
\cite{G}.

\begin{thm}\label{glnonvan}
 Let $L$ be a globally generated line bundle on a variety $X$ and suppose one can write $L=L_1\otimes L_2$, where $r_i:=r(L_i)\geq 1$. Then
 $$K_{r_1+r_2-1,1}(X,L_1\otimes L_2)\neq 0.$$
\end{thm}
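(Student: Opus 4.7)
The plan is to exhibit an explicit nonzero Koszul class in $K_{r_1+r_2-1,1}(X,L)$, manufactured directly from the decomposable ``Segre'' sections. I would fix bases $\{s_0,\ldots,s_{r_1}\}$ of $H^0(X,L_1)$ and $\{t_0,\ldots,t_{r_2}\}$ of $H^0(X,L_2)$, set $\sigma_{ij}:=s_it_j\in V:=H^0(X,L)$, and record the identities
$$\sigma_{ij}\sigma_{k\ell}=\sigma_{i\ell}\sigma_{kj}\quad\text{in }H^0(X,L^{\otimes 2}),$$
which express the fact that the $(r_1+1)\times(r_2+1)$ matrix $(\sigma_{ij})$ factors as $s\cdot t^{T}$ and hence has rank one on $X$. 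These Plücker-type relations are the sole engine of every cancellation that follows.

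By definition $K_{r_1+r_2-1,1}(X,L)$ is the middle cohomology of
$$\bigwedge^{r_1+r_2}V\xrightarrow{d_{r_1+r_2,0}}\bigwedge^{r_1+r_2-1}V\otimes V\xrightarrow{d_{r_1+r_2-1,1}}\bigwedge^{r_1+r_2-2}V\otimes H^0(X,L^{\otimes 2}).$$
I would write down an explicit candidate cocycle
$$\gamma=\sum_{(i,j)}(-1)^{i+j}\,\omega_{ij}\otimes\sigma_{i,j}\in\bigwedge^{r_1+r_2-1}V\otimes V,$$
of ``determinantal'' type, where the summation is over a prescribed set of bi-indices and $\omega_{ij}\in\bigwedge^{r_1+r_2-1}V$ is an explicit wedge of Segre sections chosen complementarily to $\sigma_{i,j}$. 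In the baby case $r_1=r_2=1$ this specialises to $\gamma=\sigma_{00}\otimes\sigma_{11}-\sigma_{01}\otimes\sigma_{10}$, whose cycle property is literally the Segre relation $\sigma_{00}\sigma_{11}=\sigma_{01}\sigma_{10}$. Applying $d_{r_1+r_2-1,1}$ to the general $\gamma$ produces terms carrying quadratic products $\sigma_{a,b}\sigma_{c,d}\in H^0(X,L^{\otimes 2})$; substituting $\sigma_{a,b}\sigma_{c,d}=\sigma_{a,d}\sigma_{c,b}$ and swapping indices in the summation pairs these into contributions with opposite signs that cancel. This sign bookkeeping is routine once the combinatorial layout of the summation is fixed.

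The main obstacle, and the technical heart of the argument, is to verify $\gamma\notin\mathrm{Im}(d_{r_1+r_2,0})$. Every boundary has the rigid form $\sum_a(-1)^a(\widehat{w_a})\otimes w_a$, forcing a total antisymmetry between the wedge and the tensor factor; by contrast $\gamma$ carries a genuinely ``bi-symmetric'' structure coming from its two independent index ranges. I would detect this by pairing $\gamma$ against a linear functional $\Phi\in\bigl(\bigwedge^{r_1+r_2-1}V\otimes V\bigr)^{*}$ whose adjoint $\Phi\mapsto\Phi\wedge(\,\cdot\,)\in\bigl(\bigwedge^{r_1+r_2}V\bigr)^{*}$ vanishes identically, so that $\Phi$ automatically annihilates every boundary; the existence of such $\Phi$ follows from a Plücker-style syzygy in the dual exterior algebra supported on the subspace of $V$ spanned by the Segre sections. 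Verifying $\langle\Phi,\gamma\rangle\ne 0$ then reduces to a finite, concrete nondegeneracy check. This last linear-algebra step is delicate — in particular it must be robust against the possibility that $H^0(X,L_1)\otimes H^0(X,L_2)\to V$ fails to be injective — and is where the real work of the theorem lies.
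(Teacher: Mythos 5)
Your witness class is the right one --- in the case $r_1=r_2=1$ it coincides exactly with the syzygy $\gamma=(\tau_0\sigma_0)\otimes(\sigma_1\tau_1)-(\tau_0\sigma_1)\otimes(\sigma_0\tau_1)$ used in the paper, and the cocycle verification via the rank-one relations $\sigma_{ij}\sigma_{k\ell}=\sigma_{i\ell}\sigma_{kj}$ is sound. But there are two genuine gaps. First, your general $\gamma$ is not actually defined: ``a wedge of Segre sections chosen complementarily to $\sigma_{i,j}$'' has $(r_1+1)(r_2+1)-1$ natural factors, not the required $r_1+r_2-1$. The correct cocycle wedges only the sections lying on the ``cross'' through a base point, namely $\tau_0\sigma_0,\ldots,\widehat{\tau_0\sigma_i},\ldots,\tau_0\sigma_{r_1}$ together with $\sigma_0\tau_1,\ldots,\widehat{\sigma_0\tau_j},\ldots,\sigma_0\tau_{r_2}$, summed over $0\leq i\leq r_1$ and $1\leq j\leq r_2$; without this specific choice the degree count and the cancellation scheme both fail.

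Second, and more seriously, the step you yourself flag as ``where the real work lies'' --- showing $\gamma\notin\mathrm{Im}(d_{r_1+r_2,0})$ --- is not carried out. Asserting the existence of a functional $\Phi$ annihilating all coboundaries is trivial (the transpose of $d_{r_1+r_2,0}$ has an enormous kernel); the content is exhibiting one with $\langle\Phi,\gamma\rangle\neq 0$, and no candidate is given. This is exactly the difficulty that the proof in the paper (following Voisin) is designed to circumvent: from the exact sequence $0\to M_{L_1}\oplus M_{L_2}\to M_L\to W\otimes\OO_X\to 0$ one gets an injection $H^0(X,L_1)^{\vee}\oplus H^0(X,L_2)^{\vee}\hookrightarrow H^0\bigl(X,\bigwedge^{r_1+r_2-1}M_L\otimes L\bigr)$, and one checks that $\gamma$, viewed inside $H^0\bigl(X,\bigwedge^{r_1+r_2-1}M_L\otimes L\bigr)\big/\bigwedge^{r_1+r_2}H^0(X,L)\cong K_{r_1+r_2-1,1}(X,L)$, corresponds to the visibly nonzero element $(0,\tau_1\wedge\cdots\wedge\tau_{r_2})$. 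That bundle-theoretic identification is what replaces your missing functional, and it is also what makes the argument immune to the failure of injectivity of $H^0(X,L_1)\otimes H^0(X,L_2)\to H^0(X,L)$, a danger you correctly identify but do not neutralize. As written, the proposal establishes that $\gamma$ is a Koszul cycle but not that it represents a nonzero cohomology class, so the theorem is not proved.
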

\begin{proof}
 We follow an argument due to Voisin \cite{V2}. We choose general sections $\sigma\in H^0(X,L_1)$ and $\tau\in H^0(X,L_2)$ and introduce the vector space
 $$W:=H^0(X,L)/\bigl(\sigma\cdot H^0(X,L_2)+\tau\cdot H^0(X,L_1)\bigr).$$
 We then have the following  short exact sequence:
 $$0\longrightarrow M_{L_1}\oplus M_{L_2}\longrightarrow M_L\longrightarrow W\otimes \OO_X\longrightarrow 0.$$
 Take $(r_1+r_2-1)$-st exterior powers in this short exact sequence and obtain an injection
 \begin{equation}\label{inj1}
 \bigwedge^{r_1+r_2-1} (M_{L_1}\oplus M_{L_2})\hookrightarrow \bigwedge^{r_1+r_2-1} M_L.
 \end{equation}
 Since $\bigwedge^{r_1-1} M_{L_1}\cong Q_{L_1}\otimes L_1^{\vee}$ and $\bigwedge^{r_2-1} M_{L_2}\cong Q_{L_2}\otimes L_2^{\vee}$, whereas clearly $\bigwedge^{r_1} M_{L_1}\cong L_1^{\vee}$ and
 $\bigwedge^{r_2} M_{L_2}\cong L_2^{\vee}$, by tensoring the injection (\ref{inj1}) with
 $L\cong L_1\otimes L_2$, we obtain the following injection $Q_{L_1}\oplus Q_{L_2}\hookrightarrow \bigwedge^{r_1+r_2-1} M_L\otimes L$, leading to an injection at the level of global sections
 \begin{equation}\label{inj2}
H^0(X,L_1)^{\vee}\oplus H^0(X,L_2)^{\vee}\hookrightarrow H^0\bigl(X,\bigwedge^{r_1+r_2-1} M_L\otimes L\bigr).
\end{equation}
We recall the description of the Koszul cohomology group $$K_{r_1+r_2-1,1}(X,L)\cong H^0\bigl(X, \bigwedge^{r_1+r_2-1} M_L\otimes L\bigr)/\bigwedge^{r_1+r_2} H^0(X,L).$$
The proof will be complete once one shows that at least one element of $H^0\bigl(X,\bigwedge^{r_1+r_2-1} M_L\otimes L\bigr)$ produced via the injection (\ref{inj2}), does not lie in the image
of $\bigwedge^{r_1+r_2} H^0(X,L)$.

\vskip 4pt

In order to achieve this, let us choose a basis $(\sigma_0=\sigma, \sigma_1, \ldots, \sigma_{r_1})$ of $H^0(X,L_1)$ and  a basis $(\tau_0=\tau, \tau_1, \ldots, \tau_{r_2})$ of $H^0(X,L_2)$
respectively. Then one syzygy
obtained by the inclusion (\ref{inj2}) which gives rise to a non-zero element in $K_{r_1+r_2-1}(X,L)$ is given by the following explicit formula, see also \cite{V2} equation (1.11.1):
$$\sum_{i=0}^{r_1}\sum_{j=1}^{r_2} (-1)^{i+j} (\tau_0 \sigma_0)\wedge \ldots \wedge \widehat{(\tau_0\sigma_i)}\wedge \ldots \wedge(\tau_0\sigma_{r_1})\wedge(\sigma_0\tau_1)\wedge \ldots
\wedge \widehat{(\sigma_0\tau_j)}\wedge \ldots \wedge (\sigma_0\tau_{r_2})\otimes (\sigma_i\tau_j).$$
Under the  isomorphisms $H^0(X,L_1)^{\vee}\cong \bigwedge^{r_1} H^0(X,L_1)$ and $H^0(X,L_2)^{\vee}\cong \bigwedge^{r_2} H^0(X,L_2)$, under the injection (\ref{inj2}), this last syzygy corresponds to the element $(0, \tau_1\wedge \ldots \wedge \tau_{r_2})$.
\end{proof}

\begin{remark}
 Particularly  instructive is the case when $r_1=r_2=1$, that is, both $L_1$ and $L_2$ are pencils. Keeping the notation above, the non-zero syzygy provided by the Green-Lazarsfeld Non-Vanishing Theorem
 has the following form:
 $$\gamma:=-(\tau_0\sigma_1)\otimes (\sigma_0\tau_1)+(\tau_0\sigma_0)\otimes (\sigma_1\tau_1)\in H^0(X,L)\otimes H^0(X,L),$$
 giving rise to a non-zero element in $K_{1,1}(X,L)$. The geometric interpretation of this syzygy is transparent. The map
 $\varphi_L$ factor through the map $(\varphi_{L_1},\varphi_{L_2}):X\dashrightarrow \PP^1\times \PP^1$ induced by the two pencils $L_1$ and $L_2$. The quadric $\gamma$
 is then the pull-back of the rank $4$ quadric $\PP^1\times \PP^1\hookrightarrow \PP^3$ under the projection $\PP^r\dashrightarrow \PP^3$ induced by the space $W$.
\end{remark}

\section{Green's Conjecture}

Mark Green's Conjecture formulated in 1984 in \cite{G} is an elegant and deceptively simple statement concerning the syzygies of a canonically embedded curve
$C$ of genus $g$. Despite the fact that it has generated a lot of attention and that many important results have been established,  Green's Conjecture, in its maximal
generality, remains open. Especially in the 1980's and 1990's, progress on Green's Conjecture guided much of both the research on syzygies of algebraic varieties,
as well as the development of the computer algebra system \emph{Macaulay}.

\vskip 3pt

Suppose $C\subseteq \PP^{g-1}$ is a non-hyperelliptic canonically embedded curve of genus $g$. Our main goal is to determine the Betti numbers of $C$, in particular
understand in what way the geometry of $C$ influences the shape of the Betti diagram of its canonical embedding. As already pointed out, the Betti diagram of a curve embedded
by a non-special line bundle has only two non-trivial rows, corresponding to linear and quadratic syzygies respectively. For canonical curves the situation is
even simpler, for the two rows contain the same information.

\begin{prop}
 One has the following duality for the Koszul cohomology groups of a canonical curve:
 $$K_{p,q}(C,\omega_C)\cong K_{g-p-2,3-q}(C, \omega_C)^{\vee}.$$
\end{prop}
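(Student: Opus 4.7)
The plan is to interpret the claimed duality as the self-duality of a Gorenstein graded ring, namely the canonical ring $R := \Gamma_C(\omega_C) = \bigoplus_{q \geq 0} H^0(C, \omega_C^q)$ of the non-hyperelliptic canonically embedded curve $C \subseteq \mathbb{P}^{g-1}$.

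First I would verify that $R$ is Cohen-Macaulay of Krull dimension $2$ as a module over $S = \mathbb{C}[x_0, \ldots, x_{g-1}]$. The projective normality of non-hyperelliptic canonical curves (Max Noether's theorem) together with the vanishing $H^1(C, \omega_C^q) = 0$ for $q \geq 1$ ensure that $R$ has no intermediate local cohomology at the irrelevant ideal, so the minimal free $S$-resolution $F_\bullet$ of $R$ has length exactly $c := g - 2$.

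Next I would identify the graded canonical module $\omega_R := \mathrm{Ext}^{c}_S(R, S(-g))$. For an arithmetically Cohen-Macaulay variety $X \subseteq \mathbb{P}^r$ embedded by $L$ one has the standard identification $\omega_R \cong \bigoplus_q H^0(X, \omega_X \otimes L^q)$; specializing to $X = C$, $L = \omega_C$ yields
$$\omega_R \cong \bigoplus_q H^0(C, \omega_C^{q+1}) = R(1),$$
so $R$ is Gorenstein with $a$-invariant equal to $1$.

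Finally I would invoke the self-duality of minimal resolutions of Gorenstein rings. Applying $\mathrm{Hom}_S(-, S)$ to $F_\bullet$ and reversing the arrows produces a minimal free resolution of $\mathrm{Ext}^c_S(R, S) = \omega_R(g) = R(g+1)$; by uniqueness this must coincide with $F_\bullet(g+1)$, giving $F_{c-p}^\vee \cong F_p(g+1)$ for each $p$. Writing $F_p = \bigoplus_q S(-p-q)^{b_{p,q}(C, \omega_C)}$ and matching the degrees of generators immediately yields $b_{p,q}(C, \omega_C) = b_{g - 2 - p,\ 3 - q}(C, \omega_C)$, where the shift $3$ arises as $g + 1 - c$. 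The same duality, read at the level of $\mathrm{Tor}^S_p(R, \mathbb{C})$, is canonical and upgrades to the isomorphism $K_{p,q}(C, \omega_C) \cong K_{g - p - 2,\ 3 - q}(C, \omega_C)^\vee$ claimed in the proposition.

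The main obstacle is assembling the Gorenstein property with the precise $a$-invariant: each of the three ingredients (projective normality, the vanishings $H^1(C, \omega_C^q) = 0$ for $q \geq 1$, and the ACM computation of $\omega_R$) is classical, but their combination is what produces the correct shift $3 - q$ rather than the shift $2 - q$ one obtains from Green's general duality applied to an arbitrary non-special line bundle. The non-hyperelliptic hypothesis is essential since Noether's theorem fails in the hyperelliptic case.
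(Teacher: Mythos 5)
Your argument is correct, but it is not the route the paper takes. The paper's proof is a one-liner: it applies Serre duality on $C$ to the description of $K_{p,q}(C,\omega_C)$ in terms of the Lazarsfeld bundle $M_{\omega_C}$ from Proposition \ref{kernelbundles}, using that $M_{\omega_C}$ has rank $g-1$ and determinant $\omega_C^{\vee}$, so that $\bigwedge^{p+1}M_{\omega_C}^{\vee}\cong \bigwedge^{g-2-p}M_{\omega_C}\otimes \omega_C$ and the kernel-of-$H^1$ and cokernel-of-$H^0$ descriptions of the two Koszul groups are exchanged under duality. You instead prove that the canonical ring is arithmetically Gorenstein with $\omega_R\cong R(1)$ and read the symmetry off the self-dual minimal free resolution; your bookkeeping $3=g+1-(g-2)$ is right, and the resulting equality $b_{p,q}=b_{g-2-p,3-q}$ does yield the stated isomorphism of finite-dimensional Koszul groups. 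The Gorenstein route buys the stronger structural statement that the entire resolution is self-dual, at the price of taking Max Noether's theorem (hence non-hyperellipticity) as an input; the vector-bundle route is softer, needs no projective normality up front, and generalizes to the twisted duality $K_{p,q}(C,L)^{\vee}\cong K_{r-1-p,2-q}(C,\omega_C;L)$ for other line bundles. One small inaccuracy to fix: $H^1(C,\omega_C)\cong\mathbb{C}\neq 0$, so your claimed vanishing for $q\geq 1$ fails at $q=1$; this is harmless, however, since those groups only feed the top local cohomology $H^2_{\mathfrak{m}}(R)$ and do not obstruct the Cohen--Macaulay property, which already follows from projective normality.
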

\begin{proof}
 Follows via Serre duality for vector bundles, using the description of Koszul cohomology in terms of Lazarsfeld bundles provided in Proposition \ref{kernelbundles}.
\end{proof}

Setting $b_{p,q}:=b_{p,q}(C,\omega_C)$, we observe that, in particular, $b_{g-2,3}=b_{0,0}=1$. Unlike in the case of non-special curves,
the Betti diagram of a canonical curve has a unique non-trivial entry in the third row. Furthermore, $b_{p,1}=b_{g-2-p,2}$, that is, the row of quadratic syzygies
is a reflection of the linear strand.

\vskip 4pt

We now apply the Green-Lazarsfeld Non-Vanishing Theorem to the case of canonical curves. We write
$$K_C=L+(K_C-L),$$
where we assume that $r:=r(L)=h^0(C,L)-1\geq 1$ and $r(K_C-L)\geq 1$. By Riemann-Roch,  $r(K_C-L)=g+r-d-1$, therefore Theorem \ref{glnonvan} implies that the following
equivalent non-vanishing statements hold:
$$K_{g+2r-d-2,1}(C,\omega_C)\neq 0\Longleftrightarrow K_{d-2r,2}(C,\omega_C)\neq 0.$$

Thus existence of linear series of type $\mathfrak g^r_d$ always leads to a non-linear syzygy of order $d-2r$.
This leads us to the definition of the \emph{Clifford index}, as a way of measuring the complexity of a curve of fixed genus.

\begin{definition}
 Let $C$ be a smooth curve of genus $g$. We define the \emph{Clifford index} of $C$ as the following quantity:
 $$\mathrm{Cliff}(C):=\mathrm{min}\Bigl\{d-2r: L\in \mathrm{Pic}^d(C), \ \ d=\mathrm{deg}(L)\leq g-1,\  \ r:=r(L)\geq 1 \Bigr\}.$$
\end{definition}

It follows from the classical Clifford Theorem that $\mbox{Cliff}(C)\geq 0$ and that $\mbox{Cliff}(C)=0$ if and only if $C$ is hyperelliptic.
The Clifford index is lower semicontinuous in families and offers a stratification of the moduli space $\cM_g$ of smooth curves of genus $g$.
The Clifford index is closely related to another important invariant, the \emph{gonality} of $C$, defined as the minimal degree of a finite map
$C\rightarrow \PP^1$. By definition $\mbox{Cliff}(C)\leq \mbox{gon}(C)-2$. It can be proved \cite{CM} that always
$$\mbox{gon}(C)-3\leq \mbox{Cliff}(C)\leq \mbox{gon}(C)-2.$$
For a general curve of fixed gonality, one has $\mbox{Cliff}(C)=\mbox{gon}(C)-2$, see \cite{CM} Corollary 2.3.2.

\vskip 4pt

It follows from general Brill-Noether theory \cite{ACGH} that for a general curve $C$ of genus $g$, one has
\begin{equation}\label{bngen}
\mbox{Cliff}(C)=\Bigl\lfloor \frac{g-1}{2}\Bigr\rfloor \  \mbox{ and } \ \mbox{gon}(C)=\Bigl\lfloor \frac{g+3}{2}\Bigr\rfloor.
\end{equation}

We can summarize the discussion above as follows:

\begin{prop}\label{greentriv}
If $C$ is a smooth curve of genus $g$ then $K_{\mathrm{Cliff}(C),2}(C,\omega_C)\neq 0.$
\end{prop}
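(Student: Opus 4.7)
The plan is to apply the Green--Lazarsfeld Non-Vanishing Theorem (Theorem \ref{glnonvan}) to a linear series that witnesses the Clifford index of $C$, and then transport the resulting non-vanishing from the linear row of the Betti table to the quadric row via the duality $K_{p,q}(C,\omega_C)\cong K_{g-p-2,3-q}(C,\omega_C)^{\vee}$ stated in the proposition immediately preceding this one.

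Concretely, I would choose a line bundle $L\in\mathrm{Pic}^d(C)$ with $d=\mathrm{deg}(L)\le g-1$ and $r:=r(L)\ge 1$ realizing $\mathrm{Cliff}(C)=d-2r$, and write $\omega_C=L_1\otimes L_2$ with $L_1:=L$ and $L_2:=\omega_C\otimes L^{\vee}$. Riemann--Roch gives
\[
r_2:=r(L_2)=g-d+r-1,
\]
and the inequality $d\le g-1$ forces $r_2\ge r\ge 1$, so both factors have projective dimension at least $1$. Since $\omega_C$ is globally generated on any smooth curve of genus $g\ge 2$, Theorem \ref{glnonvan} applies and yields
\[
K_{r_1+r_2-1,\,1}(C,\omega_C)\neq 0.
\]

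Now I would invoke Koszul duality for canonical curves to read off the desired non-vanishing in the quadric strand:
\[
K_{g-r_1-r_2-1,\,2}(C,\omega_C)\cong K_{r_1+r_2-1,\,1}(C,\omega_C)^{\vee}\neq 0.
\]
The last step is a bookkeeping computation: $r_1+r_2=r+(g-d+r-1)=g-d+2r-1$, so
\[
g-r_1-r_2-1=d-2r=\mathrm{Cliff}(C),
\]
finishing the argument.

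There is no real obstacle here; the statement is essentially an immediate combination of two ingredients already available in the excerpt, and the mildest care needed is to confirm that the partner linear series $K_C-L$ has $r(K_C-L)\ge 1$ so that the Green--Lazarsfeld hypothesis is genuinely met. This reduces to the inequality $d\le g-1$, which is built into the definition of $\mathrm{Cliff}(C)$.
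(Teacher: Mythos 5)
Your argument is correct and is essentially identical to the paper's: the paper also writes $K_C=L+(K_C-L)$ for an $L$ computing the Clifford index, applies the Green--Lazarsfeld Non-Vanishing Theorem to get $K_{g+2r-d-2,1}(C,\omega_C)\neq 0$, and uses the duality $K_{p,1}(C,\omega_C)\cong K_{g-p-2,2}(C,\omega_C)^{\vee}$ to conclude $K_{d-2r,2}(C,\omega_C)\neq 0$. Your extra check that $r(K_C-L)\geq 1$ follows from $d\leq g-1$ is exactly the point the paper leaves implicit.
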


Green's Conjecture amounts to a converse of Proposition \ref{greentriv}. By the geometric version of the Riemann-Roch Theorem, the existence of an effective  divisor $D$ of degree
$d\leq g-1$ with $h^0(C,\OO_C(D))=r+1$, amounts to the statement  that under the canonical embedding, we have  $\langle D\rangle \cong \PP^{d-r-1}\subseteq \PP^{g-1}$. Thus tautologically, special linear systems on $C$
amount to special secant planes to its canonical embedding. Under this equivalence, Green's Conjecture is saying that  the universal source of non-linear syzygies
for the canonical curve is given by secants.

\begin{conjecture}\label{greenconj}
For every smooth curve of genus $g$, one has the following equivalence:
 $$K_{p,2}(C,\omega_C)=0 \Longleftrightarrow p<\mathrm{Cliff}(C).$$
\end{conjecture}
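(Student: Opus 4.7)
The equivalence splits into two halves: the ``easy'' non-vanishing direction stating $K_{p,2}(C,\omega_C)\neq 0$ whenever $p\geq \mathrm{Cliff}(C)$, and the ``hard'' vanishing direction stating $K_{p,2}(C,\omega_C)=0$ whenever $p<\mathrm{Cliff}(C)$. The non-vanishing at $p=\mathrm{Cliff}(C)$ is already furnished by Proposition \ref{greentriv}. For $p>\mathrm{Cliff}(C)$ I would invoke the Serre duality $b_{p,2}=b_{g-2-p,1}$ and apply the Green-Lazarsfeld Non-Vanishing Theorem \ref{glnonvan} to each admissible decomposition $\omega_C=L\otimes(\omega_C\otimes L^\vee)$ as $L$ ranges over special line bundles of varying Clifford index, producing non-trivial linear syzygies at each position $q=g-2-(\deg(L)-2r(L))$; projective normality and Petri-type generation at the bottom of the linear strand then fill out $K_{q,1}(C,\omega_C)\neq 0$ throughout $1\leq q\leq g-2-\mathrm{Cliff}(C)$.

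The bulk of the work is the vanishing direction, and the plan I would follow is the Voisin-style K3 strategy. First, embed $C$ as a smooth hyperplane section of a polarized K3 surface $(S,H)$ of genus $g$ with $\mathrm{Pic}(S)=\mathbb{Z}\cdot H$, so that every smooth curve in $|H|$ attains the generic Clifford index $\lfloor(g-1)/2\rfloor$; Mukai's theory supplies such an $S$ in every genus up to known bounds. Since $H^1(S,qH)=0$ for all $q\in\mathbb{Z}$, the Lefschetz hyperplane principle (Theorem \ref{lef}) yields canonical isomorphisms $K_{p,q}(C,\omega_C)\cong K_{p,q}(S,H)$, transferring the question to a vanishing statement on $S$. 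Second, using Proposition \ref{kernelbundles}, rewrite $K_{p,2}(S,H)$ as a kernel inside $H^1(S,\bigwedge^{p+1}M_H\otimes H)$, then identify this cohomology with a cohomology group on the Hilbert scheme $S^{[p+1]}$ of length-$(p+1)$ subschemes. Third, kill this cohomology by realizing the incidence subvariety of clusters lying on some curve in $|H|$ as having exactly the expected codimension, with the obstruction to extending syzygies controlled precisely by the existence of special secant planes, i.e.\ by the presence of $\mathfrak{g}^r_d$'s of Clifford index $\leq p$, which our hypothesis $p<\mathrm{Cliff}(C)$ rules out.

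The principal obstacle, and the reason Green's Conjecture remains open in full generality, is the passage from the generic curve in each Clifford-index stratum to an arbitrary curve of the same Clifford index. Koszul cohomology is semicontinuous only in the unfavorable direction, so a generic vanishing does not specialize; conversely, special loci such as Brill-Noether degenerate curves, bielliptic curves, and curves lying only on Enriques or Prym-type ambient surfaces demand alternative geometric models to which the Lefschetz reduction can still be applied. Green's Conjecture has been settled in many such cases by dedicated techniques (Voisin for generic curves, Aprodu-Farkas via maximal gonality, Hirschowitz-Ramanan for curves with unique pencils, among others), but welding these stratum-specific arguments into a single proof that covers every smooth curve is where the genuine difficulty lies.
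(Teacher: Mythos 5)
The statement you are asked to prove is Conjecture \ref{greenconj} itself: it is an \emph{open conjecture}, and the paper contains no proof of it. What the paper does establish is only the forward implication, namely that $K_{p,2}(C,\omega_C)\neq 0$ once $p\geq \mathrm{Cliff}(C)$ (Proposition \ref{greentriv}, via the Green--Lazarsfeld Non-Vanishing Theorem \ref{glnonvan} applied to decompositions $\omega_C=L\otimes(\omega_C\otimes L^{\vee})$, together with the base-point trick to realize every value $p\geq \mathrm{Cliff}(C)$). Your treatment of that half is in the right spirit, if loosely phrased. Everything else in the paper on the converse --- Voisin's Theorems \ref{vois1} and \ref{vois2}, Hirschowitz--Ramanan (Theorem \ref{hirr}), Aprodu (Theorem \ref{apr}), Aprodu--Farkas (Theorem \ref{aprf}) --- is a collection of partial results, and the paper states explicitly that the conjecture in full generality remains open.

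Your proposed proof of the vanishing direction has a concrete fatal step: you begin by embedding \emph{the given} curve $C$ as a hyperplane section of a K3 surface with $\mathrm{Pic}(S)=\mathbb Z\cdot H$. This is impossible for an arbitrary smooth curve --- such K3 sections form a $19+g$-dimensional family dominating only a locus of $\mathcal{M}_g$ (and only for small $g$ or $g=11$ by Mukai), and moreover any curve lying on such an $S$ automatically has maximal Clifford index $\lfloor(g-1)/2\rfloor$, so the strategy cannot even see curves of non-maximal Clifford index. Voisin's argument runs the other way: one exhibits a \emph{single} curve with the desired vanishing and invokes semicontinuity to conclude for the generic curve; this yields nothing for a specific special curve, since Koszul vanishing does not specialize. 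Your final paragraph correctly identifies exactly this as the unresolved difficulty, which means the proposal is, by its own admission, a survey of known strategies rather than a proof. That is an accurate reflection of the state of the art, but it is not a demonstration of the statement.
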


\vskip 3pt

In light of Proposition \ref{greentriv}, the non-trivial part of Green's Conjecture is the establishing of the following vanishing:
$$K_{p,2}(C,\omega_C)=0, \ \mbox{ for all } p<\mathrm{Cliff}(C).$$
The appeal of Green's Conjecture lies in the fact that it allows one to read off the complexity of the curve (in the guise of its Clifford index), from
the equations of its canonical embedding. The Clifford index is simply the order of the first non-linear syzygy of the canonical curve $C\subseteq \PP^{g-1}$.

\vskip 4pt

Spelling out the conclusions of Conjecture \ref{greenconj}, it follows that if one denotes by $c:=\mbox{Cliff}(C)$, then the resolution of
every canonical curve of genus $g$ should have the following form:

\begin{table}[htp!]
\begin{center}
\begin{tabular}{|c|c|c|c|c|c|c|c|c|c|c|}
\hline
$0$ & $1$  & $\ldots$ & $c-1$ & $c$ & $\ldots$ & $g-c-2$ & $g-c-1$  & $\ldots$ & $g-3$ & $g-2$\\
\hline
$1$ & $0$ & $\ldots$ & $0$ & 0& $\ldots$ & 0 & 0 & $\ldots$ & 0 & 0\\
\hline
$0$ & $b_{1,1}$ & $\ldots$ & $b_{c-1,1}$ & $b_{c,1}$ & $\ldots$ & $b_{g-c-2,1}$ &  $0$& $\ldots$ & 0 & 0\\
\hline
$0$ &  $0$ & $\ldots$ & $0$ & $b_{c,2}$ &$\ldots$  &$b_{g-c-2,2}$ & $b_{g-c-1,2}$ & $\ldots$ & $b_{g-2,2}$ & 0 \\
\hline
$0$ & $0$ & $\ldots$ & $0$ & 0& $\ldots$ & 0 & 0  &  $\ldots$ & 0 & 1\\
\hline
\end{tabular}
\end{center}
    \caption{The Betti table of a canonical curve of genus $g$ and Clifford index $c$}
\end{table}

The content of Conjecture \ref{greenconj} is that $b_{j,2}=0$ for $j<c$. This can be reduced to a single vanishing, namely $b_{c-1,2}=0$.

\vskip 4pt

It has contributed to the appeal of Green's Conjecture that its first cases specialize to famous classical results in the theory of algebraic curves. When $p=0$, Conjecture
\ref{greenconj} predicts that a non-hyperelliptic canonical curve $C\subseteq \PP^{g-1}$ is projectively normal, that is, all multiplication maps
$$\mbox{Sym}^k H^0(C,\omega_C)\rightarrow H^0(C,\omega_C^{\otimes k})$$
are surjective. This is precisely the content of Max Noether's Theorem, see \cite{ACGH} page 117.

\vskip 3pt

When $p=1$, Conjecture \ref{greenconj} predicts that as long as $\mbox{Cliff}(C)>1$ (that is, $C$ is neither trigonal nor a smooth plane quintic, when necessarily $g=6$), one has
$K_{1,2}(C,\omega_C)= 0$, that is, the ideal of the canonical curve is generated by quadrics. This is the content of the Enriques-Babbage Theorem, see \cite{ACGH} page 124.

\vskip 5pt

The first non-classical case of Green's Conjecture is $p=2$, when one has to show that if $C$ is not tetragonal, then $K_{2,2}(C,\omega_C)=0$. This has been
established almost simultaneously, using vector bundle methods by Voisin \cite{V1} and using Gr\"obner basis techniques by Schreyer \cite{Sch2}. Even before that, the conjecture has been
proved for all $g\leq 8$ in \cite{Sch2}, in a paper in which many techniques that would ultimately play a major role in the study of syzygies have been introduced.

\vskip 4pt

\subsection{The resolution of a general canonical curve.}
Assume now that $C$ is a general canonical curve of odd genus $g=2i+3$. Then $\mbox{Cliff}(C)=i+1$ and in fact $C$ has a one dimensional
family $W^1_{i+3}(C)$ of pencils of degree $i+3$ computing the Clifford index. The predicted resolution of the canonical image of $C$ has the
following shape, where this time we retain only the linear and quadratic strands:

\begin{table}[htp!]
\begin{center}
\begin{tabular}{|c|c|c|c|c|c|c|c|c|}
\hline
$1$ & $2$ & $\ldots$ & $i-1$ & $i$ & $i+1$ & $i+2$  & $\ldots$ & $2i$\\
\hline
$b_{1,1}$ & $b_{2,1}$ & $\ldots$ & $b_{i-1,1}$ & $b_{i,1}$ & 0 & 0 &  $\ldots$ & 0 \\
\hline
$0$ &  $0$ & $\ldots$ & $0$ & $0$ & $b_{i+1,2}$ & $b_{i+2,2}$ & $\ldots$ & $b_{2i,2}$\\
\hline
\end{tabular}
\end{center}
    \caption{The Betti table of a general canonical curve of genus $g=2i+3$}
\end{table}
Furthermore, since in each diagonal of the Betti table precisely one entry is non-zero, we can explicitly compute all the Betti numbers
and we find:
$$b_{p,1}(C,\omega_C)=\frac{(2i+2-p)(2i-2p+2)}{p+1}{2i+2\choose p-1}, \ \ \mbox{ for } p\leq i,$$
and
$$b_{p,2}(C,\omega_C)=\frac{(2i+1-p)(2p-2i)}{p+2}{2i+2\choose p}, \ \ \mbox{ for } 2i\geq p\geq i+1.$$
Quite remarkably, in this case Green's Conjecture predicts not only which Betti numbers vanish, but also their precise value.

\vskip 4pt

Let us now move to a general curve $C$ of even genus $g=2i+2$. In this case $\mbox{Cliff}(C)=i$. This Clifford index is computed by one of the finitely many pencils
of minimal degree $i+1$. The predicted resolution of the canonical model of $C$ has the following shape:

\begin{table}[htp!]
\begin{center}
\begin{tabular}{|c|c|c|c|c|c|c|c|c|}
\hline
$1$ & $2$ & $\ldots$ & $i-1$ & $i$ & $i+1$ & $i+2$  & $\ldots$ & $2i-1$\\
\hline
$b_{1,1}$ & $b_{2,1}$ & $\ldots$ & $b_{i-1,1}$ & $b_{i,1}$ & 0 & 0 &  $\ldots$ & 0 \\
\hline
$0$ &  $0$ & $\ldots$ & $0$ & $b_{i,2}$ & $b_{i+1,2}$ & $b_{i+2,2}$ & $\ldots$ & $b_{2i-1,2}$\\
\hline
\end{tabular}
\end{center}
    \caption{The Betti table of a general canonical curve of genus $g=2i+2$}
\end{table}
Note that also in this case, in each diagonal of the Betti table precisely one entry is non-zero, which allows us to
determine the entire resolution. We find:
$$b_{p,1}(C,\omega_C)=\frac{(2i-p+1)(2i-2p+1)}{p+1}{2i+1\choose p-1}, \ \ \mbox{ for } p\leq i,$$
and

$$b_{p,2}(C,\omega_C)=\frac{(2i-p)(2p-2i+1)}{p+2}{2i+1\choose p}, \ \ \ \mbox{ for }2i-1\geq p\geq i.$$

There is a qualitative difference between the resolution of a general canonical curve of odd or even genus respectively.
In the former case the resolution is \emph{pure}, that is, at each step there are only syzygies of one given degree. In the latter case, the resolution
is not pure in the middle, for there exist both linear and quadratic syzygies of order $i$.

\subsection{The resolution of canonical curves of small genus.}
To familiarize ourselves with the content of Green's Conjecture, we concentrate on small genus and we begin with the case $g=5$. In this case we distinguish two cases, depending on whether
the curve $C$ is trigonal or not. Note that the trigonal locus $\cM_{5,3}^1$ is an effective divisor on $\cM_5$.

\begin{table}[htp!]
\begin{center}
\begin{tabular}{|c|c|c|c|}
\hline
$1$ &  &  & \\
\hline
 & $3$ &  &  \\
\hline
 & & 3& \\
 \hline
 & & & 1\\
\hline
\end{tabular}
\end{center}
    \caption{The Betti table of a non-trigonal curve of genus $g=5$}
\end{table}

From the Enriques-Babbage Theorem, if $C$ is not trigonal, then it is a complete intersection of three quadrics $Q_1,Q_2, Q_3\subseteq \PP^4$.
The resolution of $C$ has the form:
$$0\longleftarrow \Gamma_C(\omega_C)\longleftarrow S\longleftarrow S(-2)^{\oplus 3}\longleftarrow S(-4)^{\oplus 3}\longleftarrow S(-5)\longleftarrow 0.$$
The syzygies between the three quadrics are quadratic and of the trivial type $Q_i\cdot Q_j-Q_j\cdot Q_i$, for $i\neq j$.

\vskip 4pt

Assume now that $C$ is trigonal. Then it turns out that the resolution has the following shape:

\begin{table}[htp!]
\begin{center}
\begin{tabular}{|c|c|c|c|}
\hline
$1$ &  &  & \\
\hline
 & $3$ & 2 &  \\
\hline
 &2 & 3& \\
 \hline
 & & & 1\\
\hline
\end{tabular}
\end{center}
    \caption{The Betti table of a trigonal curve of genus $g=5$}
\end{table}

\vskip 4pt

The interesting feature of this table is that $b_{1,2}(C,\omega_C)=b_{2,1}(C,\omega_C)=2$. To give a geometric explanation of this fact, we recall that
the linear system on $C$ residual to the degree $3$ pencil, realizes $C$ as a plane quintic with one node. Concretely, let $X:=\mbox{Bl}_q(\PP^2)$ be the Hirzebruch surface $\mathbb F_1$
and denote by $h\in \mbox{Pic}(X)$ the pull-back of the line class and by $E\in \mbox{Pic}(X)$ the exceptional divisor respectively. The image of $X$ under the linear system $|2h-E|$
realizes $X\subseteq \PP^4$ as a cubic scroll (Observe that $(2h-E)^2=3$). As we already mentioned  $C\equiv 5h-2E\in \mbox{Pic}(X)$. From the adjunction formula, $K_C\equiv \OO_C(1)$, that is,
we have the following inclusions for the canonical curve
of genus $5$:
$$C\subseteq X\subseteq \PP^4.$$

\vskip 5pt

From the Enriques-Babbage Theorem it follows that the intersection of the three quadrics containing $C$ is precisely the cubic scroll $X$, that is $I_C(2)\cong I_X(2)$. It follows
that one  has an inclusion $K_{2,1}(X,\OO_X(1))\subseteq K_{2,1}(C,\omega_C)$.
Note that
$$K_{2,1}(X,\OO_X(1))=\mbox{Ker}\Bigl\{ I_X(2)\otimes H^0(X,\OO_X(1))\rightarrow I_X(3)\Bigr\}.$$ By direct calculation we find that $\mbox{dim } I_X(3)=13$. Since
$\mbox{dim } I_X(2)=3$, it follows that $b_{2,1}(X,\OO_X(1))\geq 2$, therefore $b_{2,1}(C,\omega_C)\geq 2$. It can now be easily showed that one actually has equality, that is, $b_{2,1}(C,\omega_C)
=2$.

\vskip 3pt
To summarize, we have a \emph{set-theoretic} equality of divisors on $\cM_5$:
$$\Bigl\{[C]\in \cM_5: K_{2,1}(C,\omega_C)\neq 0\Bigr\}= \cM_{5,3}^1.$$

\vskip 5pt

Let us now move on to the case of curves of genus $7$. In some sense this is the first non-trivial case, for $7$ is the first genus when a general canonical curve is no
longer a complete intersection. If $C$ is not $4$-gonal, Green's Conjecture predicts that $b_{2,2}(C,\omega_C)=b_{3,1}(C,\omega_C)=0$, and $C$ admits the following resolution:
\begin{table}[htp!]
\begin{center}
\begin{tabular}{|c|c|c|c|c|c|}
\hline
$1$ &  &  & & &\\
\hline
& $10$ & 16 & & &\\
\hline
 & & & 16& 10 & \\
 \hline
 & & & & & 1\\
\hline
\end{tabular}
\end{center}
    \caption{The Betti table of a non-tetragonal curve of genus $g=7$}
\end{table}

Here $b_{1,1}(C,\omega_C)=\mbox{dim } I_C(2)=10$ and since $\mbox{dim } I_C(3)={9\choose 3}-5(g-1)=54$, we find that
$$b_{2,1}(C,\omega_C)=\mbox{dim } \mbox{ Ker} \Bigl\{ I_C(2)\otimes H^0(C,\omega_C)\rightarrow I_C(3)\Bigr\}=16.$$

Assume now that $C$ is a general tetragonal curve of genus $7$, in particular $W^2_6(C)=\emptyset$. The fibres of the degree $4$ pencil on $C$
span a $3$-dimensional scroll $X\subseteq \PP^6$. The resolution of $C$ has the following form:
\begin{table}[htp!]
\begin{center}
\begin{tabular}{|c|c|c|c|c|c|}
\hline
$1$ &  &  & & &\\
\hline
& $10$ & 16 & 3 & &\\
\hline
 & & 3& 16& 10 & \\
 \hline
 & & & & & 1\\
\hline
\end{tabular}
\end{center}
    \caption{The Betti table of a general tetragonal curve of genus $g=7$}
\end{table}
The novelty compared to the previous case is that $b_{3,1}(C,\omega_C)=b_{2,2}(C,\omega_C)=3$. All these syzygies are induced from the $3$-dimensional scroll $X$.
Using the Eagon-Northcott complex \cite{Sch1} Section 1, one can easily show that
$b_{3,1}(X,\OO_X(1))=3.$

\vskip 4pt

The last case we treat is that when $W^2_6(C)\neq 0$, that is, $C$ admits a degree $6$ plane model. The subvariety
$$\cM_{7,6}^2:=\Bigl\{[C]\in \cM_7: W^2_6(C)\neq \emptyset\Bigr\}$$ is
an irreducible codimension $2$ subvariety of $\cM_7$. A general element $[C]\in \cM_{7,6}^2$ corresponds to a plane sextic curve with $3$ nodes. The lines
through any of these nodes induces a pencil of degree $4$ on $C$, that is, such a curve $C$ has three pencils of degree $4$. In fact, these are the only pencils
of minimal degree on $C$. The resolution of the canonical curve $C\subseteq \PP^6$ has the following shape:
\begin{table}[htp!]
\begin{center}
\begin{tabular}{|c|c|c|c|c|c|}
\hline
$1$ &  &  & & &\\
\hline
& $10$ & 16 & 9 & &\\
\hline
 & & 9& 16& 10 & \\
 \hline
 & & & & & 1\\
\hline
\end{tabular}
\end{center}
    \caption{The Betti table of a general plane sextic of genus $g=7$}
\end{table}

Intuitively, one explains the value $b_{3,1}(C,\omega_C)=9$, by referring to the previous case, of curves having a single $\mathfrak g^1_4$. Each such pencil gives a contribution
of $3$ syzygies to the Koszul cohomology group $K_{3,1}(C,\omega_C)$ and it turns out that the $3$ pencils on a general curve $[C]\in \cM_{7,6}^2$ contribute independently, leading
to $9$ independent syzygies.

\vskip 4pt

The previous two Betti tables illustrate vividly the limitations of Green's Conjecture, for they have the same shape since they correspond to curves of the same Clifford index,
yet they have different values, depending on the number of minimal pencils of the curve in question.

\vskip 3pt

\subsection{Voisin's proof of the Generic Green Conjecture.}

Without a doubt, the most important work on Green's Conjecture is Voisin's proof \cite{V3}, \cite{V4} of Green's Conjecture for generic curves of even and odd genus
respectively. Her key idea is to specialize to curves lying on a $K3$ surface and interpret the Koszul cohomology groups in question as cohomology groups
of certain tautological vector bundles on the Hilbert scheme classifying $0$-dimensional subschemes of the ambient $K3$ surface. This novel approach to Koszul cohomology also bore fruit later
(though replacing the $K3$ surface with the symmetric product of the curve)
in the sensationally simple proof of the \emph{Gonality Conjecture} by Ein and Lazarsfeld \cite{EL}, which however is not a subject of these lectures.

\vskip 4pt

We start with the case of curves of even genus  $g=2i+2$. Therefore a generic curve $[C]\in \cM_g$ has  $\mbox{Cliff}(C)=i$. Due to the semicontinuity of dimensions of Koszul cohomology
groups in flat families, in order to prove the generic Green Conjecture it suffices to exhibit a single smooth curve $C$ of \emph{maximal} Clifford index which satisfies the two equivalent vanishing statements
$$K_{i-1,2}(C,\omega_C)=0\Longleftrightarrow K_{i+1,1}(C,\omega_C)=0.$$
We now quote the main result from \cite{V3}. Explaining the proof would take us too far afield and we refer to \cite{AN} Section 6.3 for a good summary of the main points in Voisin's proof.

\begin{thm}\label{vois1}
Let $(S,H)$ be a polarized $K3$ surface with $H^2=4i+2$ and $\mathrm{Pic}(S)=\mathbb Z\cdot H$. Then
$$K_{i-1,2}(S,H)=0.$$
\end{thm}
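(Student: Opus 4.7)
The plan is to follow Voisin's strategy, reducing the question to a cohomological computation on the Hilbert scheme $S^{[i+1]}$ of length-$(i+1)$ subschemes of $S$. The initial reduction uses Proposition \ref{kernelbundles}: since $H$ is ample on a K3, Kodaira vanishing gives $H^j(S, H^{\otimes q}) = 0$ for all $j,q \geq 1$, so the right-hand map in the kernel description collapses and
$$K_{i-1,2}(S,H) \cong H^1\bigl(S, \bigwedge^i M_H \otimes H\bigr),$$
reducing the theorem to this single sheaf-cohomological vanishing. Working on $S$ itself rather than on a smooth $C \in |H|$ (where Theorem \ref{lef} would yield the same group) is advantageous because the Hilbert scheme $S^{[i+1]}$ is a $2(i+1)$-dimensional holomorphic symplectic manifold carrying computable tautological bundles.

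Next I would translate the required $H^1$-vanishing into a statement on $S^{[i+1]}$. Let $\Xi \subset S \times S^{[i+1]}$ be the universal subscheme with projections $p$ to $S$ and $q$ to $S^{[i+1]}$, and set $H^{[i+1]} := q_*(p^*H|_{\Xi})$, a tautological bundle of rank $i+1$. The evaluation map
$$\mathrm{ev} : H^0(S,H) \otimes \OO_{S^{[i+1]}} \to H^{[i+1]}$$
globalizes the Lazarsfeld sequence defining $M_H$, and pushing forward a suitable Koszul-type complex on $\Xi$ identifies $H^1(S, \bigwedge^i M_H \otimes H)$ with the cokernel (or a closely related kernel) of a natural comparison map between spaces of global sections of tautological bundles on $S^{[i+1]}$. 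The desired vanishing thus becomes the surjectivity of an explicit evaluation-type map on the Hilbert scheme.

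The main obstacle is this last step: establishing that surjectivity using the hypothesis $\mathrm{Pic}(S) = \mathbb{Z} \cdot H$. By Lazarsfeld's classical theorem on curves on K3s, this assumption forces every smooth $C \in |H|$ to be Brill-Noether general, so that $\mathrm{Cliff}(C) = i$; geometrically, no length-$(i+1)$ subscheme $\xi \subset S$ fails to impose the expected $i+1$ conditions on $|H|$ for reasons coming from a hidden sub-linear system. The Hilbert-scheme computation then proceeds by stratifying $S^{[i+1]}$ according to the rank of $\mathrm{ev}$, matching the putative nonzero contributions against the known geometric sources of syzygies (secants to the canonical image of a curve $C \in |H|$), and invoking the triviality of $\omega_{S^{[i+1]}}$ together with Serre duality on the Hilbert scheme to bound the obstruction. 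The delicate matching of Hilbert-scheme combinatorics with the geometry of subschemes concentrated on a few points of $S$ is the step I expect to be hardest, and is precisely where Voisin's key technical innovations in \cite{V3} are concentrated.
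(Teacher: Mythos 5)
Your opening reduction is correct: with $p=i-1$, $q=2$, Proposition \ref{kernelbundles} together with $H^1(S,H)=0$ does give $K_{i-1,2}(S,H)\cong H^1\bigl(S,\bigwedge^i M_H\otimes H\bigr)$, and the decision to work on $S$ rather than on a hyperplane section, so as to bring the Hilbert scheme of points into play, is indeed the right opening move. But from that point on what you have written is a description of a strategy, not a proof. The entire mathematical content of the theorem is concentrated in the step you label "the main obstacle": identifying the $H^1$ with the (co)kernel of an explicit evaluation map on a tautological bundle over the Hilbert scheme (or rather over the incidence variety $\Xi\subset S\times S^{[n]}$), and then proving that map surjective. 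You do not carry out either half. The proposed mechanism for the second half --- stratifying $S^{[i+1]}$ by the rank of $\mathrm{ev}$ and "invoking the triviality of $\omega_{S^{[i+1]}}$ together with Serre duality \ldots to bound the obstruction" --- is not an argument that can be checked or completed as stated; it is not how the surjectivity is actually established (Voisin's proof proceeds via the residual fibration $\Xi_{n}\to S^{[n-1]}$, a delicate filtration of the relevant sheaves along it, and a separate analysis of the punctual, non-curvilinear loci), and nothing in your sketch explains where the hypothesis $H^2=4i+2$ (as opposed to some other degree) enters the vanishing. Explicitly deferring the hardest step to "Voisin's key technical innovations in \cite{V3}" concedes that the proof is absent.

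For calibration: the paper itself does not prove this theorem either. It is quoted verbatim from \cite{V3}, with the remark that explaining the proof "would take us too far afield," and the reader is sent to \cite{AN}, Section 6.3 for a summary. So there is no argument in the paper against which to measure your treatment of the core step; but by the same token your proposal, which reproduces the publicly known first reduction and the general shape of Voisin's method while omitting its substance, cannot be counted as a proof of the statement. If you want a genuinely checkable contribution here, the realistic target is the precise identification of $H^1\bigl(S,\bigwedge^i M_H\otimes H\bigr)$ with a cohomology group on the incidence variety in $S\times S^{[n]}$ (including getting the correct $n$ and the correct tautological sheaf), stated and proved as a lemma; the surjectivity itself should be cited honestly as Voisin's theorem rather than sketched.
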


Theorem \ref{vois1} immediately leads to a proof of Green's Conjecture for generic curves of even genus. Indeed, first we notice that a general
$C\in |H|$ is Brill-Noether-Petri general due to Lazarsfeld's result \cite{La1}. In
particular, $C$ has the maximal Clifford index, that is, $\mbox{Cliff}(C)=i$.
On the other hand, by the already mentioned Lefschetz Hyperplane Principle, one has
$$K_{i-1,2}(C,\omega_C)\cong K_{i-1,2}(S,H),$$
thus Theorem \ref{vois1} provides the vanishing required by Green's Conjecture.

\vskip 5pt

The case of generic curves of odd genus $g=2i+3$ is treated in \cite{V4}. The strategy is to specialize again to a curve $C$ lying on a $K3$ surface $S$, but this case is
harder because of an extra difficulty. A general curve of genus $2i+3$ has a one dimensional family of minimal pencils
of minimal degree $i+3$. This is in  contrast with the even genus case.  A general curve $C\in |H|$ as in Theorem \ref{vois1}, where $g(C)=2i+2$,  has a finite number of minimal pencils $A\in W^1_{i+2}(C)$.
Each such pencil induces a rank $2$ \emph{Lazarsfeld-Mukai} vector bundle $F_{A}$ on $S$, defined by an elementary transformation on the surface $S$:
$$0\longrightarrow F_A\longrightarrow H^0(C,A)\otimes \OO_S\longrightarrow A\longrightarrow 0.$$
The geometry of the bundle $F_A$ is essential in the proof of Theorem \ref{vois1} and, crucially, $F_A$ does \emph{not} depend on the choice of $A\in W^1_{i+2}(C)$, so that it is an object one can canonically attach
to the curve $C\subseteq S$. This feature no longer holds true
in odd genus, and in order to circumvent this difficulty, Voisin uses a more special $K3$ surface instead:

\begin{thm}\label{vois2}
Let $S$ be a smooth $K3$ surface such that $\mathrm{Pic}(S)\cong \mathbb Z\cdot C\oplus \mathbb Z\cdot \Delta$, where $\Delta$ is a smooth rational curve with $\Delta^2=-2$ and $C$ is
a smooth curve of genus $g=2i+3$, such that $C\cdot \Delta=2$.
Then the following hold:
$$K_{i,2}(S,C+\Delta)=0 \ \mbox{ and } \  K_{i,2}(S,C)=0.$$
\end{thm}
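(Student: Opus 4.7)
The plan is to adapt the Hilbert-scheme approach that Voisin used in the even genus case (Theorem \ref{vois1}), exploiting the distinguished role of the $(-2)$-curve $\Delta$ in the Picard lattice. By the Lefschetz hyperplane principle (Theorem \ref{lef}), for a smooth $C' \in |C+\Delta|$ one has isomorphisms
\[
K_{i,2}(S,C+\Delta) \cong K_{i,2}(C',\omega_{C'}), \qquad K_{i,2}(S,C) \cong K_{i,2}(C,\omega_C),
\]
and adjunction gives $g(C) = 2i+3$ and $g(C') = 2i+4$. So the first statement is an even-genus Green vanishing and the second is an odd-genus one. One cannot simply invoke Theorem \ref{vois1} for the first vanishing, because that result requires $\mathrm{Pic}(S)$ cyclic, which fails here; both vanishings must be handled together, and the non-cyclic Picard structure is a feature, not a bug.

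Via Proposition \ref{kernelbundles} each of the two Koszul groups is rewritten as the cohomology of a twist of an exterior power of the Lazarsfeld bundle $M_L$ on $S$, and these are then globalized, following Voisin, to cohomology of tautological bundles on the Hilbert scheme $S^{[i+1]}$ of $i+1$ points on $S$. A non-zero class in $K_{i,2}(S,L)$ corresponds geometrically to a length-$(i+1)$ subscheme $\xi \subseteq S$ whose image in $\mathbb{P}H^0(S,L)^{\vee}$ fails to be in linearly general position, equivalently to a minimal pencil on a curve in $|L|$ through $\xi$ and to a rank-$2$ Lazarsfeld-Mukai bundle $F$ on $S$. In the even-genus, cyclic Picard situation the family of such pencils is finite and $F$ is canonically attached to $C'$; in the odd-genus case the analogous family $W^1_{i+3}(C')$ is one-dimensional, which is precisely the obstacle to be overcome.

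The whole point of enlarging the Picard lattice by $\Delta$ is to single out a distinguished element of that one-parameter family. Since $C \cdot \Delta = 2$, residuation of $\omega_{C'}$ by the two-point divisor $\Delta \cap C'$ produces a canonical pencil of degree $i+3$ on each smooth $C' \in |C+\Delta|$, coming from the surface rather than from the curve, and this canonical pencil plays the role of $F_A$ from the even-genus argument. One then proceeds as in Voisin's Hilbert-scheme analysis: express $K_{i,2}(S,C+\Delta)$ as a cokernel of a map between tautological cohomology groups on $S^{[i+1]}$, produce the syzygies forced by the $\Delta$-induced pencil together with those forced by Green-Lazarsfeld non-vanishing (Theorem \ref{glnonvan}), and show that these exhaust the potential obstruction. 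The second vanishing $K_{i,2}(S,C)=0$ is then deduced either by a parallel analysis on the Hilbert scheme associated to $C$, or by degenerating $C' \in |C+\Delta|$ to the nodal curve $C \cup \Delta$ and splitting off the contribution of the smooth rational tail $\Delta$ using the long exact sequence \eqref{lonseq}, the contribution of $\Delta$ being controlled by its low degree.

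The hardest step will be the Hilbert-scheme dimension count: one must verify that no Koszul classes survive beyond those produced by the $\Delta$-pencil and by Green-Lazarsfeld non-vanishing. This reduces to computing the kernel of a concrete evaluation map on $S^{[i+1]}$, and the Picard rank two hypothesis enters essentially, pinning down the intersection numbers that rigidify that kernel. A subsidiary but genuinely delicate point is to verify that $C$ and $C'$ remain Brill-Noether-Petri general despite $\mathrm{Pic}(S)$ being non-cyclic, so that the expected Clifford indices $\mathrm{Cliff}(C) = i+1$ and $\mathrm{Cliff}(C') = i+1$ are attained; this refines the cyclic-Picard argument of \cite{La1} and is what guarantees that the syzygies produced by the distinguished $\Delta$-pencil are, up to Green-Lazarsfeld, the only ones.
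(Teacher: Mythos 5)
Your overall frame is right: the first vanishing is the even-genus generic Green statement for the class $C+\Delta$ of arithmetic genus $2i+4$, the second is the odd-genus one for $C$, the cyclic-Picard hypothesis of Theorem \ref{vois1} fails here on purpose, and the obstacle in odd genus is that $W^1_{i+3}(C)$ is one-dimensional so no Lazarsfeld--Mukai bundle is canonically attached to $C$. But the mechanism you propose for exploiting $\Delta$ does not work as stated. First, $(C+\Delta)\cdot\Delta = C\cdot\Delta+\Delta^2 = 0$, so a general smooth member $C'\in|C+\Delta|$ is \emph{disjoint} from $\Delta$ (the system $|C+\Delta|$ contracts $\Delta$); there is no two-point divisor $\Delta\cap C'$ to residuate by. Second, even formally, $\omega_{C'}$ minus two points has degree $4i+4$, not $i+3$, so no ``canonical pencil of degree $i+3$'' arises this way. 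The actual role of $\Delta$ is to furnish the degree-two divisor $x+y=\Delta\cap C$ on the \emph{odd}-genus curve $C$ itself, giving $(C+\Delta)|_C\cong \omega_C(x+y)$; it is this relation between the two polarizations, not a distinguished pencil on $C'$, that links the two vanishings.

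The second and larger gap is that you dispose of the implication $K_{i,2}(S,C+\Delta)=0\Rightarrow K_{i,2}(S,C)=0$ in one sentence, offering either ``a parallel Hilbert-scheme analysis for $C$'' or a long exact sequence splitting off $\Delta$. The parallel analysis is exactly what is unavailable in odd genus -- that is the obstacle you yourself identified -- and the long exact sequence of type \eqref{lonseq} relating $\Gamma_S(C)$ and $\Gamma_S(C+\Delta)$ introduces mixed Koszul groups such as $K_{p,q}(S,\pm\Delta,\cdot)$ whose vanishing is not ``controlled by the low degree of $\Delta$'' in any formal way; if it were, the theorem would follow from Theorem \ref{vois1} by homological bookkeeping. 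This passage is precisely what the text identifies as the most difficult part of \cite{V4}: one must show that a hypothetical nonzero class in $K_{i,2}(C,\omega_C)$ forces nonvanishing after twisting by \emph{varying} divisors $x+y$, and propagate this over the relevant Hilbert scheme to contradict the genus-$2i+4$ vanishing. As written, your proposal restates the difficulty rather than resolving it; to be a proof it would need, at minimum, a precise statement of which mixed Koszul groups must vanish and an argument for why they do.
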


Note that $C+\Delta$ can be regarded  as a semi-stable curve of genus $2i+4$. The vanishing $K_{i,2}(S,C+\Delta)=0$ is what one would expect from Theorem \ref{vois1}
(although that result has been established
only for $K3$ surfaces of Picard number one, an extension of the argument shows that the statement holds in this case as well). The most difficult part of \cite{V4} is showing how one can pass
from this vanishing to the statement $K_{i,2}(S,C)=0$, which shows that $C\subseteq S$ verifies Green's Conjecture.

\vskip 5pt

\subsection{The result of Hirschowitz and Ramanan.}
We are now going to discuss the beautiful paper \cite{HR}, which predates Voisin's papers \cite{V3},\cite{V4}. Although at the time the results of Hirschowitz and Ramanan were conditional, they provided substantial evidence for Green's Conjecture. Once the Generic Green Conjecture became a theorem, the results in \cite{HR} could be used effectively to extend the range of validity for Green's Conjecture for various classes of non-generic curves.

\vskip 3pt

We fix an odd genus $g=2i+3$ and observe that the Hurwitz locus
$$\cM_{g,i+2}^1:=\Bigl\{[C]\in \cM_g: \exists C\stackrel{i+2:1}\longrightarrow \PP^1\Bigr\}$$
is an irreducible divisor on $\cM_g$. This divisor has been studied in detail by Harris and Mumford \cite{HM} in their course of proving that
$\mm_g$ is of general type for large odd genus. In particular, they determined the class of the closure $\mm_{g,i+2}^1$ in $\mm_g$ of the Hurwitz divisor in terms
of the standard generators of $\mbox{Pic}(\mm_g)$.

\vskip 5pt

On the other hand, one can consider the Koszul divisor
$$\mathfrak{Kosz}_g:=\Bigl\{[C]\in \cM_g: K_{\frac{g-3}{2},2}(C,\omega_C)\neq 0\Bigr\}.$$

We have already explained that in genus $3$, the Koszul divisor is the degeneracy locus of the map of vector bundles over $\cM_3$, globalizing the morphisms
$$\mbox{Sym}^2 H^0(C,\omega_C)\longrightarrow H^0(C,\omega_C^{2}).$$
Max Noether's Theorem, that is, Green's Conjecture in genus $3$, implies  the already discussed set-theoretic equality $\mathfrak{Kosz}_3=\cM_{3,2}^1$. We have also explained
how in genus $5$, we have the following description of the Koszul divisor
$$\mathfrak{Kosz}_5:=\Bigl\{[C]\in \cM_5: I_C(2)\otimes H^0(C,\omega_C)\stackrel{\ncong}\longrightarrow  I_C(3)\Bigr\}.$$

It is relatively standard to show that for any odd genus $\mathfrak{Kosz}_g$ is a virtual divisor, that is, the degeneracy locus of a map between vector bundles of the same
rank over $\cM_g$. Precisely, one can show that $[C]\in \mathfrak{Kosz}_g$ if and only if the restriction map
$$H^0\Bigl(\PP^{g-1}, \bigwedge^i M_{\PP^{g-1}}(2)\Bigr) \longrightarrow H^0\Bigl(C,\bigwedge^i M_{\omega_C}\otimes \omega_C^{2}\Bigr)$$
is not an isomorphism. Both vector spaces appearing above have the same dimension, independent of $C$. Therefore either $\mathfrak{Kosz}_g$ is a genuine divisor on $\cM_g$, or else, $\mathfrak{Kosz}_g=\cM_g$. Voisin's result \cite{V4} rules out
the second possibility, for $K_{i,2}(C,\omega_C)=0$, for a general curve $[C]\in \cM_g$.

\vskip 4pt

Hirschowitz and Ramanan generalized to arbitrary odd genus the equalities of cycles in moduli already discussed in small genus. Putting together their work with that of Voisin \cite{V4}, we obtain the following result,
present in a slightly revisionist fashion, for as we mentioned, \cite{HR} predates the papers \cite{V3} and \cite{V4}:

\begin{thm}\label{hirr}
For odd genus $g=2i+3$, one has the following equality of effective divisors:
 $$[\mathfrak{Kosz}_{g}]=(i+1)[\cM_{g,i+2}^1]\in \mathrm{Pic}(\cM_{2i+3}).$$
It follows that Green's Conjecture holds for every smooth curve of genus $g$ and maximal Clifford index  $i+1$. Equivalently, the following equivalence holds
for any smooth curve $C$ of genus $g$:
$$K_{i,2}(C,\omega_C)\neq 0\Longleftrightarrow \mathrm{Cliff}(C)\leq i.$$
\end{thm}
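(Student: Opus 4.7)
The plan is to establish the divisorial equality $[\mathfrak{Kosz}_g] = (i+1)[\cM_{g,i+2}^1]$ in $\mathrm{Pic}(\mm_g)$, and then to deduce Green's Conjecture for curves of maximal Clifford index as a consequence.

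The first step is the set-theoretic inclusion $\cM_{g,i+2}^1 \seq \mathfrak{Kosz}_g$. Given $[C] \in \cM_{g,i+2}^1$, fix $A \in W^1_{i+2}(C)$. A direct Riemann--Roch calculation combined with Serre duality gives $h^0(C,\omega_C\otimes A^{\vee}) = i+2$, so $r(\omega_C\otimes A^{\vee}) = i+1$. Writing $\omega_C = A \otimes (\omega_C \otimes A^{\vee})$ and applying the Green--Lazarsfeld Non-Vanishing Theorem \ref{glnonvan} with $r_1 = 1$ and $r_2 = i+1$ yields $K_{i+1,1}(C,\omega_C)\neq 0$. The canonical Koszul duality $K_{p,q}(C,\omega_C)\cong K_{g-p-2,3-q}(C,\omega_C)^{\vee}$ then gives $K_{i,2}(C,\omega_C)\neq 0$, that is, $[C] \in \mathfrak{Kosz}_g$.

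The second step is the class computation and comparison. Using the description of $\mathfrak{Kosz}_g$ as the degeneracy locus of the map
\[ \Phi: H^0\Bigl(\PP^{g-1}, \bigwedge^i M_{\PP^{g-1}}(2)\Bigr)\otimes \OO_{\cM_g} \longrightarrow H^0\Bigl(C,\bigwedge^i M_{\omega_C}\otimes \omega_C^2\Bigr) \]
between vector bundles of equal rank, one extends the source and target across the boundary of $\mm_g$ and computes $[\overline{\mathfrak{Kosz}}_g] = c_1(\mathrm{target}) - c_1(\mathrm{source})$ via Grothendieck--Riemann--Roch applied to the universal curve. Voisin's Theorem \ref{vois2} guarantees that the resulting class represents a genuine effective divisor and not all of $\mm_g$. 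The Harris--Mumford class $[\overline{\cM}_{g,i+2}^1]$ from \cite{HM} serves as the comparison object. The main obstacle will be pinning down the multiplicity $(i+1)$: this requires either a local analysis of the generic corank of $\Phi$ along $\cM_{g,i+2}^1$ (each pencil in the one-dimensional family $W^1_{i+2}(C)$ should contribute independently to the kernel, as suggested by the small-genus examples of plane sextics in this paper), or an intersection-theoretic verification against sufficiently many test curves—for instance pencils of curves on $K3$ surfaces, where the Lefschetz Hyperplane Principle \ref{lef} together with Voisin's vanishing pins down the restriction of $\overline{\mathfrak{Kosz}}_g$.

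Finally, Green's Conjecture for maximal-Clifford-index curves follows at once: if $\mathrm{Cliff}(C) = i+1$, then $C$ admits no pencil of degree $\le i+2$, hence $[C] \notin \cM_{g,i+2}^1$, and the divisorial equality forces $[C] \notin \mathfrak{Kosz}_g$, that is, $K_{i,2}(C,\omega_C) = 0$. Combining this vanishing with the non-vanishing of Proposition \ref{greentriv} yields the stated equivalence $K_{i,2}(C,\omega_C)\neq 0 \Longleftrightarrow \mathrm{Cliff}(C) \le i$ for every smooth curve of genus $g = 2i+3$.
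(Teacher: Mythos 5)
Your overall route is the one the paper (following Hirschowitz--Ramanan and Voisin) outlines: Green--Lazarsfeld non-vanishing gives the inclusion $\cM^1_{g,i+2}\subseteq \mathfrak{Kosz}_g$, the equal-rank degeneracy-locus description together with Voisin's generic vanishing makes $\mathfrak{Kosz}_g$ a genuine divisor, and the conclusion comes from comparing its class with the Harris--Mumford class of the Hurwitz divisor. Your first step is correct. But the mechanism you propose for the multiplicity $i+1$ --- the crux of the whole argument --- is based on a miscount. For $g=2i+3$ the Brill--Noether number of a $\mathfrak g^1_{i+2}$ is $\rho(g,1,i+2)=-1$; this is precisely why $\cM^1_{g,i+2}$ is a divisor, and a \emph{general} point of it carries only finitely many (generically a single) pencil of degree $i+2$, not a one-dimensional family. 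The one-dimensional family $W^1_{i+3}(C)$ of pencils of degree $i+3$ belongs to the general curve of genus $2i+3$, off the Hurwitz divisor, and plays no role here. So the multiplicity cannot arise from ``independent contributions of the one-dimensional family $W^1_{i+2}(C)$.''

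The correct local input is that the \emph{single} $\mathfrak g^1_{i+2}$ already forces $\dim K_{i,2}(C,\omega_C)\geq i+1$: its fibres sweep out an $(i+1)$-dimensional scroll $X\subseteq\PP^{g-1}$ of minimal degree $i+2$ containing the canonical curve, and the Eagon--Northcott complex gives $b_{i+1,1}(X,\OO_X(1))=i+1$, whence $\dim K_{i+1,1}(C,\omega_C)\geq i+1$ and, by duality, $\dim K_{i,2}(C,\omega_C)\geq i+1$. This is exactly the paper's genus-$7$ tetragonal example, where one $\mathfrak g^1_4$ produces $b_{2,2}=3=i+1$. With this corank bound the determinant of $\Phi$ vanishes to order at least $i+1$ along $\cM^1_{g,i+2}$; writing $\mathfrak{Kosz}_g=m\cdot\cM^1_{g,i+2}+E$ with $m\geq i+1$ and $E$ effective, the class identity in $\mathrm{Pic}(\cM_g)_{\QQ}=\QQ\cdot\lambda$ forces $E=\emptyset$ and $m=i+1$, which is what your final paragraph needs. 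Without the corank bound, the class computation alone does not exclude additional components of $\mathfrak{Kosz}_g$, and the implication ``maximal Clifford index $\Rightarrow K_{i,2}(C,\omega_C)=0$'' would not follow.
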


This idea bearing fruit in \cite{HR}, of treating Green's Conjecture variationally as a moduli question, is highly innovative and has been put to use in other non-trivial contexts,
for instance in \cite{F1}, \cite{F2} or \cite{FK1}.

\vskip 6pt

\subsection{Aprodu's work and other applications.} Theorem \ref{hirr}  singles out an explicit class of curves of odd genus for which Green's Conjecture is known to hold.
It is also rather clear that Theorem \ref{hirr} can be extended to  certain classes of stable curves, for instance to all \emph{irreducible} stable curves of genus $g=2i+3$, for which one still has the equivalence
$$K_{i,2}(C,\omega_C)\neq 0\Longleftrightarrow [C]\in \mm_{g,i+2}^1.$$

Indeed, the definition of $\mathfrak{Kosz}_g$ makes sense for irreducible nodal canonical
curves, whereas Harris and Mumford \cite{HM} constructed a compactification of Hurwitz spaces, and thus in particular of $\cM_{g,i+2}^1$, by means of \emph{admissible covers}.
Using such a degenerate form of Theorem \ref{hirr}, Aprodu \cite{Ap} provided a sufficient Brill-Noether type condition for curves of \emph{any} gonality which implies Green's Conjecture.

\begin{thm}\label{apr}
Let $C$ be a smooth $k$-gonal curve of genus $g\geq 2k-2$. Assume that
\begin{equation}\label{lgc}\mathrm{dim } \ W^1_{g-k+2}(C)=g-2k+2.\end{equation}
 Then $C$ verifies Green's Conjecture.
\end{thm}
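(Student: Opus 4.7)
The plan is to degenerate $C$ to an irreducible nodal canonical curve $\tilde{C}$ of odd arithmetic genus on which the extended Hirschowitz-Ramanan theorem (Theorem \ref{hirr}) delivers the required Koszul vanishing, and then to transfer that vanishing back to $C$. Granting that the hypothesis \eqref{lgc} forces $\mathrm{Cliff}(C)=k-2$, the reduction recorded after Conjecture \ref{greenconj} shows that Green's Conjecture for $C$ is equivalent to the single critical vanishing $K_{k-3,2}(C,\omega_C)=0$.

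First I would construct $\tilde{C}$ by identifying $n := g-2k+3$ generically chosen pairs $(p_i,q_i)\in C\times C$; the bound $g\geq 2k-2$ guarantees $n\geq 1$. Its arithmetic genus is
$\tilde{g} := g+n = 2g-2k+3 = 2i+3,$ with $i=g-k$, placing $\tilde{C}$ in a moduli space of odd arithmetic genus where the extension of Theorem \ref{hirr} to irreducible nodal curves is available. I would then verify that $[\tilde{C}]$ lies outside the closure $\overline{\cM}_{\tilde{g},i+2}^1$ inside $\overline{\cM}_{\tilde{g}}$. By Harris-Mumford admissible-cover theory, a limit $\mathfrak{g}^1_{g-k+2}$ on $\tilde{C}$ corresponds to a pencil $A\in W^1_{g-k+2}(C)$ for which every node-pair $(p_i,q_i)$ lies in a common fibre of $\varphi_A$. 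For fixed $A$, such $n$-tuples form an $n$-dimensional subset of $(C\times C)^n$, so summing over $W^1_{g-k+2}(C)$ yields a locus of dimension
$$\dim W^1_{g-k+2}(C)+n \;=\; (g-2k+2)+(g-2k+3) \;=\; 2n-1 \;<\; 2n \;=\; \dim(C\times C)^n.$$
A generic choice of pairs therefore avoids this locus, and the extended Hirschowitz-Ramanan theorem (valid across the boundary of irreducible nodal curves since both $\mathfrak{Kosz}_{\tilde{g}}$ and $\cM_{\tilde{g},i+2}^1$ extend as divisors on $\overline{\cM}_{\tilde{g}}$) forces
$$K_{g-k,2}(\tilde{C},\omega_{\tilde{C}})\;=\;K_{i,2}(\tilde{C},\omega_{\tilde{C}})\;=\;0.$$

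The remaining step, and the technical heart of the argument, is the descent of this vanishing to $C$. The normalization $\nu:C\to\tilde{C}$ satisfies $\nu^{*}\omega_{\tilde{C}}\cong\omega_C(D)$ with $D:=\sum_{i=1}^n(p_i+q_i)$, and $H^0(\tilde{C},\omega_{\tilde{C}})$ embeds into $H^0(C,\omega_C(D))$ of codimension $n-1$. Resolving one node at a time produces a tower of partial normalizations $\tilde{C}=\tilde{C}_0\supset\tilde{C}_1\supset\cdots\supset\tilde{C}_n=C$, and the short exact sequences of graded coordinate modules attached to each pinching feed into the long exact sequence \eqref{lonseq} of Koszul cohomology. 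Iterating should produce an injection
$$K_{k-3,2}(C,\omega_C)\;\hookrightarrow\;K_{g-k,2}(\tilde{C},\omega_{\tilde{C}})\;=\;0,$$
in which the shift by $n$ in the first Koszul index exactly accounts for the $n$ nodes being resolved.

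The main obstacle lies precisely in this descent: organising the index shift accumulated through successive pinchings into a bona fide injection of Koszul cohomology groups, and ensuring at each intermediate stage that the genericity of the pairs $(p_i,q_i)$ prevents spurious syzygies from appearing on the partial normalizations. Everything up to the application of Theorem \ref{hirr} is a clean Brill-Noether dimension count; the nontrivial content of Aprodu's argument is concentrated in this transfer.
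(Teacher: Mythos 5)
Your architecture is exactly the one the paper attributes to Aprodu: reduce to the single vanishing $K_{g-k+1,1}(C,\omega_C)=0$ (dual to $K_{k-3,2}(C,\omega_C)=0$), glue $n=g-2k+3$ generic pairs to reach odd arithmetic genus $2(g-k)+3$, use the hypothesis (\ref{lgc}) to check by a dimension count that $[\tilde C]$ avoids $\overline{\cM}^1_{\tilde g,g-k+2}$, and invoke the extension of Theorem \ref{hirr} to irreducible stable curves. The numerology is right. Two caveats on the count: your argument treats only the main stratum of admissible covers, where a single base-point-free pencil on $C$ identifies all $n$ pairs; one must also dispose of the degenerate covers (pencils with base points at some of the glued points, covers broken over the nodes), for which one uses that (\ref{lgc}) at the top degree already forces $\dim W^1_{k+m}(C)\le m$ for all $0\le m\le g-2k+2$.

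The descent step, however, is a genuine gap, and the mechanism you propose for it cannot work as stated. The long exact sequence (\ref{lonseq}) compares modules over a \emph{fixed} polynomial ring $S=\mathrm{Sym}\,V$ with a \emph{fixed} space of linear forms $V$; but each pinching, passing from $\tilde C_{j+1}$ to $\tilde C_j$, enlarges the space of linear forms from $H^0(\omega_{\tilde C_{j+1}})$ to $H^0(\omega_{\tilde C_j})$, so the coordinate modules of successive partial normalizations are not modules over a common ring and (\ref{lonseq}) does not apply to them. The statement one actually needs lives in the linear strand at a \emph{fixed} index: the inclusion $H^0(C,\omega_C)\hookrightarrow H^0(\tilde C,\omega_{\tilde C})$ (sections vanishing at the nodes; geometrically, the canonical model of $C$ is the projection of that of $\tilde C$ from the span of the nodes) induces a natural map $K_{g-k+1,1}(C,\omega_C)\to K_{g-k+1,1}(\tilde C,\omega_{\tilde C})$, and the crux is to prove that this map is \emph{injective}; this is the ``projection of syzygies'' argument of \cite{Ap}, which uses the genericity of the glued pairs. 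Your shift by $n$ in the quadratic strand is the dual shadow of this fixed-index statement (duality on $C$ uses $g$, duality on $\tilde C$ uses $g+n$), so your bookkeeping is consistent, but the injection itself is not produced by your argument --- it is precisely the nontrivial input that your proposal leaves unproven.
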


Since $C$ is $k$-gonal, by adding $g-2k+2$ arbitrary base points to a pencil $A$ of minimal degree on $C$, we observe that $\{A\}+C_{g-2k+2}\subseteq W^1_{g-k+2}(C)$.
In particular, $\mbox{dim } W^1_{g-k+2}(C)\geq g-2k+2$. Thus condition (\ref{lgc}) requires that there be no more pencils of degree $g-k+2$ on $C$ than one would normally expect.
The main use of Theorem \ref{apr} is that it reduces Green's Conjecture, which is undoubtedly a very difficult question, to Brill-Noether theory, which is by comparison easier. Indeed, using
Kodaira-Spencer theory it was showed in \cite{AC} that condition (\ref{lgc}) holds for  general $k$-gonal curves of any genus. This implies the following result:

\begin{thm}\label{greengon}
 Let $C$ be a general $k$-gonal curve of genus $g$, where $2\leq k\leq \frac{g+2}{2}$. Then $C$ verifies Green's Conjecture.
\end{thm}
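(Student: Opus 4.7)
The plan is to invoke Aprodu's criterion (Theorem \ref{apr}) and reduce the syzygy vanishing to a Brill-Noether dimension estimate on the Hurwitz space of $k$-gonal curves. The gonality hypothesis $k\leq(g+2)/2$ is equivalent to $g\geq 2k-2$, so Theorem \ref{apr} is applicable, and the task becomes proving that for a general $k$-gonal curve $C$ of genus $g$ one has the equality
$$\dim W^1_{g-k+2}(C)=g-2k+2.$$
The hyperelliptic case $k=2$ is Clifford-index $0$ and trivial, so one may assume $k\geq 3$.

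By the discussion preceding Theorem \ref{apr}, the lower bound $\dim W^1_{g-k+2}(C)\geq g-2k+2$ holds on every $k$-gonal curve, by adding $g-2k+2$ base points to a pencil of minimal degree $k$. Since $W^1_{g-k+2}$ varies upper-semicontinuously in flat families and the Hurwitz space $\mathcal{H}_{g,k}$ is irreducible, it suffices to exhibit a single smooth $k$-gonal curve of genus $g$ for which the upper bound $\dim W^1_{g-k+2}(C)\leq g-2k+2$ holds. Everything hinges on producing such a curve.

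To construct it, I would degenerate within $\mathcal{H}_{g,k}$ to a stable curve $C_0$ admitting a degree $k$ admissible cover of $\mathbb{P}^1$, chosen so that limit linear series of type $\mathfrak{g}^1_{g-k+2}$ on $C_0$ can be enumerated. A natural choice is a chain (or comb) of elliptic components glued along a rational spine carrying the admissible cover: on such a curve the Eisenbud-Harris theory of limit linear series lets one control every limit $\mathfrak{g}^1_{g-k+2}$ via its vanishing sequences at the nodes. One then checks that the only components of the variety of limit pencils come from adding $g-2k+2$ base points to the minimal pencil (or to its Serre-dual residual in $|\omega_{C_0}|$), and each contributes a component of the expected dimension $g-2k+2$. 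A Kodaira-Spencer argument as in \cite{AC} transports this bound to the generic $k$-gonal curve obtained by smoothing $C_0$, since dimensions of Brill-Noether loci cannot drop under specialization.

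The genuine obstacle is this enumeration of limit $\mathfrak{g}^1_{g-k+2}$'s on the degenerate curve $C_0$: ruling out unexpected components requires a careful node-by-node analysis of vanishing sequences and a delicate handling of the base-point versus base-point-free contributions. Every other step is either a direct application of Theorem \ref{apr} or a formal semicontinuity argument on $\mathcal{H}_{g,k}$. Once the dimension equality is established, Theorem \ref{apr} delivers Green's Conjecture for the general $k$-gonal curve, completing the proof.
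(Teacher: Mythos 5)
Your reduction is exactly the paper's proof: apply Theorem \ref{apr}, note that the hypothesis $g\geq 2k-2$ is met, and verify condition (\ref{lgc}) for the general $k$-gonal curve. Where you diverge is in how that Brill--Noether input is established. The paper does not prove it at all: it simply quotes \cite{AC}, where the equality $\mathrm{dim}\, W^1_{g-k+2}(C)=g-2k+2$ for a general $k$-gonal curve is obtained by Kodaira--Spencer (first-order deformation) theory applied to coverings of $\PP^1$, not by the degeneration to admissible covers and limit linear series that you sketch. Your proposed route is plausible in spirit, but the step you yourself flag as the "genuine obstacle" --- ruling out unexpected components of limit $\mathfrak{g}^1_{g-k+2}$'s on the degenerate curve --- is precisely the content of the theorem and is left unexecuted; as written, your argument for the dimension bound is a program rather than a proof. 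Since the statement is an established result of Arbarello--Cornalba, the efficient (and the paper's) course is to cite it directly, at which point your argument is complete.
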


Theorem \ref{greengon} has been first proven without using the bound given in Theorem \ref{apr} by Teixidor \cite{T} in the range $k\leq \frac{g}{3}$ and by Voisin \cite{V3} in the range $h\geq \frac{g}{3}$. Note that in each gonality stratum $\cM_{g,k}^1\subseteq \cM_g$, Green's Conjecture amounts to a \emph{different} vanishing statement, that is, one does not have
a uniform statement of Green's Conjecture over $\cM_g$.  Theorem
\ref{greengon} has thus to be treated one gonality stratum at a time.

\vskip 4pt

The last application we mention involves curves lying on $K3$ surfaces and discusses results from the paper \cite{AF1}:

\begin{definition}
A polarized $K3$ surface of genus $g$ consists of a pair
$(S,H)$, where $S$ is a smooth $K3$ surface and $H\in \mathrm{Pic}(S)$ is an ample class with $H^2=2g-2$. We denote by $\mathcal{F}_g$ the irreducible
$19$-dimensional moduli space of polarized $K3$ surfaces of genus $g$.
\end{definition}

The highly interesting geometry of $\mathcal{F}_g$ is not a subject of these lectures. We refer instead to \cite{Dol} for a general reference.

\vskip 4pt

As already discussed, if $[S,H]\in \mathcal{F}_g$ is a general polarized $K3$ surface and $C\in |H|$  is any smooth hyperplane section of $S$, Voisin proved that $C$ verifies Green's Conjecture.
Making decisive use of Theorem \ref{apr}, one can extend this result to arbitrary polarized $K3$ surfaces. We quote from \cite{AF1}:

\begin{thm}\label{aprf}
 Green's Conjecture holds for a smooth curve $C$ lying on any $K3$ surface.
\end{thm}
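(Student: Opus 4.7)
The strategy is to reduce Green's Conjecture for $C$ to the Brill--Noether estimate appearing in Aprodu's Theorem \ref{apr}, and then exploit the fact that $C$ sits on a $K3$ surface to verify this estimate regardless of the Picard lattice of $S$. Set $g:=g(C)$ and $k:=\mathrm{gon}(C)$. Since $k\leq \lfloor (g+3)/2\rfloor$, we automatically have $g\geq 2k-2$, so Theorem \ref{apr} applies once we establish the sharp dimensional equality
$$\dim W^1_{g-k+2}(C)=g-2k+2.$$
The lower bound is free: for any minimal pencil $A_0\in W^1_k(C)$, the addition map $\{A_0\}\times C_{g-2k+2}\to W^1_{g-k+2}(C)$ lands in $W^1_{g-k+2}(C)$ and has image of dimension exactly $g-2k+2$. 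The content of the theorem is therefore an upper bound, ruling out any extra component of $W^1_{g-k+2}(C)$ whose general member is not obtained from a minimal pencil by adding $g-2k+2$ base points.

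Assume for contradiction that there is a component $Z\subseteq W^1_{g-k+2}(C)$ of dimension strictly greater than $g-2k+2$, whose general point $[A]$ does not arise by adding base points to a $g^1_k$. Passing to the base-point-free part if necessary, attach to $A$ its Lazarsfeld--Mukai bundle $F_A$ on $S$, fitting in the short exact sequence
$$0\longrightarrow F_A\longrightarrow H^0(C,A)\otimes\OO_S\longrightarrow \iota_*A\longrightarrow 0,$$
where $\iota\colon C\hookrightarrow S$. Then $F_A$ is a rank-$2$ sheaf on $S$ with first Chern class $-[C]$ and second Chern class $\deg(A)=g-k+2$; its moduli (via Euler characteristic and Riemann--Roch on $S$) are controlled by $k$ and $g$. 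The plan is to dichotomize according to whether the dual $F_A^\vee$ is $H$-semistable.

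If $F_A^\vee$ is $H$-stable, then the family of Lazarsfeld--Mukai bundles $\{F_A:[A]\in Z\}$ lives in a moduli space on $S$ of dimension computable from $\chi(F_A,F_A)$, and one checks — this is the standard Mukai dimension formula — that this space has dimension exactly $g-2k+2$, contradicting $\dim Z>g-2k+2$. If $F_A^\vee$ is not $H$-semistable, a destabilizing line subbundle $M\hookrightarrow F_A^\vee$ yields, via the Donagi--Morrison type analysis of pencils on $K3$ surfaces, an effective decomposition $H\sim M+N$ in $\mathrm{Pic}(S)$ such that $N_{|C}-B$ is a pencil of degree $\leq k$ on $C$ for some effective divisor $B$ on $C$; the upshot is that $A\cong N_{|C}(B')$ for an effective $B'$ of degree $g-2k+2$, so $A$ \emph{does} come from a minimal pencil by adding base points, contradicting the choice of $Z$.

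The main obstacle is the second horn of this dichotomy: since $S$ is arbitrary, one cannot assume $\mathrm{Pic}(S)$ is cyclic (as in Voisin's Theorem \ref{vois1}), and the Donagi--Morrison extension statement must be deployed in a form that is insensitive to the Picard lattice. Once both horns are handled, the assumption $\dim W^1_{g-k+2}(C)>g-2k+2$ leads to a contradiction; equality (\ref{lgc}) then holds, and Theorem \ref{apr} immediately yields Green's Conjecture for $C$.
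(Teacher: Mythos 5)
Your overall strategy --- reduce to the Brill--Noether condition (\ref{lgc}) of Theorem \ref{apr} and verify it via Lazarsfeld--Mukai bundles on $S$ --- is exactly the route the paper indicates (and the one taken in \cite{AF1}). However, two of your steps do not hold up as written. First, the inequality $g\geq 2k-2$ is \emph{not} automatic: the gonality bound $k\leq\lfloor(g+3)/2\rfloor$ only gives $g\geq 2k-3$, and for an odd-genus curve of maximal gonality ($g=2i+3$, $k=i+3$) one has $g=2k-3<2k-2$, so Theorem \ref{apr} simply does not apply. That case must be split off and handled by Theorem \ref{hirr} (Hirschowitz--Ramanan combined with Voisin), which covers precisely the curves of odd genus and maximal Clifford index.

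Second, the dimension count in your ``stable'' horn is wrong. By Mukai's formula, the moduli space of simple rank-$2$ sheaves on $S$ with $c_1=[C]$ and $c_2=g-k+2$ has dimension $2-\chi(E,E)=2(g-2k+2)$, not $g-2k+2$, so the claimed contradiction with $\dim Z>g-2k+2$ does not follow. Moreover the assignment $A\mapsto F_A$ is far from injective: a fixed bundle $E=F_A^{\vee}$ has $h^0(S,E)=k+1$, and every $2$-dimensional subspace of $H^0(S,E)$ reconstructs a pair $(C',A')$ with $C'$ moving in the whole linear system $|C|\cong\PP^{g}$, not just on the fixed curve $C$. The honest count is $2(g-2k+2)+\dim\mathrm{Gr}(2,k+1)-\dim|C|=2(g-2k+2)+2(k-1)-g=g-2k+2$, and even then it only bounds $\dim W^1_{g-k+2}$ for a \emph{general} smooth member of $|C|$; transferring the bound to the given curve $C$ requires knowing that the gonality (and the dimension of the relevant Brill--Noether locus) does not jump among smooth curves in $|C|$, a Donagi--Morrison/Green--Lazarsfeld type constancy statement which is also the input needed to close your unstable horn. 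These are exactly the points where the argument of \cite{AF1} does the real work, so as it stands the proposal has a genuine gap in its central parameter count.
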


Observe a significant difference between this result and Theorems \ref{vois1} and \ref{vois2}. Whereas the lattice condition on $\mbox{Pic}(S)$ in the latter cases forces that
$C\in |H|$ has maximal Clifford index $\mbox{Cliff}(C)=\lfloor \frac{g-1}{2}\rfloor$, Theorem \ref{aprf} applies to curves in every gonality stratum in $\cM_g$.

\section{The Prym-Green Conjecture}

In this Section we would like to discuss a relatively new conjecture concerning the resolution of a general paracanonical curve and we shall begin with a general definition.

\begin{definition} Let $C$ be a smooth curve of genus $g$ and $L\in \mathrm{Pic}^d(C)$ a very ample line bundle inducing an embedding
 $\varphi_L:C\hookrightarrow \PP^r$. We say that the the pair $(C,L)$ has a \emph{natural resolution}, if for every $p$ one has
 $$b_{p,2}(C,L)\cdot b_{p+1,1}(C,L)=0.$$
\end{definition}

The naturality of the resolution of $C\subseteq \PP^r$ implies that the lower bounds on the number of syzygies of $C$ given by the Hilbert function of $C$ are attained, that is,
the minimal resolution of $C$ is as simple as the degree and genus of $C$ allow it. Recall the statement of Theorem \ref{diagbetti}: When $h^1(C,L)=0$ and thus $r=d-g$, the difference in Betti numbers on each
diagonal of the Betti table does not vary with $C$ and $L$  and is given by the following formula:
\begin{equation}\label{eulerchar}
b_{p+1,1}(C,L)-b_{p,2}(C,L)=(p+1){d-g\choose p+1}\Bigl(\frac{d+1-g}{p+2}-\frac{d}{d-g}\Bigr).
\end{equation}

Thus if one knows that $b_{p+1,1}(C,L)\cdot b_{p,2}(C,L)=0$, then for any given $p$, depending on the sign of the formula appearing in the right hand side
of (\ref{eulerchar}), one can determine which Betti number has to vanish, as well as the exact value of the remaining number on the same diagonal of the Betti table.

\vskip 4pt

Using the concept of natural resolution, one obtains a very elegant and compact reformulation of Green's Conjecture for generic curves. By inspecting again  Tables 3 and 4,
Voisin's Theorems \ref{vois1} and \ref{vois2} can be summarized
in one single sentence:

\begin{thm}
The minimal resolution of a generical canonical curve of genus $g$ is natural.
\end{thm}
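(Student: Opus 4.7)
The plan is to repackage naturality as a direct consequence of Voisin's Theorems \ref{vois1} and \ref{vois2}, combined with Green-Lazarsfeld non-vanishing and the Koszul duality for canonical curves. Since $b_{p+1,1}(C,\omega_C)=b_{g-3-p,2}(C,\omega_C)$ by that duality, the naturality condition $b_{p,2}\cdot b_{p+1,1}=0$ rephrases as the symmetric condition
\[
b_{p,2}(C,\omega_C)\cdot b_{g-3-p,2}(C,\omega_C)=0 \ \text{ for every } p\in\mathbb Z;
\]
equivalently, the support $\Sigma:=\{p:b_{p,2}(C,\omega_C)\ne 0\}$ must be disjoint from its mirror $g-3-\Sigma$ under $p\mapsto g-3-p$.

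Next I would determine $\Sigma$ precisely for a generic $[C]\in\cM_g$. By (\ref{bngen}), $c:=\mbox{Cliff}(C)=\lfloor(g-1)/2\rfloor$. Proposition \ref{greentriv} (an application of Green-Lazarsfeld Non-Vanishing) yields $b_{c,2}(C,\omega_C)\neq 0$, and by Koszul duality $b_{g-3,2}=b_{1,1}\neq 0$ for nonhyperelliptic $C$ with $g\geq 4$. In the opposite direction, Voisin's Theorem \ref{vois1} in even genus and Theorem \ref{vois2} in odd genus produce the critical Koszul vanishing $K_{c-1,2}(S,H)=0$ on a suitable polarized $K3$ surface $(S,H)$ of genus $g$; the Lefschetz principle \ref{lef} transfers this to $K_{c-1,2}(C,\omega_C)=0$ for $C\in|H|$, and since such $C$ is Brill-Noether-Petri general \cite{La1}, upper semicontinuity of Koszul cohomology yields the analogous vanishing at a generic point of $\cM_g$. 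The ``no internal zeros'' principle for the linear strand of a projectively normal variety (applied via duality to the left tail of the quadratic strand) then upgrades the single vanishing at $p=c-1$ to $b_{p,2}(C,\omega_C)=0$ for every $p<c$. Hence $\Sigma=[c,g-3]\cap\mathbb Z$.

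Finally, the disjointness check is immediate: $\Sigma\cap(g-3-\Sigma)=[c,g-3]\cap[0,g-3-c]=[c,g-3-c]\cap\mathbb Z$ is empty iff $c>(g-3)/2$, which holds in both parities (for $g=2i+2$, $c=i>i-\tfrac{1}{2}=(g-3)/2$; for $g=2i+3$, $c=i+1>i=(g-3)/2$), proving naturality. The genuinely hard input is Voisin's Theorems \ref{vois1} and \ref{vois2} themselves, which are quoted as given. The only delicate subsidiary step is the upgrade from the single critical Voisin vanishing to the full initial-segment of vanishings; this relies on classical structural results for the Koszul complex of canonical curves and does not require new ideas beyond those recalled in the excerpt.
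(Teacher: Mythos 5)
Your proposal is correct and follows essentially the same route as the paper, which presents this theorem as a direct repackaging of Voisin's Theorems \ref{vois1} and \ref{vois2} via the Betti tables of a general canonical curve: the generic vanishing $b_{p,2}(C,\omega_C)=0$ for $p<\mathrm{Cliff}(C)=\lfloor(g-1)/2\rfloor$ together with the duality $b_{p+1,1}=b_{g-3-p,2}$ forces at most one nonzero entry on each diagonal. Your explicit formulation of this as a disjointness condition on the support of the quadratic strand, and the parity check $2c>g-3$, is just a cleaner way of carrying out the ``inspection of Tables 3 and 4'' the paper alludes to.
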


Note that this is the only case when Green's Conjecture is equivalent to the resolution being natural. For a curve $C$ of non-maximal Clifford index,
that is, $\mbox{Cliff}(C)\leq \lfloor \frac{g-3}{2}\rfloor$, the resolution of the canonical curve is not natural, irrespective of whether Green's Conjecture is valid for $C$ or not.
Indeed, for integers $p$ such that

$$\mbox{Cliff}(C)\leq p <g-\mbox{Cliff}(C)-2,$$
the Green-Lazarsfeld Non-Vanishing Theorem implies $b_{p,2}(C,\omega_C)\neq 0$ and $b_{p+1,1}(C,\omega_C)\neq 0$.
This observation suggests that, more generally, naturality might be suitable to capture the resolution of a \emph{general} point of a moduli space of curves, rather than that of
arbitrary objects with certain numerical properties. A concept very similar to naturality appears in the formulation \cite{FMP} of the \emph{Minimal Resolution Conjecture} for general sets of points
on projective varieties.

\vskip 4pt

\begin{definition}\label{parac}
A \emph{paracanonical} curve of genus $g$ is a smooth genus $g$  curve embedded by a linear system
$$\varphi_{\omega_C\otimes\eta}:C\hookrightarrow \PP^{g-2},$$
where $\eta \in \mathrm{Pic}^0(C)$ is a non-trivial line bundle. When $\eta$ is an $\ell$-torsion point in $\mathrm{Pic}^0(C)$ for some $\ell\geq 2$,
we refer to a \emph{level} $\ell$ \emph{paracanonical} curve.
\end{definition}

The case studied by far the most  is that of \emph{Prym canonical} curves, when $\ell=2$. Due to work of Mumford, Beauville, Clemens, Tyurin and others,
it has been known for a long time that properties of theta divisors on Prym varieties can be reformulated in terms of the projective geometry of the corresponding
Prym canonical curve. We refer to \cite{ACGH} Section 6 for an introduction to this beautiful circle of ideas.

\vskip 4pt

Pairs $[C, \eta]$, where $C$ is a smooth curve of genus $g$ and $\eta\in \mbox{Pic}^0(C)[\ell]$ is an $\ell$-torsion point form an irreducible moduli space $\cR_{g,\ell}$, which is a finite cover of
$\cM_g$. The morphism $\cR_{g,\ell}\rightarrow \cM_g$ is given by forgetting the level $\ell$ structure $\eta$.  The geometry of the moduli space $\cR_{g,\ell}$ is discussed in detail in
\cite{CEFS}.

\vskip 4pt

We now fix a general point $[C,\eta]\in \cR_{g,\ell}$, where $g\geq 5$. The paracanonical linear system $\omega_C\otimes \eta$
induces an embedding
$$\varphi_{\omega_C\otimes \eta}:C\hookrightarrow \PP^{g-2}.$$
The Prym-Green Conjecture, originally formulated for $\ell=2$ in \cite{FL} and in the general case in \cite{CEFS}, concerns the resolution of the paracanonical ring
$\Gamma_C(\omega_C\otimes \eta)$.

\begin{conjecture}\label{pg}
The resolution of a general level $\ell$ paracanonical curve of genus $g$ is natural.
\end{conjecture}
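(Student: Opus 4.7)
The plan is to exploit semicontinuity of Koszul cohomology in flat families: since $\cR_{g,\ell}$ is irreducible, it suffices to exhibit a single pair $[C,\eta]$, possibly lying in the boundary of a suitable compactification, for which $(C,\omega_C\otimes\eta)$ has natural resolution. The vanishings involved are finite in number, and formula (\ref{eulerchar}) with $d=2g-2$ and $r=g-2$ pinpoints exactly which side of each diagonal has to vanish. Indeed,
$$b_{p+1,1}(C,\omega_C\otimes \eta)-b_{p,2}(C,\omega_C\otimes \eta)=(p+1){g-2\choose p+1}\Bigl(\frac{g-1}{p+2}-\frac{2g-2}{g-2}\Bigr).$$
The right hand side changes sign at $p+2=(g-2)/2$. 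For smaller $p$ it is positive, so naturality demands $b_{p,2}=0$; for larger $p$ it is negative, demanding $b_{p+1,1}=0$; when $g$ is even the critical diagonal imposes the rigid simultaneous vanishing of both Betti numbers.

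Next I would produce the test pair by degeneration to a curve lying on a surface equipped with an $\ell$-torsion twist. For $\ell=2$ the natural candidates are Nikulin surfaces, that is, $K3$ surfaces carrying a nontrivial \'etale double cover whose restriction to each smooth curve $C\in |H|$ recovers the prescribed level structure. For general $\ell$ one uses the Barth--Verra type surfaces constructed in \cite{CEFS}, which were designed precisely so as to induce a primitive $\ell$-torsion point on every curve in $|H|$. The Lefschetz principle of Theorem \ref{lef} then replaces Koszul cohomology of the paracanonical curve by that of the surface pair $(S,H\otimes\eta)$, while Proposition \ref{kernelbundles} recasts each remaining vanishing as a cohomological statement about exterior powers of the Lazarsfeld bundle $M_{H\otimes\eta}$ on $S$. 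At this point I would attempt to adapt Voisin's analysis of tautological bundles on Hilbert schemes of points from \cite{V3} and \cite{V4} to the twisted paracanonical setting.

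The principal obstacle is the loss of the Serre-duality symmetry $K_{p,1}(C,\omega_C)\cong K_{g-2-p,2}(C,\omega_C)^{\vee}$ that halved the work in Green's original conjecture. For the paracanonical bundle each diagonal of the Betti table has to be analyzed independently, and at the critical diagonal one must produce a genuine cancellation rather than a one-sided vanishing. The required numerical balance is delicate enough that one should expect Conjecture \ref{pg} to need modification in exceptional congruence classes of $g$, and the tautological-sheaf arguments available on Hilbert schemes are visibly tight in that range.

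In practice I would therefore complement the surface-theoretic component with explicit computer-algebra verifications on carefully chosen stable paracanonical curves lying in the boundary of $\overline{\cR}_{g,\ell}$, and then transport the vanishings to the generic smooth pair by semicontinuity. A natural intermediate target is to isolate, for each fixed $\ell$, the congruence classes of $g$ in which the critical diagonal balance can actually be achieved; once this is pinned down, Proposition \ref{kernelbundles} combined with the filtration technique already used in the proof of Theorem \ref{greeenvanish} should bring the remaining non-critical vanishings within reach.
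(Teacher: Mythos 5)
The statement you are asked to prove is a \emph{conjecture}, and the paper does not prove it: it remains open in general, it is established in the literature only for odd genus (in \cite{FK1}, \cite{FK2}, \cite{FK3}), and --- crucially --- the paper reports that it \emph{fails} for level $\ell=2$ in genus $8$ (and conjecturally in genus $16$), since \cite{CFVV} shows $K_{2,1}(C,\omega_C\otimes\eta)\neq 0$ for every Prym-canonical curve of genus $8$. So no complete proof of Conjecture \ref{pg} as stated can exist. To your credit, your write-up is explicitly a strategy sketch rather than a claimed proof, and you correctly anticipate that exceptional congruence classes of $g$ may force a modification; but you should be aware that the obstruction is not merely ``expected'' --- it is a theorem in genus $8$.

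As a strategy outline, your sketch matches the broad architecture of the known partial results (semicontinuity on the irreducible space $\cR_{g,\ell}$, reduction to a single test pair, degeneration to curves on Nikulin or Barth--Verra surfaces). However, one step as written would fail. You propose to apply the Lefschetz principle of Theorem \ref{lef} to pass from $(C,\omega_C\otimes\eta)$ to a ``surface pair $(S,H\otimes\eta)$''. On a Barth--Verra surface the paracanonical bundle is $\OO_C(H)$ with $C\in|L|$ and $H\neq L$; the torsion point is $\eta_C=\OO_C(H-L)$, and there is no line bundle $\eta$ on $S$ restricting to it. Since $C$ is not a member of $|H|$, Theorem \ref{lef} does not apply to compare $K_{p,q}(C,\OO_C(H))$ with $K_{p,q}(S,H)$. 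The actual argument uses the long exact sequence (\ref{grseq}), which splits the required vanishings into $K_{p,q}(S,H)$ (handled by restricting to a curve $D\in|H|$ of genus $g-2$ and invoking Green's conjecture for curves on $K3$ surfaces via \cite{AF1} and Theorem \ref{hirr}) and the mixed groups $K_{p,q}(S,-C,H)$, which are treated not by Voisin's Hilbert-scheme machinery but by a further lattice specialization making $D$ hyperelliptic, so that $M_{\omega_D}$ splits into line bundles and the vanishing reduces to an elementary $h^0$ computation. Finally, note that your proposed route cannot be uniform in $g$: the critical-diagonal cancellation in even genus is exactly where the known counterexample lives, so any completed argument must either exclude even genus or isolate the exceptional $(g,\ell)$.
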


There is an obvious weakening of the Prym-Green Conjecture, by allowing $\eta$ to be a general line bundle of degree zero on $C$, rather
than an $\ell$-torsion bundle.

\begin{conjecture}\label{pggen}
The resolution of a general paracanonical curve $C\subseteq \PP^{g-2}$ of genus $g$ and degree $2g-2$ is natural.
\end{conjecture}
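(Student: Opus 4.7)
The plan is to prove Conjecture \ref{pggen} by specialization of $(C,\eta)$ to a smooth hyperplane section of a carefully engineered $K3$ surface together with the restriction of an auxiliary line bundle from the surface, transferring the Koszul cohomology question from $C$ to $S$ via the Lefschetz Hyperplane Principle (Theorem \ref{lef}), and then running a tautological bundle computation on a Hilbert scheme of points in the spirit of Voisin's proofs of Theorems \ref{vois1} and \ref{vois2}. Semicontinuity of Betti numbers in flat families will then propagate naturality from one explicit specialisation to the generic paracanonical pair.

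First I would reduce Conjecture \ref{pggen} to a single critical vanishing. Set $L:=\omega_C\otimes\eta$; then $\deg L=2g-2$ and, since $\eta\in\mathrm{Pic}^0(C)$ is non-trivial, $h^1(C,L)=h^0(C,\eta^{-1})=0$, so $L$ is non-special with $r(L)=g-2$. Substituting $d-g=g-2$ into the Euler-characteristic identity (\ref{eulerchar}) shows that $b_{p+1,1}(C,L)-b_{p,2}(C,L)$ is proportional to $g-6-2p$, and hence changes sign at a unique integer $p_0\approx (g-6)/2$. Naturality along every diagonal of the Betti table is then equivalent to the single critical vanishing
\[
K_{p_0,\,2}\bigl(C,\,\omega_C\otimes\eta\bigr)\;=\;0,
\]
the remaining quadratic-strand vanishings for $p<p_0$ and linear-strand vanishings for $p>p_0$ being forced by an adaptation of the filtration argument used to prove Theorem \ref{greeenvanish}.

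Next I would construct the specialisation. Choose a $K3$ surface $S$ with $\mathrm{Pic}(S)=\mathbb Z H\oplus\mathbb Z D$, where $H^2=2g-2$, $H\cdot D=0$ and $D^2<0$ with $h^0(S,\pm D)=0$, so that $(S,H)\in\mathcal{F}_g$ is generic in a suitable Noether--Lefschetz divisor. For a general smooth $C\in|H|$, adjunction gives $\omega_C=H|_C$, while $\eta:=\mathcal{O}_S(D)|_C$ is a non-trivial element of $\mathrm{Pic}^0(C)$; hence $\omega_C\otimes\eta\cong\mathcal{O}_S(H+D)|_C$. Granting the Kodaira-type vanishings $H^1(S,q(H+D))=0$ for all $q\geq 1$, Theorem \ref{lef} yields
\[
K_{p_0,\,2}(C,\,\omega_C\otimes\eta)\;\cong\;K_{p_0,\,2}(S,\,H+D).
\]
By Proposition \ref{kernelbundles}, this last group embeds into $H^1\bigl(S,\,\bigwedge^{p_0+1}M_{H+D}\otimes(H+D)\bigr)$, which one attacks by realising $\bigwedge^{p_0+1}M_{H+D}$ as a tautological vector bundle on the Hilbert scheme $S^{[p_0+1]}$ and computing its cohomology there. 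One then verifies that as $(S,H,D)$ varies in the chosen Noether--Lefschetz divisor, the induced map to the universal Picard $\mathrm{Pic}^0_{\cM_g}$ is dominant, so that semicontinuity upgrades the single vanishing to one at the general paracanonical pair.

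The hardest step is the Hilbert-scheme cohomology computation. Because $H+D$ is not a polarisation pulled back from a symmetric geometric object on $S$, the Lazarsfeld--Mukai bundle attached to a minimal pencil on $C$ is no longer canonically defined on $S$, which is exactly the difficulty that forced Voisin to introduce the auxiliary rational component $\Delta$ in Theorem \ref{vois2}. Selecting the right class $D$ (candidates include smooth elliptic classes, divisors associated with Nikulin involutions, or a $(-2)$-curve configuration analogous to Voisin's odd-genus argument) for which the relevant cohomology on $S^{[p_0+1]}$ actually collapses, while the Noether--Lefschetz locus simultaneously dominates $\mathrm{Pic}^0_{\cM_g}$, is the technical heart of the argument and the point at which the approach is most likely to succeed or fail.
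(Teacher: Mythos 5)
First, a point of order: Conjecture \ref{pggen} is stated in the paper as an \emph{open} conjecture (a weakening of the Prym--Green Conjecture); the paper contains no proof of it, and the proven neighbouring results cover only odd genus via the stronger torsion statement and Barth--Verra $K3$ surfaces. So your proposal is a strategy for an open problem, and I can only measure it against the strategy the paper describes for those proven cases. Measured that way, it has several genuine gaps.

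The reduction to a single vanishing $K_{p_0,2}(C,\omega_C\otimes\eta)=0$ fails in odd genus. Substituting $d=2g-2$ into (\ref{eulerchar}) shows the diagonal difference changes sign at $p=(g-6)/2$, which is an integer only for even $g$; there naturality does collapse to the single statement $K_{(g-4)/2,1}=K_{(g-6)/2,2}=0$. For odd $g$, naturality requires the \emph{two independent} vanishings $K_{(g-3)/2,1}(C,\omega_C\otimes\eta)=0$ and $K_{(g-7)/2,2}(C,\omega_C\otimes\eta)=0$, as in (\ref{pgodd}); the linear-strand one is not ``forced by an adaptation of the filtration argument of Theorem \ref{greeenvanish}'' --- that argument needs $\deg L\geq 2g+p+1$, far beyond $2g-2$, and this vanishing is precisely one of the two hard statements proved in \cite{FK1}, \cite{FK2}. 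Second, the Lefschetz step is misapplied: in your setup $C\in|H|$ while the line bundle being restricted is $H+D$, so Theorem \ref{lef} does not give $K_{p,2}(C,(H+D)|_C)\cong K_{p,2}(S,H+D)$; one only obtains a long exact sequence with mixed correction terms $K_{*,*}(S,-C,H+D)$, exactly the sequence (\ref{grseq}), and controlling those terms is where the real work in the paper's odd-genus argument lies (via further specialisation to hyperelliptic sections of a finer lattice, not via a Hilbert-scheme computation). Third, your semicontinuity logic is inverted: you do not need the Noether--Lefschetz family to dominate the universal Picard variety (it cannot for $g\geq 12$, since $K3$ sections already fail to dominate $\cM_g$); irreducibility of the universal degree-$(2g-2)$ Picard variety plus a single smooth pair with the required vanishing suffices. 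Finally, any even-genus argument must explain why a general $\eta\in\mathrm{Pic}^0(C)$ behaves better than a $2$-torsion $\eta$, since the stronger conjecture genuinely fails for $\ell=2$ and $g=8,16$; restricting a fixed divisor class $D$ from the surface gives no handle on that distinction.
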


Since one has an embedding of $\cR_{g,\ell}$ in the universal Jacobian variety of degree $2g-2$ over $\cM_g$, clearly Conjecture \ref{pggen}
is a weaker statement than the Prym-Green Conjecture. The numerology in Conjectures \ref{pg} and \ref{pggen} is the same, one conjecture
is just a refinement of the other.

\vskip 4pt

We would like to spell-out some of he implications of the Prym-Green Conjecture.
For odd $g$, the Prym-Green Conjecture amounts to the following  independent vanishing statements:

\begin{equation}\label{pgodd}
 K_{\frac{g-3}{2},1}\bigl(C, \omega_C \otimes \eta\bigr)=0 \ \ \ \mbox{and} \ \ K_{\frac{g-7}{2},2}\bigl(C, \omega_C \otimes \eta\bigr)=0.
\end{equation}

Assuming naturality of the resolution, one is able to determine explicitly the entire resolution. Setting $g:=2i+5$, one obtains the following resolution

\begin{table}[htp!]
\begin{center}
\begin{tabular}{|c|c|c|c|c|c|c|c|c|}
\hline
$1$ & $2$ & $\ldots$ & $i-1$ & $i$ & $i+1$ & $i+2$  & $\ldots$ & $2i+2$\\
\hline
$b_{1,1}$ & $b_{2,1}$ & $\ldots$ & $b_{i-1,1}$ & $b_{i,1}$ & 0 & 0 &  $\ldots$ & 0 \\
\hline
$0$ &  $0$ & $\ldots$ & $0$ & $b_{i,2}$ & $b_{i+1,2}$ & $b_{i+2,2}$ & $\ldots$ & $b_{2i+2,2}$\\
\hline
\end{tabular}
\end{center}
    \caption{The Betti table of a general paracanonical curve of genus $g=2i+5$}
\end{table}

\vskip 3pt

where,
 $$b_{p,1}=\frac{p(2i-2p+1)}{2i+3}{2i+4\choose p+1} \ \  \mbox{ for } p\leq i,\  \mbox{ and } \ \  b_{p,2}=\frac{(p+1)(2p-2i+1)}{2i+3}{2i+4\choose p+2} \  \ \  \mbox{ for } p\geq i.$$

\vskip 3pt

The resolution is \emph{natural}, but fails to be \emph{pure} in column $i$, for both Koszul cohomology groups
$K_{i,1}(C,\omega_C\otimes \eta)$ and $K_{i,2}(C,\omega_C\otimes \eta)$ are non-zero. Note also the striking resemblance of the minimal resolution of the general level $\ell$ paracanonical curve of
\emph{odd} genus and the resolution of the general canonical curve of \emph{even} genus.

\vskip 6pt

For even genus, the Prym-Green Conjecture reduces to a single vanishing statement, namely:
\begin{equation}\label{pgeven}
K_{\frac{g-4}{2},1}\bigl(C, \omega_C \otimes \eta\bigr)=K_{\frac{g-6}{2},2}\bigl(C, \omega_C \otimes \eta\bigr)=0.
\end{equation}
Indeed, by applying (\ref{eulerchar}), one always has $b_{\frac{g-4}{2},1}(C,\omega_C\otimes \eta)=b_{\frac{g-6}{2},2}(C,\omega_C\otimes \eta)$, so the
Prym-Green Conjecture in even genus amounts to one single vanishing statement. Like in the previous case,  naturality determines the resolution completely.

\vskip 4pt

We write $g:=2i+6$ and using (\ref{diagbetti}), obtain the following table:

\begin{table}[htp!]
\begin{center}
\begin{tabular}{|c|c|c|c|c|c|c|c|c|}
\hline
$1$ & $2$ & $\ldots$ & $i-1$ & $i$ & $i+1$ & $i+2$  & $\ldots$ & $2i+3$\\
\hline
$b_{1,1}$ & $b_{2,1}$ & $\ldots$ & $b_{i-1,1}$ & $b_{i,1}$ & 0 & 0 &  $\ldots$ & 0 \\
\hline
$0$ &  $0$ & $\ldots$ & $0$ & $0$ & $b_{i+1,2}$ & $b_{i+2,2}$ & $\ldots$ & $b_{2i+3,2}$\\
\hline
\end{tabular}
\end{center}
    \caption{The Betti table of a general paracanonical curve of genus $g=2i+6$}
\end{table}

\vskip 4pt

Here $$b_{p,1}= \frac{p(i+1-p)}{i+2}{2i+5\choose p+1} \ \mbox{ for } p\leq i  \ \mbox{ and } b_{p,2}=\frac{(p+1)(p-i)}{i+2}{2i+5\choose p+2}  \ \mbox{ for } p>i.$$

In this case the resolution is both natural and pure. Therefore, the expected resolution of a general paracanonical curve
of \emph{even} genus has the same features as the resolution of a general canonical curve of \emph{odd} genus.

\vskip 4pt

The first non-trivial case of the Prym-Green Conjecture is  $g=6$, when one has to show that the multiplication map
$$\mbox{Sym}^2 H^0(C,\omega_C\otimes \eta) \longrightarrow H^0(C,\omega_C^{2} \otimes \eta^{2})$$
is an isomorphism for a generic choice of $[C,\eta]\in \cR_{6,\ell}$. Observe that $h^0(C,\omega\otimes \eta)=5$ whereas
$h^0(C,\omega_C^{2}\otimes \eta^{2})=15$, therefore both vector spaces appearing in the previous map have he same dimension
$15$.

A systematic study of the Prym-Green Conjecture has been undertaken in \cite{CEFS}. It has been proved with the use of \emph{Macaulay} that the conjecture holds for
all $g\leq 18$ and $\ell\leq 5$ with two possible exceptions
for $\ell=2$, when $g=8,16$. In those cases, it has been showed that in the case the underlying curve $C$ is a rational $g$-nodal curve, the level $2$ paracanonical curve $C\subseteq \PP^{g-2}$ has one unexpected syzygy. This finding strongly suggests that the Prym-Green Conjecture might fail for level $\ell=2$ and for genera which have strong divisibility properties by $2$. Testing with Macaulay the
next relevant case $g=24$ is at the moment out of reach. The weaker Conjecture \ref{pggen} is expected to hold for every $g$ without exceptions.

\vskip 3pt

\subsection{The Prym-Green Conjecture for curves of odd genus.}

We shall now discuss the main ideas of the proof presented in \cite{FK1} and \cite{FK2} of the Prym-Green Conjecure for paracanonical curves of odd genus. We set $g:=2i+5$. One aims to
exhibit a smooth level $\ell$ curve $[C,\eta]\in \cR_{g,\ell}$ whose Koszul cohomology satisfies the following two vanishing properties:
\begin{equation}\label{prymgrodd}
K_{i+1,1}(C,\omega_C\otimes \eta)=0 \ \ \mbox{ and } \ \ K_{i-1,2}(C,\omega_C\otimes \eta)=0.
\end{equation}

To construct such a curve, one can resort to $K3$ surfaces, but there is an extra difficulty in comparison to the usual Green's Conjecture, because one should produce both a
general curve of genus $g$, as well as a distinguished $\ell$-torsion point in its Jacobian. Since this point has to be sufficiently explicit
to be able to compute the Koszul cohomology of the corresponding paracanonical line bundle, it is natural to attempt to realize it as the restriction
of a line bundle defined on the ambient $K3$ surface. This program has been carried out using special $K3$ surfaces, which are  of \emph{Nikulin} type when $\ell=2$, or of \emph{Barth--Verra}
type for high $\ell$. The final solution to the Prym-Green Conjecture in odd genus is presented in the paper \cite{FK3}, using certain elliptic ruled surface which are limits of polarized $K3$
surfaces.

\vskip 5pt

We let $S$ be a smooth $K3$ surface having the following Picard lattice  $$\mbox{Pic}(S)=\mathbb Z\cdot L\oplus \mathbb Z\cdot H,$$ where $L^2=L\cdot H=2g-2$ and $H^2=2g-6$. For each smooth curve
$C\in |L|\cong \PP^g$, the restriction $\OO_C(H)$ is a line bundle of degree $2g-2$, hence $\OO_C(H-C)\in \mbox{Pic}^0(C)$. Since the Jacobian of $C$ has the same dimension $g$
as the linear system $|L|$, it is natural to expect
that there will exist finitely many curves $C\in |L|$ such that $$\eta_C:=\OO_C(H-C)=\OO_C(H)\otimes \omega_C^{\vee}\in \mbox{Pic}^0(C)$$
is an $\ell$-torsion point. A priori, one is not sure that these curves are smooth or even nodal.
We name such surfaces  after \emph{Barth--Verra}, for they were first introduced in the beautiful paper \cite{BV}.
In \cite{FK2}, we show that for the general Barth--Verra surface, the expectation outlined above can be realized.

\begin{thm} \label{generic-transversal}
For a general Barth--Verra  surface $S$ of genus $g \geq 3$ and an integer $\ell$,  there exist precisely $$\displaystyle{ 2\ell^2-2 \choose g }$$  curves $C \in |L|$ such that
$\eta_{C}^{\otimes \ell}\cong \OO_C$.
All such curves $C$ are smooth and irreducible. The number of curves $C$ such that $\eta_{C}$ has order exactly $\ell$ is strictly positive.
\end{thm}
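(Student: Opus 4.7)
I would realize the set $\{C\in|L|:\eta_C^{\otimes\ell}\cong \OO_C\}$ as the first degeneracy locus of an explicit morphism of vector bundles on $B:=|L|\cong \PP^g$ and compute its class by Porteous's formula. The condition $\eta_C^{\otimes\ell}\cong\OO_C$ is equivalent to $H^0\bigl(C,\OO_C(\ell(H-L))\bigr)\neq 0$, since this line bundle has degree zero. On $B\times S$, with $\mathcal{C}$ the universal curve (of class $p_S^*L+p_B^*\OO_B(1)$), the Koszul resolution of $\mathcal{C}$ twisted by $p_S^*\OO_S(\ell(H-L))$ reads
$$0\to p_S^*\OO_S\bigl(\ell H-(\ell+1)L\bigr)\otimes p_B^*\OO_B(-1)\to p_S^*\OO_S\bigl(\ell(H-L)\bigr)\to \OO_\mathcal{C}\bigl(\ell(H-L)\bigr)\to 0.$$
For a general Barth--Verra surface, the lattice $\mathrm{Pic}(S)=\ZZ L\oplus\ZZ H$ together with the intersection numbers $\bigl(\ell(H-L)\bigr)\cdot L=0$, $\bigl(\ell H-(\ell+1)L\bigr)\cdot L=-(2g-2)$, and the negativity of the relevant self-intersections (for $g\leq 2\ell^2-2$), force $H^0=H^2=0$ for both sheaves on $S$; Riemann--Roch then yields $\dim H^1(S,\ell(H-L))=2\ell^2-2=:w$ and $\dim H^1(S,\ell H-(\ell+1)L)=2\ell^2-g-1=:v$.

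\textbf{Porteous and transversality.} Pushing the twisted Koszul sequence forward to $B$ by $p_B$ and using these vanishings gives the four-term exact sequence
$$0\to \pi_*\OO_\mathcal{C}\bigl(\ell(H-L)\bigr)\to \C^{v}\otimes\OO_B(-1)\stackrel{\phi}{\to} \C^{w}\otimes\OO_B\to R^1\pi_*\OO_\mathcal{C}\bigl(\ell(H-L)\bigr)\to 0,$$
so the torsion locus is exactly the first degeneracy locus $D(\phi)$, of expected codimension $w-v+1=g=\dim B$. Porteous's formula then gives
$$[D(\phi)]=c_g\bigl(\C^{w}\otimes\OO_B-\C^{v}\otimes\OO_B(-1)\bigr)=\bigl[(1-h)^{-v}\bigr]_g=\binom{v+g-1}{g}=\binom{2\ell^2-2}{g}.$$
The hard part is \emph{transversality}: for generic $S$, one must check that $D(\phi)$ is a reduced $0$-dimensional scheme of the expected length, so that the virtual count equals the actual cardinality. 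I would reduce this to an infinitesimal statement by identifying the Zariski tangent space to $D(\phi)$ at each $C$ with the kernel of the cup-product $H^0(C,\omega_C)\to H^1(C,\OO_C)$ induced by the derivative of the section $C\mapsto\eta_C^{\otimes\ell}$ of the relative Jacobian; the injectivity of this cup-product follows from Noether--Lefschetz-type genericity of $S$ within the Barth--Verra moduli.

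\textbf{Smoothness, irreducibility, exact torsion order.} Smoothness and irreducibility of each $C\in D(\phi)$ follow by a deformation argument over the Barth--Verra moduli space $\mathfrak{BV}$: the discriminant in $|L|$ is a proper subvariety, and the locus of reducible curves in $|L|$ is also proper since $\mathrm{Pic}(S)=\ZZ L\oplus\ZZ H$ forces any nontrivial splitting of $L$ to involve a class like $L-H$ with $(L-H)\cdot L=0$ and $(L-H)^2=-4$, which is generically non-effective. A dimension count over $\mathfrak{BV}$ then shows that for generic $S$, the finite set $D(\phi)$ avoids both bad loci. Finally, the strict positivity of the count of $C$ with $\mathrm{ord}(\eta_C)=\ell$ follows by M\"obius inversion:
$$\#\bigl\{C\in|L|:\mathrm{ord}(\eta_C)=\ell\bigr\}=\sum_{m\mid\ell}\mu(\ell/m)\binom{2m^2-2}{g},$$
which is strictly positive whenever $\binom{2\ell^2-2}{g}>0$, because the leading term dominates the inclusion-exclusion corrections from the proper divisors $m<\ell$ (each contribution $\binom{2m^2-2}{g}$ being bounded by $\binom{\ell^2/2-2}{g}$, negligible compared to $\binom{2\ell^2-2}{g}$ for $g\geq 3$).
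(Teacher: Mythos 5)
Your enumerative setup is the correct one and is essentially the approach of Barth--Verra \cite{BV} and of \cite{FK2}: since $(H-L)\cdot L=0$, the torsion condition becomes a non-vanishing condition $H^0(C,\OO_C(\ell(H-L)))\neq 0$, the push-forward of the twisted restriction sequence produces a morphism $\phi$ between bundles of ranks $v=2\ell^2-g-1$ and $w=2\ell^2-2$ on $|L|$, and Porteous gives $c_g\bigl((1-h)^{-v}\bigr)=\binom{2\ell^2-2}{g}$; all the numerics check out. Two caveats already at this stage: the vanishing $H^2(S,\ell H-(\ell+1)L)\cong H^0(S,(\ell+1)L-\ell H)^{\vee}=0$ does \emph{not} follow from negative self-intersection alone (classes of negative square on a $K3$ are frequently effective), so an actual analysis of the effective cone of the general Barth--Verra lattice is required; and the identification of $D(\phi)$ with the torsion locus uses integrality of $C$, which is part of what is to be proved.

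The genuine gap is that everything beyond the virtual count is asserted rather than argued, and that residue is precisely the content of the theorem (the count itself goes back to \cite{BV}; what \cite{FK2} adds is exactly the transversality). First, your identification of the differential of $D(\phi)$ at $C$ with a map $H^0(C,\omega_C)\to H^1(C,\OO_C)$ is right (note $N_{C/S}\cong\omega_C$ and $\mathrm{Hom}(\ker\phi_C,\mathrm{coker}\,\phi_C)\cong H^1(C,\OO_C)$ when $h^0(C,\eta_C^{\ell})=1$), but ``injectivity follows from Noether--Lefschetz-type genericity of $S$'' is not a proof: the general Barth--Verra surface is already the object under study, so generality in that $18$-dimensional family is not a free parameter you can invoke again; one must actually show this $g\times g$ map is an isomorphism at every point of the locus, which is the main technical result of \cite{FK2}. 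Second, smoothness of the curves does not follow from a dimension count: a finite subset of $|L|$ has no a priori reason to avoid the discriminant divisor, and the argument over the Barth--Verra moduli space only closes if you know the universal torsion locus is irreducible, or exhibit at least one surface with a smooth torsion section --- neither of which you do. (Your lattice-theoretic approach to irreducibility is the right idea but is likewise left unexecuted.) Finally, the positivity of the exact-order count via M\"obius inversion needs a slightly sharper estimate: the corrections must be summed over the $2^{\omega(\ell)}-1$ relevant divisors, and for small $g$ and highly composite $\ell$ the bound ``each term is at most $\binom{\ell^2/2-2}{g}$'' is not by itself conclusive against the main term, which exceeds it only by a factor of roughly $4^g$; using that the $k$-th correction decays like $(4/k^2)^g$ repairs this.
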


With this very effective tool at hand, we can now state the last result we wish to discuss in these lectures:

\begin{thm}
 The Prym-Green Conjecture holds for paracanonical level $\ell$ curves of odd genus.
\end{thm}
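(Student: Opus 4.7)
The plan is to fix odd genus $g=2i+5$ and reduce the Prym-Green Conjecture to exhibiting a single pair $[C,\eta]\in \mathcal{R}_{g,\ell}$ satisfying the two Koszul vanishings
\[
K_{i+1,1}(C,\omega_C\otimes \eta)=0 \quad \mbox{and} \quad K_{i-1,2}(C,\omega_C\otimes \eta)=0
\]
of (\ref{prymgrodd}); by semicontinuity of Betti numbers in flat families, combined with irreducibility of $\mathcal{R}_{g,\ell}$, verification at one point propagates to the general point.

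For the construction of such a pair I would take a general Barth--Verra K3 surface $S$ with $\mathrm{Pic}(S)=\mathbb{Z}\cdot L\oplus \mathbb{Z}\cdot H$, where $L^2=L\cdot H=2g-2$ and $H^2=2g-6$. Theorem \ref{generic-transversal} provides a smooth curve $C\in |L|$ for which $\eta_C:=\OO_C(H-L)$ is exactly $\ell$-torsion. By adjunction (using $\omega_S\cong \OO_S$), $\omega_C\cong \OO_C(L)$, hence $\omega_C\otimes \eta_C\cong \OO_C(H)$: the paracanonical model of $(C,\eta_C)$ is cut out on $S$ by the restriction of $|H|$.

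Next, I would invoke a Lefschetz-type hyperplane argument, extending Theorem \ref{lef} to the restriction of a line bundle $H$ distinct from the divisor class $L$ of $C$; the required auxiliary vanishings $H^i(S,qH-L)=0$ are easily checked via Riemann--Roch on the rank-two Picard lattice. The outcome is a canonical isomorphism
\[
K_{p,q}(C,\omega_C\otimes \eta_C)\cong K_{p,q}(S,H),
\]
and by Proposition \ref{kernelbundles} the two vanishings to be established are equivalent to the vanishing of suitable cohomology groups of wedge powers of the Lazarsfeld bundle $M_H$ on $S$.

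The principal obstacle is establishing these surface-level vanishings: unlike in Voisin's proof of Green's Conjecture for canonical curves, one must simultaneously control \emph{two} independent Koszul groups, and no single tautological bundle on a Hilbert scheme of $S$ captures both. My plan is to degenerate the Barth--Verra family further to a limit $S_0$ whose irreducible components are elliptic ruled surfaces, as realized in \cite{FK3}. On $S_0$ the Lazarsfeld bundle restricts nicely along the rulings, and its exterior powers decompose into summands whose cohomology can be computed explicitly. Feeding these into the tautological exact sequences
\[
0\longrightarrow \bigwedge^{p+1}M_H\longrightarrow \bigwedge^{p+1}H^0(S_0,H)\otimes \OO_{S_0}\longrightarrow \bigwedge^p M_H\otimes \OO_{S_0}(H)\longrightarrow 0
\]
for the two relevant bidegrees yields the desired vanishings on $S_0$; semicontinuity then transports them to the general Barth--Verra surface, and the Lefschetz principle descends them to $C$, completing the argument.
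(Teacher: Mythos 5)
Your setup (reduction to a single pair by semicontinuity over the irreducible space $\cR_{g,\ell}$, the Barth--Verra surface, Theorem \ref{generic-transversal}, and the identification $\omega_C\otimes\eta_C\cong\OO_C(H)$) matches the paper. The argument breaks down, however, at the claimed isomorphism $K_{p,q}(C,\omega_C\otimes\eta_C)\cong K_{p,q}(S,H)$. The obstruction to such a Lefschetz isomorphism when one restricts $H$ to a member of the \emph{different} linear system $|L|$ is measured by the twisted groups $K_{p-1,q+1}(S,-C,H)$ appearing in the long exact sequence (\ref{grseq}), and these are governed by the spaces $H^0(S,qH-C)$. These do not vanish: on a $K3$ surface Riemann--Roch gives $\chi(2H-L)=2+\frac{1}{2}(2H-L)^2=g-7>0$, while $(L-2H)\cdot H<0$, so $h^0(S,2H-C)\geq g-7$ and the complex computing $K_{i,2}(S,-C,H)$ has large nonzero terms. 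So the auxiliary vanishings you dismiss as ``easily checked via Riemann--Roch'' in fact fail, and the two statements (\ref{van2}), namely $K_{i,2}(S,-C,H)=0$ and $K_{i-2,3}(S,-C,H)=0$, are genuine and constitute the hard core of the proof. The paper handles them by restricting to a curve $D\in|H|$, rewriting them via Proposition \ref{kernelbundles} as $H^0\bigl(D,\bigwedge^iM_{\omega_D}\otimes(2K_D-C_D)\bigr)$ and $H^1\bigl(D,\bigwedge^{i-1}M_{\omega_D}\otimes(2K_D-C_D)\bigr)$, and then specializing the Picard lattice so that $D$ becomes hyperelliptic, whence $M_{\omega_D}$ splits into line bundles and the vanishing reduces to the non-existence of sections of line bundles of degree at most $g(D)-1$.

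Conversely, the part you identify as the ``principal obstacle'' --- the untwisted vanishings $K_{i+1,1}(S,H)=0$ and $K_{i-1,2}(S,H)=0$ of (\ref{van1}) --- is the easy half: by the genuine Lefschetz principle applied to $D\in|H|$ these are Koszul groups of a canonical curve of genus $2i+3$, and they follow from Green's Conjecture for curves on arbitrary $K3$ surfaces (Theorem \ref{aprf}, combined with Theorem \ref{hirr}) once a lattice computation shows $\mathrm{Cliff}(D)=i+1$. No degeneration to elliptic ruled surfaces is needed for that step; such degenerations enter the story for a different reason, namely to cover the levels $\ell$ not reachable by the Barth--Verra construction.
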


This result is proved for $\ell=2$ in \cite{FK1} and for $\ell\geq \sqrt{\frac{g+2}{2}}$ in \cite{FK2}. It is this last proof that we shall discuss in what follows.
For the remaining levels, the  proof of the Prym-Green Conjecture is completed in \cite{FK3} using elliptic surfaces.

\vskip 4pt

Assume we are in the situation of Theorem \ref{generic-transversal} and we take a Barth--Verra $K3$ surface $S$, together with smooth curves $C\in |L|$ having genus $2i+5$ and
$D\in |H|$ having genus $2i+3$ respectively. The $\ell$-torsion point on $\eta_C=\OO_C(H-C)$ is obtained as a restriction from the surface.

\vskip 4pt

To get a grip on the Koszul cohomology groups which according to (\ref{prymgrodd}) should vanish, we first use the functoriality of Koszul cohomology and write-down the following exact
sequence:

\begin{equation}\label{grseq}
\cdots \longrightarrow K_{p,q}(S,H)\longrightarrow K_{p,q}(C,H_C)\longrightarrow K_{p-1,q+1}(S,-C,H)\longrightarrow \cdots,
\end{equation}
where, we recall that the mixed Koszul cohomology group $K_{p-1,q+1}(S,-C,H)$ is computed by the following part of the Koszul complex:
$$\cdots \longrightarrow \bigwedge^p H^0(S,H)\otimes H^0(S,qH-C)\stackrel{d_{p,q}}\longrightarrow \bigwedge^{p-1} H^0(S,H)\otimes H^0(S, (q+1)H-C)\stackrel{d_{p-1,q+1}}\longrightarrow
$$
$$\stackrel{d_{p-1,q+1}}\longrightarrow \bigwedge^{p-2} H^0\bigl(S,H)\otimes H^0(X, (q+2)H-C)\longrightarrow \cdots.$$

 Using this sequence  for $(p,q)=(i+1,1)$ and $(p,q)=(i-1,2)$ respectively, in order to prove the Prym-Green Conjecture for $g=2i+5$, it suffices to show
\begin{equation}\label{van1}
K_{i+1,1}(S,H)=0 \  \mbox{ and } \ \ K_{i-1,2}(S,H)=0,
\end{equation}
respectively
\begin{equation}\label{van2}
K_{i,2}(S,-C,H)=0 \ \mbox{ and } \ K_{i-2,3}(S,-C,H)=0.
\end{equation}

Via the Lefschetz Hyperplane Principle for Koszul cohomology, vanishing (\ref{van1}) lies in the regime governed by the classical Green's Conjecture, which has been proved in \cite{AF1} for
(curves lying on) \emph{all} smooth $K3$ surfaces, in particular for Barth-Verra surfaces as well. Since $D$ is a curve of genus $g-2=2i+3$, if one shows  that the Clifford index of $D$ is maximal, that is,
$\mbox{Cliff}(D)=i+1$, then by Theorem \ref{hirr} it follows
$$K_{i-1,2}(D,\omega_D)\cong K_{i-1,2}(S,H)=0 \  \ \mbox{ and } \ \ K_{i+1,1}(S,H)\cong K_{i+1,1}(D,\omega_D)=0.$$
The fact that $\mbox{Cliff}(D)=i+1$ amounts to a simple lattice-theoretic caclulation in $\mbox{Pic}(S)$. Once this is carried out, we conclude that (\ref{van1}) holds.

\vskip 4pt

In order to prove (\ref{van2}), one  restricts the Koszul cohomology on the surface to a general curve $D\in |H|$ to obtain isomorphisms:
$$K_{i,2}(S,-C,H)\cong K_{i,2}(D, -C_D, \omega_D) \ \mbox{ and } \ K_{i-2,3}(S, -C,H)\cong K_{i-2,3}(D, -C_D, \omega_D).$$
Via the description of twisted Koszul cohomology in terms of Lazarsfeld bundles given in Proposition \ref{kernelbundles}, we  obtain the following isomorphisms:
$$K_{i,2}(D, -C_D,\omega_D)\cong H^0\Bigl(D, \bigwedge ^i M_{\omega_D}\otimes (2K_D-C_D)\Bigr) \ \ \ \ \mbox{ and } $$
$$K_{i-2,3}(D,-C_D, \omega_D)\cong H^1\Bigl(D, \bigwedge^{i-1} M_{\omega_D}\otimes (2K_D-C_D)\Bigr)\cong H^0\Bigl(D, \bigwedge ^{i-1} M_{\omega_D}^{\vee}\otimes (C_D-K_D)\Bigr)^{\vee}.$$
Although at first sight, these new cohomology groups look opaque, showing  that they vanish is easier to prove than the original Green's Conjecture. One specializes
the Barth-Verra surface further until both
$C$ and $D$ become hyperelliptic curves. Precisely, we specialize $S$ to a $K3$ surface having the following lattice, with respect to an ordered basis
$(L, \eta, E)$, where $\eta=H-L$.

\[ \left( \begin{array}{ccc}
4i+8 & 0 & 2 \\
0 & -4  &0 \\
2 & 0 & 0
\end{array} \right).\]

Note that $E$ is an elliptic pencil cutting out a divisor of degree $2$ both on a curve $C\in |L|$ and on a curve $D\in |H|$. The curve $D$ being hyperelliptic, the Lazarsfeld
bundle $M_{\omega_D}$ splits as a direct sum of degree $2$ line bundles. The vanishing (\ref{van2}) becomes
the statement that a certain line bundle of degree $\leq g(D)-1$ on $D$ has no sections, which can be easily verified. Full details can be found in \cite{FK1} and \cite{FK2}.

\vskip 4pt

\subsection{The failure of the Prym-Green Conjecture in genus 8.}  The paper \cite{CFVV} is dedicated to understanding the failure of the Prym-Green Conjecture in genus $8$.
We choose a general Prym-canonical curve of genus $8$
$$\varphi_{\omega_C\otimes \eta}:C\hookrightarrow \PP^6,$$
set $L:=\omega_C\otimes \eta$ and  denote $I_{C,L}(k):=\mbox{Ker}\bigl\{\mbox{Sym}^k H^0(C,L)\rightarrow H^0(C,L^{k})\bigr\}$ for all $k\geq 2$. Observe that $\mbox{dim } I_{C,L}(2)=7$ and $\mbox{dim } I_{C,L}(3)=49$, therefore the map
$$\mu_{C,L}: I_{C,L}(2)\otimes H^0(C,L)\rightarrow I_{C,L}(3)$$ globalizes to a morphism of vector bundles of the \emph{same rank} over the stack $\cR_{8,2}$. In \cite{CFVV}
we present \emph{three} proofs that $\mu_{C,L}$ is never an isomorphism (equivalently $K_{2,1}(C,L)\neq 0$). We believe that one of these approaches will generalize to
higher genus and offer hints into the exceptions to  the Prym-Green Conjecture in even genus. One of the approaches, links the non-vanishing of
$K_{2,1}(C,\omega_C\otimes \eta)$ to the existence of quartic hypersurfaces in $\PP^6$ vanishing doubly along the Prym-canonical curve $C$, but which are not in the image of the
map $$\mbox{Sym}^2 I_{C,L}(2)\longrightarrow I_{C,L}(4).$$

\end{document}